\newtheorem{cor*}{Corollary}
\newtheorem{prop*}{Proposition}
\newtheorem{thm*}{Theorem}
\newtheorem{theorem}{Theorem}[section]
\newtheorem{cor}{Corollary}[theorem]
\newtheorem{prop}[theorem]{Proposition}
\newtheorem{lem}[theorem]{Lemma}
\newtheorem{algo}[theorem]{Algorithm}
\theoremstyle{definition}
\newtheorem{rem}[theorem]{Remark}
\newcommand{\f}{\mathfrak{f}}
\newcommand{\h}{\mathfrak{h}}
\newcommand{\Z}{\mathbb{Z}}
\newcommand{\s}{\mathbb{S}}
\newcommand{\orb}{\mathcal{O}}
\newcommand{\N}{\mathcal{N}}
\newcommand{\I}{\mathcal{I}}
\newcommand{\wt}{\widetilde}
\DeclareMathOperator{\aut}{Aut}
\DeclareMathOperator{\stab}{Stab}
\DeclareMathOperator{\lmap}{LMod}
\DeclareMathOperator{\smap}{SMod}
\DeclareMathOperator{\map}{Mod}
\DeclareMathOperator{\symp}{Sp}
\DeclareMathOperator{\Sl}{SL}
\DeclareMathOperator{\homeo}{Homeo^{+}}
\begin{document}

\title[Liftable mapping class groups of regular abelian covers]{Liftable mapping class groups of\\ regular abelian covers}

\author{Neeraj K. Dhanwani}
\address{Department of Mathematical Sciences\\
Indian Institute of Science Education and Research Mohali\\
Knowledge city, Sector 81, SAS Nagar, Manauli PO 140306\\
Punjab, India}
\email{neerajk.dhanwani@gmail.com}

\author{Pankaj Kapari}
\address{Department of Mathematical Sciences\\
Indian Institute of Science Education and Research Mohali\\
Knowledge city, Sector 81, SAS Nagar, Manauli PO 140306\\
Punjab, India}
\email{pankajkapri02@gmail.com}
\urladdr{https://sites.google.com/view/pankajkapdi/home}

\author{Kashyap Rajeevsarathy}
\address{Department of Mathematics\\
Indian Institute of Science Education and Research Bhopal\\
Bhopal Bypass Road, Bhauri \\
Bhopal 462066, Madhya Pradesh\\
India}
\email{kashyap@iiserb.ac.in}
\urladdr{https://home.iiserb.ac.in/$_{\widetilde{\phantom{n}}}$kashyap/}

\author{Ravi Tomar}
\address{Department of Mathematical Sciences\\
Indian Institute of Science Education and Research
Berhampur\\
Laudigam, Village Road, Brahmapur\\
Odisha 760003, India}
\email{ravitomar547@gmail.com}

\subjclass[2020]{Primary 57K20, Secondary 57M60}

\keywords{hyperbolic surfaces, regular abelian covers, Birman-Hilden theory, liftable mapping class groups}

\begin{abstract}
Let $S_g$ be the closed oriented surface of genus $g \geq 0$, and let $\mathrm{Mod}(S_g)$ be the mapping class group of $S_g$. For $g\geq 2$, we develop an algorithm to obtain a finite generating set for the liftable mapping class group $\mathrm{LMod}_p(S_g)$ of a regular abelian cover $p$ of $S_g$. A key ingredient of our method is a result that provides a generating set of a group $G$ acting on a connected graph $X$ such that the quotient graph $X/G$ is finite. As an application of our algorithm, when $k$ is prime, we provide a finite generating set for $\mathrm{LMod}_{p_k}(S_2)$ for cyclic cover $p_k:S_{k+1}\to S_2$. Using the Birman-Hilden theory, when $k=2,3$ and $g=2$, we also obtain a finite generating set for the normalizer of the Deck transformation group of $p_k$ in $\mathrm{Mod}(S_{k+1})$. We conclude the paper with an application of our algorithm that gives a finite generating set for $\mathrm{LMod}_p(S_2)$, where $p:S_5\to S_2$ is a cover with deck transformation group isomorphic to $\mathbb{Z}_2\oplus \mathbb{Z}_2$.
\end{abstract}

\maketitle
\section{Introduction}
\label{sec:into}
For $g,n\geq 0$, let $S=S_{g,n}$ be the connected, closed, and oriented surface of genus $g$ with $n$ marked points (or punctures). (We will denote $S_{g,0}$ simply by $S_g$.) Let $\homeo(S)$ be the group of orientation-preserving self-homeomorphisms of $S$ that preserve the set of marked points equipped with the compact-open topology. The \textit{mapping class group} of $S$ is the group $\map(S):=\pi_0(\homeo(S))$, and the elements of $\map(S)$ are called \textit{mapping classes}. In this paper, we develop an algorithm (see Algorithm~\ref{algo}) to obtain a finite generating set for the liftable mapping class group of a regular free abelian cover of $S_g$ for $g \geq 2$. A key ingredient in our method is a result (see Theorem~\ref{thm:genset_gpaction_graph}) that provides a generating set of a group $G$ that acts on a connected graph $X$ having finitely many vertex and edge orbits. Roughly speaking, our result asserts that if $X/G$ is a finite graph, then $G$ is generated by a collection of stabilizer subgroups of $G$ of vertices in $X$ that is in correspondence with the vertices of $X/G$ and a carefully chosen set of elements of $G$ in correspondence with the edges of $X/G$. 

For $g\geq 2$, as a consequence of the Nielsen-Kerckhoff theorem~\cite[Theorem 5]{kerckhoff83}, a finite subgroup $H < \map(S_g)$ acts on $S_g$ via a group $\tilde{H}$ of isometries of some hyperbolic metric on $S_g$ such that $H \cong \tilde{H}$. The $\tilde{H}$-action on $S_g$ induces a finite-sheeted branched cover $p:S_g\to S_g/\tilde{H}$. The orbit space $S_g/\tilde{H}$ is a hyperbolic orbifold associated with $\tilde{H}$-action on $S_g$ and will be denoted by $\orb_H$. We have $\orb_H\approx S_{g_0,n}$ for some integers $g_0,n\geq 0$ which are genus and number of branched points of $\orb_H$, respectively. We define $\map(\orb_H):=\map(S_{g_0,n})$. Let $\lmap_p(\orb_H)$, known as the \textit{liftable mapping class group of $p$}, be the subgroup of $\map(\orb_H)$ consisting of mapping classes represented by homeomorphisms that lift under the cover $p$. We define $\smap_p(S_g)$, known as \textit{symmetric mapping class group of $p$}, as the subgroup of $\map(S_g)$ consisting of mapping classes represented by homeomorphisms that preserve the fibers of $p$.   

The covering $p:S_g\to S_g/\tilde{H}$ is said to satisfy the \textit{Birman-Hilden property} if two isotopic fiber-preserving homeomorphisms are isotopic through fiber-preserving homeomorphisms. By the work of Birman-Hilden~\cite[Theorem 1]{birman73} and Harvey-Maclachlan~\cite[Corollary 12]{harvey75}, it follows that any finite-sheeted regular branched cover of a hyperbolic surface has the Birman-Hilden property. It was shown in~\cite{birman73, harvey75} that $p:S_g\to S_g/\tilde{H}$ satisfies the Birman-Hilden property if and only if the following short-exact sequence exists:
\begin{equation}
\label{eqn:bh_short_exact}
1 \longrightarrow H \longrightarrow \smap_p(S_g) \xrightarrow{\varphi} \lmap_p(\orb_H) \longrightarrow 1.
\end{equation}
Furthermore, it was shown in~\cite[Theorem 4-5]{birman73} and~\cite[Theorem 10]{harvey75} that $\smap_p(S_g)$ is the normalizer $N_{\map(S_g)}(H)$ of $H$ in $\map(S_g)$. (We refer the interested reader to the survey article~\cite{margalit21} by Margalit-Winarski.) When $H$ is abelian, using the action of $\map(\orb_H)$ on the first integral homology group $H_1(\orb_H^{\circ};\Z)$, it can be seen that $\lmap_p(\orb_H)$ has finite index in $\map(\orb_H)$, where $\orb_H^{\circ}$ is the surface obtained by deleting the branched points from $\orb_H$. Since $\map(\orb_H)$ is finitely presented~\cite[Theorem 5.7]{primer}, $\lmap_p(\orb_H)$ and $N_{\map(S_g)}(H)$ are also finitely presented. Thus, given a finite abelian subgroup $H<\map(S_g)$, one way to derive a finite presentation of $N_{\map(S_g)}(H)$ is by lifting a finite presentation for $\lmap_p(\orb_H)$ under $\varphi$. 

By considering the cover $p :S_2 \to S_{0,6}$ induced by the hyperelliptic involution $\iota$ in $S_2$, Birman-Hilden~\cite[Theorem 8]{birman71} obtained the first known finite presentation of the centralizer $\map(S_2) = C_{\map(S_2)}(\iota)$ of $\iota$ by lifting a known presentation for $\map(S_{0,6}) = \lmap_p(S_{0,6})$. Recently, Ghaswala-Winarski classified~\cite[Theorem 1.3]{winarski17} cyclic covers $p:S_g\to S_{0,n}$ for which $\lmap_p(S_{0,n})= \map(S_{0,n})$. In another work, they obtained~\cite[Theorem 5.7]{ghaswala17} a finite presentation of the $\lmap_p(S_{0,n})$ associated with cover $p:S_g\to S_{0,n}$ induced by a balanced superelliptic map $F\in \map(S_g)$ for which $\lmap(S_{0,n}) \neq \map(S_{0,n})$. Subsequently, Hirose-Omori~\cite[Theorem 5.3]{hirose22} obtained a finite presentation of $N_{\map(S_g)}(F)$ by lifting the presentation of $\lmap_p(S_{0,n})$ under $\varphi$. Using an algebraic characterization of liftability due to Broughton~\cite[Theorem 3.2]{broughton92}, more recently, Kapari-Rajeevsarathy-Sanghi gave a method~\cite[Algorithm 4.3]{kapdi25} to write a finite generating set of $\lmap_p(S_{0,n})$ associated with cyclic covers $p:S_g\to S_{0,n}$. As an application, they obtained~\cite[Corollary 4.6.1]{kapdi25} a finite presentation for the normalizer and the centralizer of a reducible periodic mapping class of the highest order, which is $2g+2$ (due to Kasahara~\cite[Theorem 4.1]{kasahara}).

In the rest of the paper, we will only consider unbranched covers. Consider the $k$-sheeted cyclic cover $p_k:S_{g_k}\to S_g$ (see Figure \ref{fig:free_cover} for $k=3$ and $g=2$) induced by a free cyclic $\Z_k$-action on $S_{g_k}$, where $g_k=k(g-1)+1$.
\begin{figure}[ht]
\centering
\includegraphics[scale=1.2]{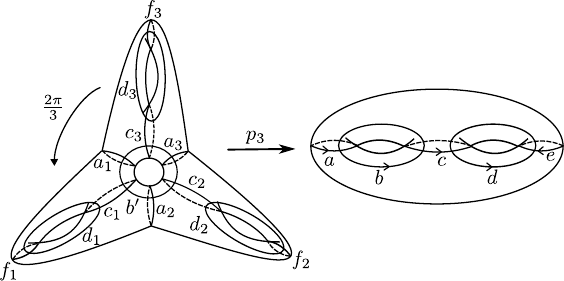}
\caption{A $3$-sheeted cover $S_4\to S_2$ induced by a free $2\pi/3$-rotation of $S_4$.}
\label{fig:free_cover}
\end{figure}
We denote $\lmap_{p_k}(S_g)$ by $\lmap_k(S_g)$. The action of $\map(S_g)$ on $H_1(S_g;\Z)$ induces the following short-exact sequence~\cite[Section 6.3]{primer}
\[
1\longrightarrow \I(S_g)\longrightarrow \map(S_g)\xlongrightarrow{\Psi} \symp(2g,\Z) \longrightarrow 1,
\]
where $\Psi$ is the \textit{symplectic representation} of $\map(S_g)$ and $\ker \Psi$ is the \textit{Torelli group} denoted by $\I(S_g)$. Using the symplectic characterization of $\lmap_k(S_g)$ due to Agarwal-Dey-Dhanwani-Rajeevsarathy~\cite[Theorem 2.2]{dhanwani22}, a finite generating set for $\Psi(\lmap_k(S_g))$ was obtained~\cite[Theorem 3.1]{dhanwani21}. For $g\geq 3$, due the work of Johnson~\cite{johnson83}, $\I(S_g)$ is finitely generated. For $g\geq 3$, Dey-Dhanwani-Patil-Rajeevsarathy~\cite[Theorem 3.9]{dhanwani21} derived a finite generating set for $\lmap_k(S_g)$ by combining generating sets of $\I(S_g)$ and that of $\Psi(\lmap_k(S_g))$. Due to the work of McCullough-Miller~\cite{mccullough86}, since $\mathcal{I}(S_2)$ is not finitely generated, this method does not work in the case of $g=2$.

Let $T_c$ denote the left-handed Dehn twist about a simple closed curve $c$ in $S$. Let $\Gamma_0(k)$ be the subgroup of $\Sl(2,\Z)$ defined as follows:
\[
\Gamma_0(k)=\left\{
\begin{pmatrix}
a & b \\ 
c & d
\end{pmatrix}\in \Sl(2,\Z)
\Big|~ c\equiv 0\pmod k 
\right\}.
\]
Let $\phi:\Sl(2,\Z)\to \symp(4,\Z)$ be the injective homomorphism defined as
\[
\phi:
\begin{pmatrix}
a & b \\ 
c & d
\end{pmatrix}
\mapsto
\begin{pmatrix}
a & b & 0 & 0 \\ 
c & d & 0 & 0 \\ 
0 & 0 & 1 & 0 \\ 
0 & 0 & 0 & 1
\end{pmatrix}.
\] 
Let $\s\subset\lmap_k(S_2)$ such that $\Psi(\s)$ is a finite generating set for $\phi(\Gamma_0(k))$. By applying our algorithm (Algorithm~\ref{algo}), we derive a finite generating set for $\lmap_k(S_2)$, where $k$ is prime (see Theorem~\ref{thm:genset_lmod}).

\begin{thm*}
\label{thm1}
For a prime number $k$ and the $k$-sheeted regular cyclic cover $p_k:S_{k+1}\to S_2$, we have $\lmap_k(S_2)=\langle \s\cup \{T_a,T_b^k,T_c,T_d,T_e,\iota\}\cup \s'\cup \s'' \rangle$, where $\s'=\{T_b^{1-j}T_aT_b^{1-\bar{j}}\mid 1\leq j<k,j\bar{j}\equiv 1\pmod k\}$ and $\s''=\{(T_bT_c)^6,T_b^iT_c^jT_d(T_bT_c)^6T_d^{-1}T_c^{-j}T_b^{-i}\mid 1\leq i,j<k\}$.
\end{thm*}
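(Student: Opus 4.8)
\noindent
The plan is to run Algorithm~\ref{algo}, i.e.\ to apply Theorem~\ref{thm:genset_gpaction_graph} to the action of $G:=\lmap_k(S_2)$ on a connected graph $X$ with finite quotient; for concreteness I would take $X$ to be (a subgraph of) the curve graph $\mathcal C(S_2)$, whose vertices are isotopy classes of essential simple closed curves, whose edges record disjointness, and which is connected. The cover $p_k$ is classified by a surjection $\chi\colon H_1(S_2;\Z)\to\Z_k$, and by Broughton's criterion (equivalently the symplectic characterization of liftability) a mapping class lies in $G$ exactly when its action on $H_1(S_2;\Z)$ preserves $\ker\chi$. Since the level-$k$ principal congruence subgroup of $\symp(4,\Z)$ satisfies this, $\Psi(G)$ has finite index in $\symp(4,\Z)$; as $\I(S_2)=\ker\Psi$ lies in $G$, it follows that $G$ has finite index in $\map(S_2)$. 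Consequently $G$ acts on $\mathcal C(S_2)$ with only finitely many vertex orbits and edge orbits, and each vertex stabilizer $\stab_G(v)=\stab_{\map(S_2)}(v)\cap G$ is of finite index in the finitely generated group $\stab_{\map(S_2)}(v)$, hence finitely generated. The hypothesis that $k$ is prime is what keeps this orbit count, and the combinatorics below, small enough to handle explicitly.

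Next I would check that each element of $\s\cup\{T_a,T_b^k,T_c,T_d,T_e,\iota\}\cup\s'\cup\s''$ really lies in $G$. For the chain twists $T_a,T_c,T_d,T_e$ and for $\iota$ (which acts by $-I$) this is a homological check against $\chi$; for $T_b^k$ it is the vanishing of $k\chi(b)$ modulo $k$; for the products in $\s'$ it follows from the congruence $j\bar j\equiv1\pmod k$, via a short computation in the copy of $\Sl(2,\Z)$ generated by $\Psi(T_a),\Psi(T_b)$ showing the relevant product lands in $\Gamma_0(k)$; for $\s$ it holds by construction; and the elements of $\s''$ are, by the $2$-chain relation $(T_bT_c)^6=T_\delta$ applied in a neighborhood of $\{b,c\}$ and its images, Dehn twists about separating curves of $S_2$, hence members of $\I(S_2)\subseteq G$.

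The substance of the proof is to make the conclusion of Theorem~\ref{thm:genset_gpaction_graph} explicit. For each vertex orbit representative $v$ I would start from the standard description of $\stab_{\map(S_2)}(v)$ as an extension of the mapping class group of the surface obtained by cutting $S_2$ along $v$ --- namely $S_{1,2}$ if $v$ is non-separating and $S_{1,1}\sqcup S_{1,1}$ if $v$ is separating --- by the Dehn twist(s) about the component(s) of $v$, and then intersect with $G$. Liftability turns this intersection into a congruence-subgroup computation: on the pieces of the cut surface where $\chi$ restricts trivially one retains the full mapping class group, generated by the liftable chain twists and $\iota$, while on the piece carrying the support of $\chi$ one is left with a conjugate of the preimage of $\Gamma_0(k)$ in $\Sl(2,\Z)$, whose generators are supplied by $\s$; the separating-curve Dehn twists surviving in the stabilizers of separating-curve vertices are precisely those in $\s''$. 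For each of the finitely many edge orbits I would then select a representative, determine the orbits of its two endpoints, and exhibit an explicit element of $G$ taking one endpoint to the chosen representative of the other orbit; matching these ``edge elements'' against the claimed list is where $\s'$ (lifts of coset representatives of $\Gamma_0(k)$ in $\Sl(2,\Z)$) and the conjugating words $T_b^iT_c^jT_d$ appearing in $\s''$ enter.

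By Theorem~\ref{thm:genset_gpaction_graph}, $G$ is then generated by the union of the vertex-stabilizer generating sets with these edge elements, and a final Tietze-style reduction --- using the chain relation, the $2$-chain relation, and the identities among the $T_b^j$-conjugates forced by $j\bar j\equiv1\pmod k$ --- rewrites this generating set in the asserted form. The main obstacle I foresee is not any isolated estimate but exactly this explicitness: although finite generation of every $\stab_G(v)$ is automatic from finite index, producing a \emph{clean} finite generating set requires pinning down the $G$-orbit structure of $\mathcal C(S_2)$ and computing the stabilizers and the edge elements finely enough to certify that no separating-curve twist beyond those in $\s''$, and no symplectic generator beyond $\s\cup\s'$, is needed; the homological membership checks and the bookkeeping of edge elements should then be routine.
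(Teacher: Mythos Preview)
Your overall strategy---apply Theorem~\ref{thm:genset_gpaction_graph} to the $\lmap_k(S_2)$-action on a curve graph---matches the paper, and your identification of $\s'$ as edge elements and $\s''$ as Torelli elements is on the right track. But two concrete choices diverge from the paper in ways that matter for executing the argument.

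First, the graph. The paper does not use the disjointness curve graph $\mathcal C(S_2)$ but the graph $\N(S_2)$ whose vertices are \emph{nonseparating} curves and whose edges join curves with geometric intersection number~$1$. With this choice the quotient $\overline{\N_k(S_2)}$ has exactly three vertices $\overline{e_1},\overline{e_2},\overline{e_3}$ (Lemma~\ref{lem:vertices_modk_graph}), a two-edge maximal tree, and $2(k-1)+1$ loops (Lemma~\ref{lem:edhes_modk_graph}); the elements $T_b^{1-j}T_aT_b^{1-\bar j}$ of $\s'$ are precisely the loop elements at~$\overline{e_2}$. Your disjointness graph includes separating curves as vertices and has a different orbit structure; it is not clear it would deliver the same clean list without substantial extra work.

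Second, and more seriously, you underestimate the stabilizer computation. Finite index indeed gives abstract finite generation of $\stab_G(v)$, but the explicit generators are the crux. The paper splits each stabilizer via
\[
1\to\stab_{\I(S_2)}(\gamma)\to\stab_{\lmap_k(S_2)}(\gamma)\to\stab_{\Psi(\lmap_k(S_2))}(\pm[\gamma])\to1,
\]
handles the symplectic quotient by direct matrix reduction (Lemmas~\ref{lem:stab_e_matrices}--\ref{lem:stab_b_matrices}), and for the Torelli kernel invokes Putman's theorem (Theorem~\ref{thm:stab_genset}) that $\stab_{\I(S_2)}(\gamma)\cong[\pi,\pi]$ with $\pi$ free of rank~$2$, together with Tomaszewski's infinite generating set $\{\tilde a^m\tilde b^n[\tilde a,\tilde b]\tilde b^{-n}\tilde a^{-m}:m,n\in\Z\}$. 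The elements of $\s''$ emerge from \emph{these} infinitely many Torelli generators, not from stabilizers of separating-curve vertices (there are none in $\N(S_2)$); the longest part of the proof of Theorem~\ref{thm:genset_lmod} is the braid-relation bookkeeping that absorbs the infinite parameters $m,n$ into the already-chosen generators $T_a,T_c,T_d,T_b^k$ and $\s'$, leaving only the residues in $\s''$. Your outline has no mechanism for this step, and without the Putman/Tomaszewski input one cannot certify that $\s''$ is enough.
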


As a corollary of Theorem~\ref{thm1}, we recover (see Corollary~\ref{cor:genset_lmod2}) the finite generating set for $\lmap_2(S_2)$ obtained in~\cite[Corollary 5.5]{dhanwani21} and also derive an explicit finite generating set for $\lmap_3(S_2)$ (see Corollary~\ref{cor:genset_lmod3}).

\begin{cor*}
\label{cor1}
We have $\lmap_2(S_2)=\langle T_a,T_b^2,T_c,T_d,T_e\rangle$ and $\lmap_3(S_2)=\langle T_a,T_b^3,T_c,T_d,T_e,\iota\rangle$.
\end{cor*}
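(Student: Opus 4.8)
The plan is to specialise Theorem~\ref{thm1} to the primes $k=2$ and $k=3$ and then prune the resulting generating set. The key point is that for these two values the sets $\s'$ and $\s''$ collapse almost completely. For $k=2$ the only admissible index in $\s'$ is $j=\bar j=1$, so $\s'=\{T_a\}$, and the only admissible pair in $\s''$ is $(i,j)=(1,1)$, so $\s''=\{(T_bT_c)^6,\ T_bT_cT_d(T_bT_c)^6T_d^{-1}T_c^{-1}T_b^{-1}\}$; for $k=3$ one gets $\s'=\{T_a,\ T_b^{-1}T_aT_b^{-1}\}$ and $\s''=\{(T_bT_c)^6\}\cup\{T_b^iT_c^jT_d(T_bT_c)^6T_d^{-1}T_c^{-j}T_b^{-i}\mid 1\le i,j\le 2\}$. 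Thus, once a concrete choice of $\s$ is fixed, the statement reduces to checking that these finitely many explicit elements — together with $\iota$ when $k=2$ — lie in $H_2:=\langle T_a,T_b^2,T_c,T_d,T_e\rangle$, respectively in $H_3:=\langle T_a,T_b^3,T_c,T_d,T_e,\iota\rangle$; the reverse inclusions are immediate from Theorem~\ref{thm1}.

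First I would fix $\s$. With the symplectic basis coming from the chain $a,b,c,d,e$, the twists $T_a$ and $T_b^k$ map under $\Psi$ into $\phi(\Sl(2,\Z))$, namely to the two parabolics $\left(\begin{smallmatrix}1&1\\0&1\end{smallmatrix}\right)$ and $\left(\begin{smallmatrix}1&0\\k&1\end{smallmatrix}\right)$ of $\Gamma_0(k)$ (up to the fixed sign conventions). For $k=2$ these two parabolics already generate $\Gamma_0(2)$ — in particular $-I_2$ lies in the subgroup they generate — so one may take $\s\subseteq\{T_a,T_b^2\}\subseteq H_2$. For $k=3$ the same two parabolics generate an index-two subgroup of $\Gamma_0(3)$, so one further needs a preimage of $\phi(-I_2)$; but $\phi(-I_2)=\Psi\bigl(\iota(T_dT_e)^3\bigr)$, since $\Psi(\iota)=-I_4$ and $\Psi\bigl((T_dT_e)^3\bigr)$ is $-I$ on the second handle and the identity on the first, so one may take $\s\subseteq\{T_a,T_b^3,\iota(T_dT_e)^3\}\subseteq H_3$. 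This step is a finite computation in $\Sl(2,\Z)$ together with the explicit form of the symplectic representation of $\map(S_2)$.

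The heart of the argument, and the step I expect to be the main obstacle, is the disposal of $\s''$. By the $2$-chain relation, $(T_bT_c)^6=T_\delta$, where $\delta$ bounds a regular neighbourhood of $b\cup c$; hence $\delta$ is separating and disjoint from both $b$ and $c$, so $[T_b,T_\delta]=[T_c,T_\delta]=1$. Expanding $T_b^iT_c^jT_d(T_bT_c)^6T_d^{-1}T_c^{-j}T_b^{-i}=T_{\delta_{i,j}}$ with $\delta_{i,j}=T_b^iT_c^jT_d(\delta)$ exhibits each element of $\s''$ as a Dehn twist about a separating curve in the $\map(S_2)$-orbit of $\delta$. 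What remains is to verify, case by case for the finitely many relevant pairs $(i,j)$, that $T_{\delta_{i,j}}\in H_k$: one realises $\delta_{i,j}$ as the boundary of a genus-one subsurface of $S_2$, applies the $2$-chain relation to a suitable chain of two curves inside it, and then eliminates every power of $T_b$ that is not a multiple of $k$ using the braid relations $T_aT_bT_a=T_bT_aT_b$, $T_bT_cT_b=T_cT_bT_c$, the commutations above, and (when $k=2$) the hyperelliptic relation. This is genuinely delicate because one must keep track of the honest isotopy classes of the curves $\delta_{i,j}$ on $S_2$, not merely their homology classes, and the bookkeeping is heaviest here.

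Finally, for $k=2$ I would exhibit $\iota$ itself as an explicit word in $T_a,T_b^2,T_c,T_d,T_e$: starting from $\iota=(T_aT_bT_cT_dT_e)^3$ and using the hyperelliptic relation $(T_aT_bT_cT_dT_e)^6=1$ together with the chain relations, one rearranges so that $T_b$ occurs only with even total exponent, which can then be collected into powers of $T_b^2$ — this is precisely the content of \cite[Corollary 5.5]{dhanwani21}, which we are recovering. For $k=3$ the remaining element $T_b^{-1}T_aT_b^{-1}$ of $\s'$ is removed by the same kind of braid-relation manipulation, at the cost of introducing $T_c$ and $\iota$ but leaving only $T_b^3$. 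Assembling the inclusions of the previous three paragraphs yields $\langle\s\cup\{T_a,T_b^k,T_c,T_d,T_e,\iota\}\cup\s'\cup\s''\rangle\subseteq H_2$ for $k=2$ and $\subseteq H_3$ for $k=3$; combined with Theorem~\ref{thm1} this gives $\lmap_2(S_2)=H_2$ and $\lmap_3(S_2)=H_3$, as claimed.
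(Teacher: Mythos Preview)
Your proposal is correct and follows essentially the same route as the paper: specialise Theorem~\ref{thm1}, fix $\s$ via generators of $\Gamma_0(k)$ (for $k=3$ the paper takes the extra generator to be $(T_aT_b)^3=\iota(T_dT_e)^3$, equivalent to your $\iota(T_dT_e)^3$), and then absorb the finitely many elements of $\s'\cup\s''\cup\{\iota\}$ into $H_k$ by braid and chain relations. The paper carries out precisely the computations you flag as the main obstacle, the key identities being $(T_bT_c)^6=(T_b^2T_c)^4$ for $k=2$, $(T_bT_c)^6=(T_b^3T_c)^3$ for $k=3$, $T_bT_cT_d(T_bT_c)^6(T_bT_cT_d)^{-1}=(T_cT_d)^6$, and $\iota=T_eT_dT_c(T_a^{-1}T_b^2T_a)T_b^2T_cT_dT_e$.
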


Let $F_{g,k}\in \map(S_{g_k})$ be a periodic mapping class of order $k$ inducing the cover $p_k$. For $g\geq 3$, an explicit finite generating set for $N_{\map(S_{g_2})}(F_{g,2})$ was obtained in~\cite[Corollary 3.10]{dhanwani21} by lifting the finite generating set of $\lmap_2(S_g)$ under $\varphi:N_{\map(S_{g_2})}(F_{g,2})\to \lmap_2(S_g)$. By lifting the generating set described in Corollary~\ref{cor1}, we have also obtained a similar result (see Corollary~\ref{cor:genset_norm23}).

\begin{figure}[ht]
\centering
\includegraphics[scale=0.4]{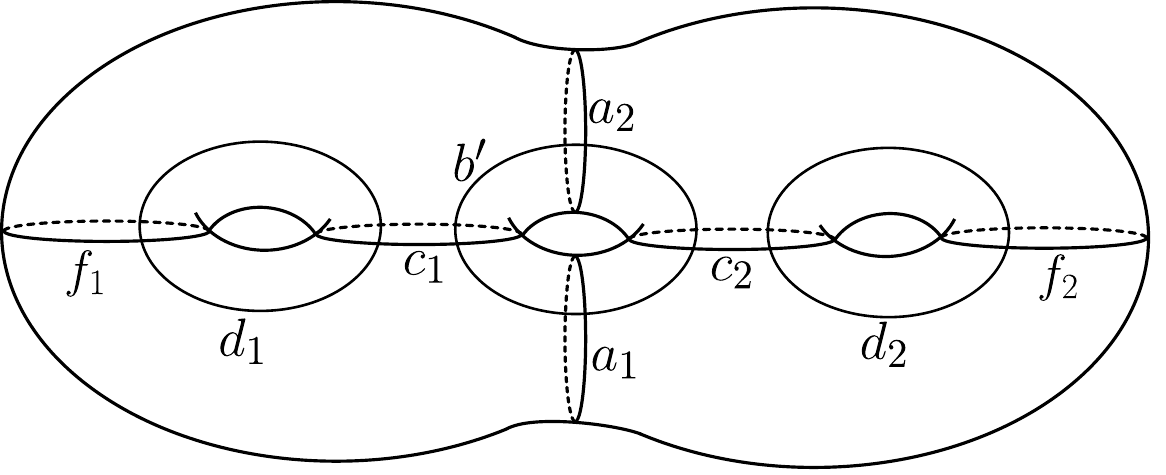}
\caption{The simple closed curves on $S_3$ used to describe a generating set for $N_{\map(S_3)}(F_{2,2})$.}
\label{s3_curves}
\end{figure} 

\begin{cor*}
\label{cor2}
We have
$$N_{\map(S_3)}(F_{2,2})=\langle F_{2,2}, T_{a_1}T_{a_2}, T_{b'}, T_{c_1}T_{c_2}, T_{d_1}T_{d_2},T_{f_1}T_{f_2}\rangle$$
and
$$N_{\map(S_4)}(F_{2,3})=\langle F_{2,3}, T_{a_1}T_{a_2}T_{a_3}, T_{b'}, T_{c_1}T_{c_2}T_{c_3}, T_{d_1}T_{d_2}T_{d_3},T_{f_1}T_{f_2}T_{f_3},\iota_1\iota_2\iota_3\rangle$$
for simple closed curves shown in Figure~\ref{fig:free_cover}-\ref{s3_curves} and $\iota_1,\iota_2,\iota_3\in \map(S_4)$ are involutions represented by $\pi$-rotation of $S_4$ as shown in Figure~\ref{fig:free_cover}. 
\end{cor*}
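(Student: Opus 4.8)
The plan is to deduce the statement from the generating sets of $\lmap_2(S_2)$ and $\lmap_3(S_2)$ recorded in Corollary~\ref{cor1}, via the Birman--Hilden machinery of~\eqref{eqn:bh_short_exact}. Since $S_2$ is hyperbolic and $p_k\colon S_{k+1}\to S_2$ is a finite regular unbranched cover with deck transformation group $H=\langle F_{2,k}\rangle\cong\Z_k$, it has the Birman--Hilden property, so~\eqref{eqn:bh_short_exact} specializes to the short exact sequence
\[
1\longrightarrow \langle F_{2,k}\rangle \longrightarrow N_{\map(S_{k+1})}(F_{2,k}) \xrightarrow{\ \varphi\ } \lmap_k(S_2)\longrightarrow 1,
\]
where we used that $\smap_{p_k}(S_{k+1})=N_{\map(S_{k+1})}(F_{2,k})$ is the normalizer of the deck group. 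Consequently, if $X$ is a finite generating set of $\lmap_k(S_2)$ and $\widetilde X$ is obtained by choosing one $\varphi$-preimage of each element of $X$, then $\widetilde X\cup\{F_{2,k}\}$ generates $N_{\map(S_{k+1})}(F_{2,k})$, because it surjects onto $\lmap_k(S_2)$ and contains the kernel $\langle F_{2,k}\rangle$. It therefore suffices to exhibit an explicit $\varphi$-preimage of each generator appearing in Corollary~\ref{cor1}.

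Here I would use two standard consequences of the Birman--Hilden correspondence. First, if a simple closed curve $c$ in $S_2$ lies in the kernel of the surjection $\pi_1(S_2)\twoheadrightarrow\Z_k$ defining $p_k$, then $p_k^{-1}(c)=c_1\sqcup\cdots\sqcup c_k$ is a disjoint union of $k$ copies of $c$ cyclically permuted by $F_{2,k}$; the product $T_{c_1}\cdots T_{c_k}$ is then fiber-preserving and $\varphi(T_{c_1}\cdots T_{c_k})=T_c$. Applied to the curves $a,c,d,e$ of $S_2$, whose preimages $a_i$, $c_i$, $d_i$, $f_i$ ($i=1,\dots,k$) are displayed in Figures~\ref{fig:free_cover}--\ref{s3_curves}, this produces $T_{a_1}\cdots T_{a_k}$, $T_{c_1}\cdots T_{c_k}$, $T_{d_1}\cdots T_{d_k}$, $T_{f_1}\cdots T_{f_k}$ as lifts of $T_a$, $T_c$, $T_d$, $T_e$. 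Second, since $k$ is prime and the covering homomorphism sends $[b]$ to a nonzero (hence generating) element of $\Z_k$, the preimage $p_k^{-1}(b)$ is a single simple closed curve $b'$ covering $b$ with degree $k$; then $T_{b'}$ is fiber-preserving and, by the local model on an annular neighborhood of $b$, $\varphi(T_{b'})=T_b^{k}$, so $T_{b'}$ lifts $T_b^{k}$. Finally, for $k=3$ the three $\pi$-rotations $\iota_1,\iota_2,\iota_3$ of $S_4$ of Figure~\ref{fig:free_cover} each normalize $\langle F_{2,3}\rangle$ and descend to the same involution $\iota$ of $S_2$ (they are conjugate by $F_{2,3}$, which descends to the identity), so $\iota_1\iota_2\iota_3\in N_{\map(S_4)}(F_{2,3})$ and $\varphi(\iota_1\iota_2\iota_3)=\iota^3=\iota$. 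Substituting these preimages into $\widetilde X\cup\{F_{2,k}\}$ yields precisely the two generating sets asserted.

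The load-bearing step is the bookkeeping in the previous paragraph: identifying which of the curves $a,b,c,d,e$ of $S_2$ lie in $\ker(\pi_1(S_2)\twoheadrightarrow\Z_k)$, i.e.\ whether their preimages split into $k$ components or remain connected, and checking that $\varphi$ carries the displayed products to the generators of Corollary~\ref{cor1} exactly, not merely up to the ambiguity $\langle F_{2,k}\rangle$. The first point is read off from the explicit covering homomorphism used to establish Corollary~\ref{cor1}: the curves $a,c,d,e$ are precisely the ones on which this homomorphism vanishes (consistent with $T_a,T_c,T_d,T_e\in\lmap_k(S_2)$), while $b$ is the one with nontrivial image (consistent with $T_b\notin\lmap_k(S_2)$ but $T_b^{k}\in\lmap_k(S_2)$). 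The second point follows from the definition of $\varphi$ --- pushing a fiber-preserving homeomorphism down to $S_2$ --- together with the elementary fact that a Dehn twist about a connected $k$-fold cover of an annular neighborhood of $b$ descends to $T_b^{k}$. Beyond this verification I expect no real obstacle, since $\{F_{2,k}\}$ already supplies the kernel and exactness of the Birman--Hilden sequence does the rest.
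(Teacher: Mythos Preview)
Your proposal is correct and follows exactly the approach the paper intends. The paper gives no written proof of this corollary beyond the sentence ``By lifting the generating set described in Corollaries~\ref{cor:genset_lmod2}--\ref{cor:genset_lmod3}, we get the following result,'' so your argument via the Birman--Hilden sequence~\eqref{eqn:bh_short_exact}, together with the identification of explicit $\varphi$-preimages of $T_a,T_b^k,T_c,T_d,T_e,\iota$, is precisely the intended justification and in fact supplies more detail than the paper does.

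One small remark on the lift of $\iota$: your computation $\varphi(\iota_1\iota_2\iota_3)=\iota^3=\iota$ presupposes that each $\iota_j$ is individually fiber-preserving and projects to $\iota$. This is consistent with the picture in Figure~\ref{fig:free_cover} (each $\iota_j$ is a global $\pi$-rotation about a horizontal axis, hence conjugates $F_{2,3}$ to $F_{2,3}^{-1}$ and so lies in the normalizer), and the three are conjugate by powers of $F_{2,3}$, so they share the same image under $\varphi$. Either way the conclusion $\varphi(\iota_1\iota_2\iota_3)=\iota$ stands, and the rest of your bookkeeping---that $a,c,d,e$ lie in the kernel of the covering homomorphism while $b$ does not, so that $p_k^{-1}(b)=b'$ is connected and $T_{b'}$ descends to $T_b^k$---matches the symplectic criterion (Proposition~\ref{prop:sym_mat_s2}) used throughout the paper.
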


We conclude this paper by deriving a generating set of $\lmap_p(S_2)$ of an abelian cover \linebreak $p:S_5\to S_2$ with Deck group $\Z_2\oplus \Z_2=\langle s\mid s^2=1\rangle \oplus \langle t\mid t^2=1\rangle$ associated with the map $H_1(S_2;\Z)\cong \Z^4\to \Z_2\oplus \Z_2$ given by $(e_1,e_2,e_3,e_4)\mapsto (s,t,0,0)$ (see Theorem~\ref{thm:genset_klein_lmod}).

\begin{thm*}
\label{thm2}
For the cover $p$ described as above, we have $\lmap_p(S_2)=\langle T_a,T_b,T_c^2,T_d,T_e\rangle$.
\end{thm*}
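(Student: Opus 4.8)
The plan is to apply Algorithm~\ref{algo} (equivalently, Theorem~\ref{thm:genset_gpaction_graph}) to the cover $p:S_5\to S_2$ with Deck group $\Z_2\oplus\Z_2$, and then simplify the resulting generating set down to $\{T_a,T_b,T_c^2,T_d,T_e\}$. First I would set up the combinatorial data: using Broughton's algebraic liftability criterion~\cite{broughton92} together with the symplectic representation $\Psi:\map(S_2)\to\symp(4,\Z)$, I would identify $\Psi(\lmap_p(S_2))$ as the stabilizer in $\symp(4,\Z)$ of the surjection $H_1(S_2;\Z)\to\Z_2\oplus\Z_2$, $(e_1,e_2,e_3,e_4)\mapsto(s,t,0,0)$. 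This is a finite-index subgroup of $\symp(4,\Z)$, so one can write an explicit finite generating set for it; the curves $a,b,c,d,e$ in $S_2$ should be chosen so that $T_a,T_b,T_c^2,T_d,T_e$ map onto such a generating set (the power $T_c^2$ appears precisely because $T_c$ itself fails the liftability condition on the $c$-coordinate modulo $2$, but its square lifts).

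Next I would invoke the action of $\lmap_p(S_2)$ on an appropriate connected graph $X$ with finite quotient $X/G$ — for instance a curve-complex-type graph or the graph implicit in Algorithm~\ref{algo} built from the coset structure — so that Theorem~\ref{thm:genset_gpaction_graph} expresses $\lmap_p(S_2)$ in terms of (i) the vertex stabilizers, which here are controlled by the Torelli group $\I(S_2)$ together with finite data, and (ii) a finite set of edge-crossing elements matching the edges of $X/G$. Since $\I(S_2)$ is \emph{not} finitely generated, the key point — exactly as in the analysis leading to Theorem~\ref{thm1} — is that the relevant vertex stabilizers are nonetheless finitely generated once one passes to the liftable subgroup, because the Torelli-type contributions that actually arise are generated by (powers of) the Dehn twists already on our list. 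I would carry out this reduction concretely for the $\Z_2\oplus\Z_2$ cover, tracking each generator produced by the algorithm and rewriting it as a word in $T_a,T_b,T_c^2,T_d,T_e$ using standard lantern, chain, and commutation relations in $\map(S_2)$.

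The main obstacle I expect is the last step: showing that the a priori larger generating set output by Algorithm~\ref{algo} — which will include several edge-path elements and a spanning set for the finitely many vertex stabilizers — collapses to just these five elements. Concretely, one must verify that every such element lies in $\langle T_a,T_b,T_c^2,T_d,T_e\rangle$, which amounts to a finite but intricate sequence of mapping class relation manipulations in $S_2$, and dually that these five twists do generate something no larger than $\lmap_p(S_2)$ (the latter being the easy direction, since each of $T_a,T_b,T_c^2,T_d,T_e$ manifestly lifts by the liftability criterion). A secondary technical point is checking connectivity and finiteness of the quotient graph $X/G$ in this specific case, but that should follow from the same homological finite-index argument used throughout the paper for abelian covers.
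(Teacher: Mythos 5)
Your strategy is the same as the paper's --- Step-by-step execution of Algorithm~\ref{algo} for this $\Z_2\oplus\Z_2$ cover, with Theorem~\ref{thm:genset_gpaction_graph} supplying the structure of the argument --- but as written the proposal is a plan rather than a proof: essentially all of the case-specific content is deferred. What the paper actually has to do, and what your sketch leaves open, is (a) compute the quotient graph $\overline{\N_2(S_2)}$ explicitly (three vertices $\overline{e_1},\overline{e_3},\overline{e_1-e_3}$, one loop at each of the first two, two loops at the third, and two non-loop edges forming the maximal tree), (b) choose curve and edge representatives and exhibit the edge-crossing elements ($T_aT_b$, $T_dT_e$, $T_b$, $T_d$), (c) compute the three vertex stabilizers by splicing $\stab_{\I(S_2)}(\gamma)$ (via Putman's $[\pi,\pi]$ description and Tomaszewski's generating set) with $\stab_{\Psi(\lmap_p(S_2))}(\pm[\gamma])$ (via explicit matrix reductions), and (d) show by concrete relation manipulations that the resulting generators --- e.g.\ $\iota$, $T_c^{-1}(T_aT_b)^6T_c$, and $T_c^{-1}T_bT_d^{-1}T_c(T_aT_b)^6T_c^{-1}T_dT_b^{-1}T_c$ --- lie in $\langle T_a,T_b,T_c^2,T_d,T_e\rangle$, using identities such as $(T_aT_b)^6=(T_dT_e)^6$, $\iota=(T_aT_b)^3(T_dT_e)^{-3}$, and $T_c^{-1}(T_aT_b)^6T_c=(T_aT_bT_c^2T_b^{-1})^4$. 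You name this last step as the ``main obstacle'' but do not carry it out, so the proof is not there.

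One substantive inaccuracy to flag: you attribute the finite generation of the vertex stabilizers to the claim that ``the Torelli-type contributions that actually arise are generated by (powers of) the Dehn twists already on our list.'' That is not how it works, and it is not true as stated: $\stab_{\I(S_2)}(\gamma)\cong[\pi,\pi]$ is infinitely generated, and its generators are conjugates of $(T_aT_b)^6$-type elements, not powers of the five listed twists. Finite generation of $\stab_{\lmap_p(S_2)}(\gamma)$ comes from its finite index in the (finitely generated) full stabilizer $\stab_{\map(S_2)}(\gamma)$; the actual collapse of the infinite Torelli families to finitely many elements happens only after combining them with the generators of $\stab_{\Psi(\lmap_p(S_2))}(\pm[\gamma])$ (e.g.\ $T_c^2$ and the lifts of $M^2$, $(M')^2$, $(N')^2$) and performing the conjugation identities above --- this is precisely the computation your proposal omits.
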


\section{Main algorithm}
\label{sec:prelim}
\subsection{Generating set of a group acting on a graph}
\label{subsec:genset_gpaction_graph}
Let $G$ be a group that acts on a connected graph $X$ such that the sets of orbits of vertices and edges are finite. Let $Y$ denote the quotient graph of $X$ under the action of $G$. Since $X$ is connected, $Y$ is a connected finite graph.

\begin{theorem}
\label{thm:genset_gpaction_graph}
Let $T$ be a maximal tree in $Y$ and choose a lift $\widetilde{T}$ of $T$ in $X$. Let $v_1,v_2,\dots,v_n$ be the vertices of $T$ and $\wt{v}_1,\wt{v}_2,\dots,\wt{v}_n$ be the lifts of $v_i$'s in $\wt{T}$, respectively. Assume that the following conditions hold:
\begin{enumerate}[$(i)$]
\item Suppose, for some $1\leq i\leq n$, there exists $k_i\geq 1$ such that there are $k_i$-loops at the vertex $v_i$, and each of these loops has a lift in $X$ at $\wt{v}_i$ that is not a loop. For $1\leq j\leq k_i$, let $\{\xi_j^i\}$ be the set of loops at the vertex $v_i$. Denote by $\wt{\xi}_j^i$ a lift of $\xi_j^i$ at the vertex $\wt{v_i}$ that is not a loop. For each $i$ and $1\leq j\leq k_i$, suppose there exists $\f_j^i\in G$ such that $\f_j^i\wt{v}_i$ is the other vertex of $\wt{\xi}_j^i$ (if there is no such loop at $v_i$ for some $i$, then there is no $\f_j^i$).
			
\item Suppose, for some $l\geq 1$, there exist edges $\xi_1,\xi_2,\dots,\xi_l$ of $Y$ which are neither loops nor part of $T$. For $1\leq m\leq l$, let $\wt{\xi}_m$ be a lift of $\xi_m$ in $X$, where exactly one vertex of  each $\wt{\xi}_m$ belongs to the set of vertices of $\wt{T}$. Suppose the set of vertices of $\xi_m$ is $\{v_i,v_j\}$, and the set of vertices of $\wt{\xi_m}$ is $\{\wt{v_i},\wt{w}\}$. Then, there exists $\f_m\in G$ such that $\f_m\wt{w}=\wt{v_j}$ (if there is no edge outside $T$ in $Y$, then there is no $\f_m$). 
\end{enumerate}
For $1\leq i\leq n$, let $G_i$ be the $G$-stabilizer of the vertex $\wt{v_i}$. Then,
\[
G=\left\langle \bigcup_{i=1}^n(G_i\cup\left\{\f_j^i:1\leq j\leq k_i\right\})\bigcup \left\{\f_m:1\leq m\leq l\right\} \right\rangle
\]
In particular, if $G_i$'s are finitely generated, then $G$ is also finitely generated.
\end{theorem}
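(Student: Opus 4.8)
The plan is to run the standard ``fundamental domain'' argument from Bass--Serre theory, repackaged to match the hypotheses $(i)$ and $(ii)$. Write $H$ for the subgroup of $G$ appearing on the right-hand side of the asserted equality; clearly $H\le G$, so the whole content is to prove $H=G$.

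First I would assemble a convenient connected ``quasi-fundamental domain'' $Z\subset X$. Start with $\widetilde{T}$, and for every edge $\xi$ of $Y$ that is not in $T$, adjoin to $\widetilde{T}$ a chosen lift of $\xi$ together with its endpoints: for a loop $\xi_j^i$ take the non-loop lift $\widetilde{\xi}_j^i$ at $\widetilde{v}_i$, whose other endpoint is $\f_j^i\widetilde{v}_i$; for a non-loop edge $\xi_m$ take the lift $\widetilde{\xi}_m$ whose tree-endpoint is $\widetilde{v}_i$ and whose other endpoint $\widetilde{w}$ satisfies $\f_m\widetilde{w}=\widetilde{v}_j$ (and for any loop all of whose lifts at $\widetilde{v}_i$ are again loops, simply adjoin one such loop-lift, which adds no new vertex and contributes no group element). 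Since $T$ is a spanning tree of $Y$, the map $Z\to Y$ is onto both vertices and edges; $Z$ is connected because each adjoined edge dangles off $\widetilde{T}$; and — this is the key bookkeeping — \emph{every vertex of $Z$ is an $H$-translate of some $\widetilde{v}_i$}. This is clear for the vertices of $\widetilde{T}$, and for the new vertices it is exactly the content of $(i)$ and $(ii)$, since $\f_j^i\widetilde{v}_i$ and $\widetilde{w}=\f_m^{-1}\widetilde{v}_j$ have $\f_j^i,\f_m^{-1}\in H$.

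Next I would show $X':=H\cdot Z$ is a connected subgraph of $X$. Each generator of $H$ carries $Z$ onto a translate meeting $Z$ in a vertex: an element of $G_i$ fixes $\widetilde{v}_i\in Z$; the element $\f_j^i$ sends $\widetilde{v}_i\in Z$ to $\f_j^i\widetilde{v}_i\in Z$; and $\f_m$ sends $\widetilde{w}\in Z$ to $\widetilde{v}_j\in Z$. Hence $sZ\cup Z$ is connected for every generator $s$ and its inverse, and an induction on word length shows $hZ\cup Z$ is connected for every $h\in H$, so $X'=\bigcup_{h\in H}hZ$ is connected. Then comes the main step, $X'=X$: if not, connectedness of $X$ produces an edge $e$ of $X$ with initial vertex in $X'$ but with $e\notin X'$. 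Since every vertex of $X'$ is an $H$-translate of some $\widetilde{v}_i$, after applying a suitable $h\in H$ we may assume $h^{-1}e$ has initial vertex $\widetilde{v}_i$, so $h^{-1}e$ is a lift of the edge $p(e)$ of $Y$ based at $\widetilde{v}_i$. But all lifts of a fixed edge of $Y$ with initial vertex $\widetilde{v}_i$ form a single orbit under $G_i=\stab_G(\widetilde{v}_i)$ of the lift already placed in $Z$; hence $h^{-1}e=g_0\widehat{e}$ for some $g_0\in G_i\le H$ and some edge $\widehat{e}$ of $Z$, so $e=hg_0\widehat{e}\in H\cdot Z=X'$, a contradiction. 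Thus $X'=X$. Finally, for any $g\in G$ the vertex $g\widetilde{v}_1$ lies in $X'$, hence equals $h\widetilde{v}_1$ for some $h\in H$ (projecting to $Y$ forces the label to be $v_1$), whence $h^{-1}g\in G_1\le H$ and $g\in H$; therefore $G=H$. The ``in particular'' clause is then immediate, since $Y$ is finite there are only finitely many $\f_j^i$ and $\f_m$, so finitely generated $G_i$'s yield a finite generating set of $G$.

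I expect the main obstacle to be the bookkeeping in the two places where the hypotheses are actually consumed: constructing $Z$ so that the terminal vertex of every chosen edge-lift is visibly in $H\cdot\{\widetilde{v}_1,\dots,\widetilde{v}_n\}$ (this is precisely what the elements $\f_j^i$ and $\f_m$ are for), and, in the edge step, justifying that every lift of a given edge of $Y$ at a fixed vertex $\widetilde{v}_i$ is a $G_i$-translate of the one chosen in $Z$ (here one should be mindful of graph conventions, e.g.\ working without inversions or passing to the barycentric subdivision). Everything else is the routine ``connected, $H$-invariant, surjects onto the quotient, therefore everything'' template.
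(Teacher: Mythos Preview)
Your proposal is correct and follows the standard Bass--Serre ``fundamental domain'' template; the paper instead gives a bare-hands path-induction. Concretely, the paper fixes $\f\in G$, chooses an edge path $\alpha$ from $\wt{v}_1$ to $\f\wt{v}_1$, and proves by induction along $\alpha$ that every vertex encountered lies in $H\cdot\{\wt{v}_1,\dots,\wt{v}_n\}$; the inductive step is an explicit case split according to whether the current edge projects to a loop, a tree edge, or a non-tree non-loop edge, and the inversion issue (for loops) is handled inline rather than by subdividing. Your argument is organized differently: you first build the connected subgraph $Z$, then prove $HZ$ is connected via the generator-overlap observation, and finally prove $HZ=X$ by the single ``missing edge'' step. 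What your packaging buys is conceptual clarity and brevity---the three cases of the paper collapse into one sentence about $G_i$-transitivity on lifts at $\wt{v}_i$---at the cost of importing the ``no inversions / pass to barycentric subdivision'' convention, which you correctly flag. What the paper's packaging buys is complete self-containment: no graph-of-groups vocabulary, and the inversion case is absorbed into the case analysis (their Case~1 checks both possibilities $\f'\wt{v}_1=\wt{v}_1$ and $\f'\f_j^1\wt{v}_1=\wt{v}_1$).

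One small imprecision worth tightening in your write-up: when you assert that all lifts of $p(e)$ at $\wt{v}_i$ form a single $G_i$-orbit of ``the lift already placed in $Z$'', note that for a non-loop non-tree edge $\xi_m$ with endpoints $v_{i'},v_{j'}$ you only placed in $Z$ a lift touching $\wt{v}_{i'}$, not $\wt{v}_{j'}$; if your $\wt{v}_i=\wt{v}_{j'}$ you must instead compare with $\f_m\wt{\xi}_m$, which does touch $\wt{v}_{j'}$ and still lies in $HZ$. This is exactly the bookkeeping you anticipate in your final paragraph, and it does go through.
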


\begin{proof}
Let $S:=\bigcup_{i=1}^n(G_i\cup\{\f_j^i:1\leq j\leq k_i\})\bigcup \{\f_m:1\leq m\leq l\}$ and let $H$ be the subgroup of $G$ generated by $S$. Clearly, $H\subseteq G$. It remains to show that $G\subseteq H$. Let $\f\in G$. Fix the vertex $\wt{v}_1\in \wt{T}$ (we can choose any other vertex also). If $\f\wt{v}_1=\wt{v}_1$ then $\f\in G_1\subset H$, and hence we are done. Suppose $\f\wt{v}_1\neq \wt{v}_1$. Then, $\f\wt{v}_1$ is a vertex of $X$ which is not in $\wt{T}$. Let $\alpha$ be an edge path in $X$ joining $\wt{v}_1$ and $\f\wt{v}_1$. We show that each vertex of $\alpha$ which is not in $\wt{T}$ is of the form $\h\wt{v_i}$ for some $\h\in H$ and $1\leq i\leq n$. This will, in particular, prove that $\f\in H$, and hence we are done. We consider the following cases:

{\bf Case 1.} Suppose $\alpha$ is an edge, and let $\alpha=\wt{\xi}$. Then, there is a loop at the vertex $v_1$ in $Y$, say $\xi_j^1$. Let $\wt{\xi}_j^1$ be the lift of $\xi_j^1$ as in (i). If $\wt{\xi}_j^1=\wt{\xi}$, then $\f v_1=\f_j^1v_1$. This implies that $\f\in H$ and we are done. If not, there exists $\f'\in G$ such that $\f'\wt{\xi}_j^1=\wt{\xi}$. There are two possibilities. Suppose $\f'\wt{v_1}=\wt{v_1}$ and $\f'\f_j^1\wt{v_1}=\f\wt{v_1}$, then $\f'\in G_1\subset H$. As $\f^{-1}\f'\f_j^1\in G_1$ and $\f_j^1,\f'\in H$, we have $\f\in H$. A similar computation shows that if $\f'\wt{v_1}=\f\wt{v_1}$ and $\f'\f_j^1\wt{v_1}=\wt{v_1}$, then $\f\in H$. Hence, in this case, $\f\in H$.

{\bf Case 2.} Suppose $\alpha$ is not an edge. Let $\wt{\xi}$ be any edge of $\alpha$ joining two vertices, say, $\wt{v}$ and $\wt{w}$. There are two subcases to be considered.

{\bf Subcase 2(a).}  Suppose $\wt{v}=\wt{v}_i\in \wt{T}$ for some $i$ and $\wt{w}\notin \wt{T}$. Then, there exists $1\leq j\leq n$ such that $\wt{w}=\f'\wt{v_j}$ for some $\f'\in G$. If $j=i$, then as in Case 1, $\f'\in H$. If $i\neq j$, then there is an edge in $Y$ joining $v_i$ and $v_j$, say $\xi_m$. There are two possibilities. First we  assume that $\xi_m\in T$. Let $\wt{\xi}_m'$ be the lift of $\xi_m$ in $\wt{T}$ joining the vertices $\wt{v_i}$ and $\wt{v_j}$. Then, there exists $\f''\in G$ such that $\f''\wt{\xi}_m'=\wt{\xi}$. This implies that $\f''\in G_i$, which in turn implies that $\f'\in H$.

Now, suppose that $\xi_m\notin T$. By our assumption (ii), let $\wt{\xi}_m$ be a lift of $\xi_m$ such that exactly one vertex of $\wt{\xi}_m$ is either $\wt{v_i}$ or $\wt{v_j}$. Suppose that vertex is $\wt{v_i}$. If $\wt{\xi}_m=\wt{\xi}$, by our hypothesis, there exists $\f_m\in S$ such that $\f_m\f'\wt{v}_j=\wt{v}_j$. This implies that $\f_m\f'\in G_j\subset H$. Since $\f_m\in S\subset H$, $\f'\in H$. Suppose $\wt{\xi}_m\neq\wt{\xi}$ and the other vertex of $\wt{\xi}_m$ is $\f'''\wt{\xi_j}$ for some $\f'''\in G$. By our assumption (2), there exists $\f_m'\in S$ such that $\f_m'\f'''\wt{v_j}=\wt{v_j}$. This implies that $\f'''\in H$. Finally, there exists $\f_1\in G$ such that $\f_1\wt{\xi}_m=\wt{\xi}$. Then, $\f_1\wt{v_i}=\wt{v_i}$ and $\f_1\f'''\wt{v_j}=\f'\wt{v_j}$. This, in turn, implies that $\f'\in H$. A similar computation can be done when the vertex of $\wt{\xi}_m$ in $\wt{T}$ is $\wt{v_j}$.

{\bf Subcase 2(b).} Before proceeding further, we observe the following. Suppose $\wt{\xi}_1$ and $\wt{\xi}_2$ are two distinct edges of $\alpha$ and $\wt{\xi}_1\ast\wt{\xi}_2$ is the concatenation of these two edges such that one vertex of $\wt{\xi}_1$ is in $\wt{T}$ and both the vertices of $\wt{\xi}_2$ are not in $\wt{T}$. Let $\wt{v}_1'$ and $\wt{v}_2'$ be the concatenating vertex and the other vertex of $\wt{\xi}_2$, respectively. By Subcase 2(a), there exists $\h\in H$ and $1\leq i\leq n$ such that $\wt{v}_1'=\h\wt{v}_i$. Suppose $\wt{v}_2=\f'\wt{v}_j'$ for some $\f'\in G$ and $1\leq j\leq n$. Then, $\h^{-1}\wt{\xi}_2$ is the edge joining the vertices $\wt{v}_i$ and $\h^{-1}\f'\wt{v}_j$. Then, again by doing the same computation as in Subcase 2(a), $\h^{-1}\f'\in H$ and hence $\f'\in H$. Thus, by induction, we can show that if $\beta$ is a subpath in $\alpha$ such that only one vertex of $\beta$ is in $\wt{T}$, then all other vertices of $\beta$ are of the form $\h\wt{v}_i$, where $\h\in H$ and $1\leq i\leq n$. 

Now, suppose both the vertices of $\wt{\xi}$ are not in $\wt{T}$. Let, for $1\leq j\leq n$, $\wt{w}=\f'\wt{v_j}$ for some $\f'\in G$. Then, there exists a vertex $\wt{v}_p\in \wt{T}\cap \alpha$ and a subpath $\gamma$ of $\alpha$ joining the vertices $\wt{v}_p$ and $\wt{v}$ such that all the vertices of $\gamma$ other than $\wt{v}_p$ are not in $\wt{T}$. Thus, from the above discussion, $\wt{v}=\h'\wt{v}_i$ for some $\h'\in H$ and $1\leq i\leq n$. Then, $\h'^{-1}\wt{\xi}$ is an edge joining $\wt{v}_i$ and $\h'^{-1}\f'\wt{v}_j$. Thus, by Subcase 2(a), we can see that $\f'\in H$, and hence we are done.
\end{proof}

\subsection{The symplectic criteria for abelian covers}
\label{subsec:symp_criteria}
The action of $\map(S_h)$ on $H_1(S_h;\Z)$ induces a representation $\Psi:\map(S_h)\to \symp(2h,\Z)$ known as the \textit{symplectic representation} of $\map(S_h)$. The subgroup $\ker \Psi$, denoted by $\I(S_h)$, is known as the \textit{Torelli group} of $S_h$. For an integer $k\geq 2$, the kernel of the composition, denoted by $\map(S_h)[k]$, $\Psi_k:\map(S_h)\to \symp(2h,\Z)\to \symp(2h,\Z_k)$ is known as the \textit{level $k$ congruence subgroup} of $\map(S_h)$. Let $A$ be a finite abelian group acting freely on a closed surface $S_g$ with orbit space $\orb_A\approx S_h$, where $h=1+(g-1)/|A|\geq 2$. The cover $p:S_g \to S_h$ induces a short-exact sequence~\cite[Proposition 1.40]{hatcher_book}
\[
1\longrightarrow \pi_1(S_g)\xlongrightarrow{p_{\ast}} \pi_1(S_h)\xlongrightarrow{\eta} A \longrightarrow 1,
\]
where the map $\eta$ is known as a surface kernel map. 
For $i=1,2$ and $\eta_i:\pi_1(S_h)\to A$, we say that $\eta_1$ is equivalent to $\eta_2$ if there are automorphisms $\delta\in \aut(\pi_1(S_h))$ and $\omega\in \aut(A)$ such that $\eta_2=\omega \circ\eta_1\circ\delta$. The equivalence classes of surface kernel maps correspond to the isomorphism classes of finite-sheeted abelian regular covers (or conjugacy classes of free finite abelian group actions)~\cite[Proposition 2.2]{broughton91}.

Since $p$ is an abelian cover, $\eta$ factor through the \textit{induced map} $\eta':H_1(S_h;\Z)\to A$. Therefore, we also have the following exact sequence
\[
H_1(S_g;\Z)\xlongrightarrow{p_{\#}} H_1(S_h;\Z)\xlongrightarrow{\eta'} A \longrightarrow 0.
\]
For $F\in \map(S_h)$, let $F_{\#}$ be the induced automorphism of $H_1(S_h;\Z)$. We identify $F_{\#}$ with the symplectic matrix $\Psi(F)$ and $H_1(S_h;\Z)$ with $\Z^{2h}=\langle e_1,e_2,\dots,e_{2h} \rangle$, where $e_i$ is vector with $1$ at the $i^{th}$ place and $0$'s elsewhere. We know that $F\in \lmap_p(S_h)$ if and only if $F_{\ast}(p_{\ast}(\pi_1(S_g)))=p_{\ast}(\pi_1(S_g))$~\cite[Proposition 1.33]{hatcher_book}. Since $p_*(\pi_1(S_h))=\ker \eta$, $F\in \lmap_p(S_h)$ if and only if $F_*(\ker \eta)=\ker \eta$. Since $p$ is an abelian cover, we have $F\in \lmap_p(S_h)$ if and only if $F_{\#}(\ker \eta')=\ker \eta'$. For $F\in \lmap_p(S_h)$, we provide necessary and sufficient conditions on the entries of the matrix $\Psi(F)$ for elementary abelian groups with invariant factors of prime order. These conditions will be called the \textit{symplectic criteria}.

For a prime number $k$, we fix an elementary abelian $k$-group $\Z_k^r$ of rank $r\leq 2h$. Furthermore, we fix the following standard presentations $\pi_1(S_h)=\langle \alpha_1,\beta_1,\dots,\alpha_h,\beta_h\mid \textstyle\prod_{i=1}^h[\alpha_i,\beta_1]=1\rangle$ and $\Z_k^r=\langle c_1,c_2,\dots,c_r\mid c_i^k=[c_i,c_j]=1~\forall~i,j\rangle$, where $[x,y]$ denotes the commutator $xyx^{-1}y^{-1}$ of $x$ and $y$. The following result due to Broughton~\cite[Theorem 3.1]{broughton07} describes all isomorphism classes of $\Z_k^r$-covers $S_g\to S_h$.

\begin{theorem}
\label{thm:conj_elem_covers}
There exists an integer $r/2\leq K\leq \min(h,r)$ such that any surface kernel map $\pi_1(S_h)\to \Z_k^r$ is $\aut(\pi_1(S_h))\times \aut(\Z_k^r)$-equivalent to one of the following:
\[
\eta_K:=
\begin{cases}
\eta_K(\alpha_i)=c_i & i\leq K\\
\eta_K(\alpha_i)=0 & i>K\\
\eta_K(\beta_i)=c_{i+K} & i\leq r-K\\
\eta_K(\beta_i)=0 & i>r-K.
\end{cases}
\]
\end{theorem}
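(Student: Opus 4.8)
The plan is to convert this into a question about orbits of subspaces of a symplectic vector space over $\Z_k$ under the symplectic group, and then to apply Witt's theorem. Since $k$ is prime, $\Z_k$ is the field $\mathbb{F}_k$, and since $\Z_k^r$ is abelian and annihilated by $k$, any (surjective) surface kernel map $\eta\colon\pi_1(S_h)\to\Z_k^r$ factors through $H_1(S_h;\Z_k)\cong\Z_k^{2h}$, inducing a surjective $\Z_k$-linear map $\bar\eta\colon\Z_k^{2h}\to\Z_k^r$. Under this reduction the equivalence $\eta_2=\omega\circ\eta_1\circ\delta$ becomes postcomposition of $\bar\eta$ by $\omega\in\aut(\Z_k^r)=\mathrm{GL}(r,\Z_k)$ together with precomposition by the linear automorphism of $H_1(S_h;\Z_k)$ induced by $\delta\in\aut(\pi_1(S_h))$. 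I would record two standard facts here: the image of $\aut(\pi_1(S_h))$ acting on $H_1(S_h;\Z)$ contains $\symp(2h,\Z)$, and the reduction $\symp(2h,\Z)\to\symp(2h,\Z_k)$ is surjective. Together these show that the precomposition action realizes all of $\symp(2h,\Z_k)$ acting on $\Z_k^{2h}$ equipped with the mod-$k$ algebraic intersection form $\omega$.

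With this in place, the classification becomes pure linear algebra. A surjection $\bar\eta$ is determined up to $\mathrm{GL}(r,\Z_k)$-postcomposition by its kernel $U:=\ker\bar\eta$, a codimension-$r$ subspace of $\Z_k^{2h}$; hence equivalence classes of surface kernel maps correspond bijectively to $\symp(2h,\Z_k)$-orbits of codimension-$r$ subspaces of $(\Z_k^{2h},\omega)$. By Witt's extension theorem for nondegenerate alternating forms over the field $\Z_k$, any isometry between two subspaces extends to an element of $\symp(2h,\Z_k)$. Since an alternating form over a field is classified up to isometry by its rank, and all of our subspaces share the common dimension $d=2h-r$, the orbit of $U$ is determined by the single invariant $s:=\dim(U\cap U^{\perp})$, the dimension of the radical of $\omega|_U$.

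It then remains to match $s$ with Broughton's parameter $K$. A direct computation in the standard symplectic basis dual to $\alpha_1,\beta_1,\dots,\alpha_h,\beta_h$ shows that $U_K:=\ker\bar\eta_K$ is spanned by the classes of $\alpha_i$ for $i>K$ and of $\beta_i$ for $i>r-K$, and that its radical is spanned exactly by the classes of $\beta_i$ for $r-K<i\leq K$, so that $s=\dim(U_K\cap U_K^{\perp})=2K-r$. Solving gives $K=(s+r)/2$, and the constraints $s\geq0$, $s\leq r$ (as $U\cap U^{\perp}\subseteq U^{\perp}$ with $\dim U^{\perp}=r$), and $s\leq 2h-r$ (as $U\cap U^{\perp}\subseteq U$ with $\dim U=2h-r$) translate precisely into $r/2\leq K\leq\min(h,r)$, while $s\equiv r\pmod 2$ ensures $K\in\Z$. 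Thus, starting from an arbitrary $\eta$, I would compute $s=\dim(\ker\bar\eta\cap(\ker\bar\eta)^{\perp})$, set $K=(s+r)/2$, and conclude from Witt's theorem that $\ker\bar\eta$ and $U_K$ lie in the same $\symp(2h,\Z_k)$-orbit, whence $\eta$ is equivalent to $\eta_K$.

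I expect the main obstacle to be the reduction step rather than the orbit analysis: one must carefully verify that the $\aut(\pi_1(S_h))\times\aut(\Z_k^r)$-action descends to the combined $\symp(2h,\Z_k)\times\mathrm{GL}(r,\Z_k)$-action on surjective linear maps, and in particular that precomposition realizes the \emph{full} symplectic group modulo $k$, which rests on the surjectivity of $\symp(2h,\Z)\to\symp(2h,\Z_k)$. Once that is secured, Witt's extension theorem and the radical computation for $\eta_K$ are routine.
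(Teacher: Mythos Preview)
The paper does not supply a proof of this statement; it is quoted as a result of Broughton \cite[Theorem~3.1]{broughton07} and then used as input for the symplectic criteria in Theorem~\ref{thm:sym_criteria}. So there is no proof in the paper to compare against.

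Your outline is nonetheless a correct and self-contained argument, and it is the natural one: reduce to surjective $\Z_k$-linear maps $H_1(S_h;\Z_k)\to\Z_k^r$, pass to kernels, and classify codimension-$r$ subspaces of a symplectic space over the field $\Z_k$ up to $\symp(2h,\Z_k)$ via Witt's extension theorem, the invariant being the dimension $s$ of the radical. Your computation $s=2K-r$ for $U_K=\ker\bar\eta_K$ is correct, as is the translation of the inequalities on $s$ into $r/2\le K\le\min(h,r)$ and the parity observation forcing $K\in\Z$.

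One small point to tighten in the reduction step: the image of $\aut(\pi_1(S_h))$ in $\aut(H_1(S_h;\Z_k))$ is in general the group of symplectic similitudes with multiplier $\pm 1$, not just $\symp(2h,\Z_k)$, since orientation-reversing automorphisms (which exist by Dehn--Nielsen--Baer) negate the intersection form. For your bijection between equivalence classes and $\symp$-orbits of subspaces to be literally correct you should note that these extra elements create no new orbits: scaling the restricted form by $-1$ does not change its radical, so by Witt the similitude orbits and the $\symp(2h,\Z_k)$-orbits of subspaces coincide. With that remark added, the argument is complete.
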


We derive the symplectic criteria for the map described in Theorem~\ref{thm:conj_elem_covers} with corresponding cover $p_K$. We have the following result.

\begin{theorem}
\label{thm:sym_criteria}
For a prime number $k$, let $p_K:S_g \to S_h$ be a regular $\Z_k^r$-cover. For $F\in \map(S_h)$ and $\Psi:\map(S_h)\to \symp(2h;\Z)$, let $\Psi(F)=(a_{ij})_{2h\times 2h}$. Then $F \in \lmap_{p_K}(S_h)$ if and only 
for $j\in \{2K+1,2K+3,\dots,2h-1\}\cup \{2(r-K+1),2(r-K+2),\dots,2K,\dots,2h\}$, we have $a_{ij}\equiv 0\pmod k$ for every $i\in \{1,2,\dots,2(r-K)+1,2(r-K)+3,\dots,2K-1\}$. Furthermore, $\map(S_h)[k]\subset\lmap_p(S_h)$.
\end{theorem}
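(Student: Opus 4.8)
The strategy is to make the subgroup $\ker\eta_K'\leq\Z^{2h}$ completely explicit in the standard basis $e_1,\dots,e_{2h}$ and then read off what it means for the matrix $\Psi(F)$ to preserve it. Write $I$ for the set of indices $i$ with $\eta_K'(e_i)\neq 0$ and $J:=\{1,\dots,2h\}\setminus I$, so that $e_j\in\ker\eta_K'$ exactly for $j\in J$. The first task is to check that $I$ and $J$ are precisely the index sets displayed in the statement; this is a direct, if slightly fiddly, translation of the formula for $\eta_K$ in Theorem~\ref{thm:conj_elem_covers} (taking $e_{2i-1}$ and $e_{2i}$ to be the classes of $\alpha_i$ and $\beta_i$), where one uses $r/2\le K\le\min(h,r)$ to see that the listed ranges behave as claimed. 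Once this is done, note that $\eta_K'$ carries $\{e_i:i\in I\}$ bijectively onto the basis $\{c_1,\dots,c_r\}$ of $\Z_k^r$, so a vector $\sum_\ell x_\ell e_\ell$ lies in $\ker\eta_K'$ if and only if $x_i\equiv 0\pmod k$ for every $i\in I$. Equivalently, $\ker\eta_K'$ is freely generated by $\{e_j:j\in J\}\cup\{k e_i:i\in I\}$, and in particular $k\Z^{2h}\subseteq\ker\eta_K'$.

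Next I would reduce the liftability criterion to a statement about those few generators. As recalled in Section~\ref{subsec:symp_criteria}, for an abelian cover one has $F\in\lmap_{p_K}(S_h)$ if and only if $F_\#(\ker\eta_K')=\ker\eta_K'$. Since $\Psi(F)$ is invertible over $\Z$ and $[\Z^{2h}:\ker\eta_K']=|\Z_k^r|=k^r$, both $\ker\eta_K'$ and its image $F_\#(\ker\eta_K')$ have index $k^r$ in $\Z^{2h}$; hence $F_\#(\ker\eta_K')=\ker\eta_K'$ is equivalent to the one-sided inclusion $F_\#(\ker\eta_K')\subseteq\ker\eta_K'$. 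Testing this on the free generators above, the inclusions $F_\#(k e_i)=k F_\#(e_i)\in k\Z^{2h}\subseteq\ker\eta_K'$ are automatic, so the inclusion is equivalent to $F_\#(e_j)\in\ker\eta_K'$ for every $j\in J$.

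Finally I would translate this to the matrix entries. Writing $F_\#(e_j)=\sum_i a_{ij}e_i$ for the $j$-th column of $\Psi(F)$ and using the description of $\ker\eta_K'$ from the first step, the condition $F_\#(e_j)\in\ker\eta_K'$ says precisely that $a_{ij}\equiv 0\pmod k$ for every $i\in I$. Letting $j$ range over $J$ gives the stated criterion. For the last assertion: if $F\in\map(S_h)[k]$ then $\Psi(F)\equiv\mathrm{Id}\pmod k$, so $a_{ij}\equiv\delta_{ij}\pmod k$; since $I$ and $J$ are disjoint, $a_{ij}\equiv 0\pmod k$ for all $i\in I$ and $j\in J$, and therefore $F\in\lmap_{p_K}(S_h)$.

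The genuinely delicate part is the very first step: the combinatorial identification of the support of $\eta_K'$ on the standard basis with the prescribed unions of arithmetic progressions, and likewise for its complement. Everything after that is a routine unwinding of the homological liftability criterion, the only non-bookkeeping ingredient being the index-counting observation that a one-sided inclusion of $\ker\eta_K'$ into itself under the $\Z$-invertible matrix $\Psi(F)$ is automatically an equality.
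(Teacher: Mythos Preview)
Your proposal is correct and follows essentially the same route as the paper: make $\ker\eta_K'$ explicit in the standard basis, use the homological liftability criterion $F_\#(\ker\eta_K')=\ker\eta_K'$, and read off the congruence conditions on the columns of $\Psi(F)$ indexed by $J$. Your index-counting step (reducing equality to the one-sided inclusion because $\Psi(F)$ is $\Z$-invertible and both subgroups have index $k^r$) and the observation that $F_\#(ke_i)\in k\Z^{2h}\subseteq\ker\eta_K'$ is automatic are made more explicit than in the paper's proof, which passes over these points rather quickly; otherwise the arguments coincide.
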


\begin{proof}
Let $H_1(S_h;\Z)=\langle a_1,b_1,\dots,a_h,b_h\rangle$. For the map $\eta_K:\pi_1(S_h)\to \Z_k^r$ described in Theorem~\ref{thm:conj_elem_covers}, the induced map $\eta_K':H_1(S_h;\Z)\to \Z_k^r$ is given as
\[
\eta_K':=
\begin{cases}
\eta_K'(a_i)=c_i & i\leq K\\
\eta_K'(a_i)=0 & i>K\\
\eta_K'(b_i)=c_{i+K} & i\leq r-K\\
\eta_K'(b_i)=0 & i>r-K.
\end{cases}
\]
It follows that
\[
\ker \eta_K'=\langle ka_1,kb_1,\dots, ka_{r-K},kb_{r-K},ka_{r-K+1}, b_{r-K+1},\dots, ka_K,b_K,a_{K+1},b_{K+1},\dots,a_h,b_h\rangle.
\]
We have $F\in \lmap_{p_K}(S_h)$ if and only if $F_{\#}(\ker \eta_K')=\ker \eta_K'$. Since $H_1(S_h;\Z)=\langle e_1,e_2,\dots,e_{2h} \rangle$, without loss of generality, we assume that $a_{i'}=e_{2i'-1}$ and $b_{i'}=e_{2i'}$ for $1\leq i'\leq h$. For $j\in \{2K+1,2K+3,\dots,2h-1\}$, we have $\Psi(F)e_j=\textstyle\sum_{i=1}^{2h}a_{ij}e_i\in \ker \eta_K'$ if and only if $a_{ij}\equiv 0 \pmod k$ for all $i\in \{1,2,\dots,2(r-K)+1,2(r-K)+3,\dots,2K-1\}$. Similarly, for $j\in \{2(r-K+1),2(r-K+2),\dots,2K,\dots,2h\}$, we have $\Psi(F)e_j=\textstyle\sum_{i=1}^{2h}a_{ij}e_i\in \ker \eta_K'$ if and only if $a_{ij}\equiv 0 \pmod k$ for all $i\in \{1,2,\dots,2(r-K)+1,2(r-K)+3,\dots,2K-1\}$. Now, it follows that $\map(S_h)[k]\subset\lmap_p(S_h)$.
\end{proof}

\begin{rem}
\label{rem:sym_criteria}
Let $A=\oplus_{i=1}^{r}\Z_{n_i}$ be an abelian group, where $n_i\mid n_{i+1}$. Given a finite-sheeted regular abelian cover $p:S_g\to S_h=S_g/A$, for $F\in \lmap_p(S_h)$, one can derive the necessary and sufficient conditions on the entries of matrix $\Psi(F)$ from $F_{\#}(\ker \eta')=\ker \eta'$, where $\eta:\pi_1(S_h)\to A$ is a surface kernel map corresponding to the cover $p$ and $\eta':H_1(S_h;\Z)\to A$ is the induced map. Since elements of $\map(S_h)[n_r]$ acts trivially on $H_1(S_h;\Z_{n_r})$, we have $\map(S_h)[n_r]\subset \lmap_p(S_h)$.
\end{rem}

\subsection{Reducing the action of $\lmap_p(S_h)$ on the curve graph to a finite graph}
\label{subsec:action_infinite_finite}
The nonseparating curve graph $\N(S_h)$ is defined as follows: the vertices of $\N(S_h)$ are nonseparating essential simple closed curves on $S_h$. Two vertices $c_1$ and $c_2$ are connected by an edge in $\N(S_h)$ if and only if $i(c_1,c_2)=1$, where $i(c_1,c_2)$ is the minimal number of intersections between any representatives of isotopy classes $c_1$ and $c_2$ known as their \textit{geometric intersection number}. It is known that $\N(S_h)$ is connected~\cite[Lemma 2]{lickorish64} (see~\cite[Theorem 4.4]{primer} and~\cite[Lemma A.2]{putman07} also). Let $V(\N(S_h))$ and $E(\N(S_h))$ be the set of vertices and the set of edges of $\N(S_h)$, respectively. By classification of surfaces, $\map(S_h)$ acts transitively on $V(\N(S_h))$ and $E(\N(S_h))$. Let $p$ be an abelian cover of $S_h$. Since $[\map(S_h):\lmap_p(S_h)]<\infty$, the number of orbits of $\lmap_p(S_h)$-action on $V(\N(S_h))$ and $E(\N(S_h))$ is bounded above by $[\map(S_h):\lmap_p(S_h)]$.

Let $k$ be the smallest positive integer such that $\map(S_h)[k]\subset \lmap_p(S_h)$. For $c\in V(\N(S_h))$, let $[c]$ and $[c]_k$ be the homology classes of $c$ in $H_1(S_h;\Z)$ and $H_1(S_h;\Z_k)$, respectively. Consider the equivalence relation on $V(\N(S_h))$ defined as $c_1$ and $c_2$ are equivalent if $[c_1]_k=[c_2]_k$, where $c_1,c_2\in V(\N(S_h))$. The equivalence relation induces a quotient map $\N(S_h)\to \N_k(S_h)$. Therefore, the quotient graph $\N_k(S_h)$ is connected. We will identify the vertices of $\N_k(S_h)$ with primitive elements of $\Z_k^{2h}$. We observe that the group $\Psi_k(\lmap_p(S_h))$ acts naturally on the graph $\N_k(S_h)$. The orbit graphs $\N(S_h)/\lmap_p(S_h)$ and $\N_k(S_h)/\Psi_k(\lmap_p(S_h))$ will be denoted by $\overline{\N(S_h)}$ and $\overline{\N_k(S_h)}$, respectively.

\begin{theorem}
\label{prop:curve_homology_cor}
Let $c,d\in V(\N(S_h))$ such that $[c]_k=[d]_k$. Then there exists a $F \in \map(S_h)[k]$ such that $F(c)=d$. Furthermore, let $\{c_1,c_2\},\{d_1, d_2\}\in E(\N(S_h))$ such that $[c_1]_k=[d_1]_k$ and $[c_2]_k=[d_2]_k$. Then there exists $F'\in \map(S_h)[k]$ such that $F'$ maps the edge $\{c_1,c_2\}$ to the edge $\{d_1, d_2\}$. Moreover, $\overline{\N(S_h)}$ and $\overline{\N_k(S_h)}$ are isomorphic.
\end{theorem}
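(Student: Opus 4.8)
The plan is to split the argument into a linear-algebraic step carried out inside $\symp(2h,\Z_k)$ and a topological step carried out inside the Torelli group $\I(S_h)$, glued together by the surjectivity of $\Psi$ and the inclusion $\I(S_h)\subseteq\map(S_h)[k]$ (valid because a Torelli element acts trivially on $H_1(S_h;\Z)$, hence on $H_1(S_h;\Z_k)$).

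\emph{The vertex statement.} First I would record the purely symplectic fact that $\symp(2h,\Z)[k]:=\ker(\symp(2h,\Z)\to\symp(2h,\Z_k))$ acts transitively on primitive vectors of $\Z^{2h}$ in a fixed residue class modulo $k$; this can be proved by composing ``level-$k$ transvections'' $x\mapsto x+k\langle x,w\rangle w$ and using primitivity to realize $\langle v,w\rangle=\pm 1$, or simply quoted from the literature on congruence subgroups of $\symp$. Since $c,d$ are nonseparating, $[c]$ and $[d]$ are primitive, and $[c]\equiv[d]\pmod k$ by hypothesis; lifting the resulting symplectic matrix through the surjection $\Psi$ produces $F_0\in\map(S_h)[k]$ with $[F_0(c)]=[d]$ in $H_1(S_h;\Z)$. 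Now $F_0(c)$ and $d$ are nonseparating curves with equal integral homology class, so by the classical transitivity of $\I(S_h)$ on nonseparating simple closed curves in a fixed primitive homology class (change of coordinates followed by correcting the resulting mapping class by the stabilizer of $d$; see \cite{primer,putman07}) there is $F_1\in\I(S_h)\subseteq\map(S_h)[k]$ with $F_1F_0(c)=d$. Then $F:=F_1F_0$ works.

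\emph{The edge statement.} By the vertex statement applied to $c_1,d_1$ we may assume $c_1=d_1=:a$, after which $c_2$ and $d_2$ each meet $a$ once. The change-of-coordinates principle (applied to the arcs obtained by cutting $S_h$ along $a$) produces $\psi\in\map(S_h)$ with $\psi(a)=a$ and $\psi(c_2)=d_2$, and then $\Psi_k(\psi)$ fixes $[a]_k$ and $[d_2]_k$. It remains to find $h\in\map(S_h)$ fixing both $a$ and $d_2$ with $\Psi_k(h)=\Psi_k(\psi)^{-1}$: such $h$ exists because the mapping class group of the complement $S_{h-1,1}$ of a regular neighborhood of $a\cup d_2$ surjects, via $\Psi_k$, onto the stabilizer of the pair $([a]_k,[d_2]_k)$ in $\symp(2h,\Z_k)$ — this stabilizer is the reduction mod $k$ of $\symp(\langle[a],[d_2]\rangle^{\perp})\cong\symp(2h-2,\Z)$, which $\map(S_{h-1,1})$ realizes — and $\Psi_k(\psi)^{-1}$ already lies there. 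Then $F':=h\psi\in\map(S_h)[k]$ carries $\{a,c_2\}$ to $\{a,d_2\}$. (Alternatively, apply the symplectic step to the pair of homology classes and then use transitivity of $\I(S_h)$ on ordered edges with fixed integral homology.)

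\emph{The isomorphism.} The assignment $c\mapsto[c]_k$ together with the induced map on edges gives a surjective, $\Psi_k$-equivariant graph map $\N(S_h)\to\N_k(S_h)$. The first two parts show that two vertices (resp.\ edges) of $\N(S_h)$ are $\lmap_p(S_h)$-equivalent if and only if their images are $\Psi_k(\lmap_p(S_h))$-equivalent: ``only if'' is immediate from equivariance, and ``if'' follows by pulling back a suitable $\Psi_k(\lmap_p(S_h))$-element, moving $c$ (resp.\ the edge) by it, and then applying the first two parts together with $\map(S_h)[k]\subseteq\lmap_p(S_h)$. Hence the graph map descends to a bijection on vertices and on edges which is a graph homomorphism in both directions, i.e.\ an isomorphism $\overline{\N(S_h)}\cong\overline{\N_k(S_h)}$. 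The main obstacle I anticipate is the edge case: arranging the correcting mapping class $h$ to be supported off $a\cup d_2$ while having the prescribed reduction mod $k$; the remaining ingredients are the change-of-coordinates principle and standard surjectivity properties of congruence subgroups of $\symp(2h,\Z)$.
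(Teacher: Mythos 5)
Your proposal is correct, but it takes a genuinely different route from the paper. The paper (which writes out only the case $h=2$) proves everything by lifting bases: it first shows that a symplectic basis of $H_1(S_h;\Z_k)$ containing $[c]_k$ (or the pair $[c_1]_k,[c_2]_k$) lifts to an integral symplectic basis beginning with $[c]$ (or $[c_1],[c_2]$), realizes both integral bases geometrically via Meeks--Patrusky, and then applies change of coordinates; membership of the resulting $F$ in $\map(S_h)[k]$ is automatic because $\Psi_k(F)$ fixes the chosen mod-$k$ basis elementwise. You instead decompose the problem into two transitivity facts glued by surjectivity of $\Psi$: transitivity of the level-$k$ congruence subgroup of $\symp(2h,\Z)$ on primitive vectors in a fixed residue class, and transitivity of $\I(S_h)$ on nonseparating curves in a fixed integral class, with the edge case handled by a correction supported in the complementary subsurface $S_{h-1,1}$ using surjectivity of its symplectic representation mod $k$ onto the stabilizer of the hyperbolic pair. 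Both routes are sound; the paper's is self-contained and treats vertices and edges uniformly with one technique, while yours is more modular and stated directly for all $h$, at the cost of having to quote or prove the two transitivity inputs (whose standard proofs are essentially the paper's basis-lifting argument in disguise). Two small points you should nail down: the congruence-subgroup transitivity on primitive vectors needs an actual proof or precise citation (the stabilizer-surjectivity argument, or a careful transvection computation), and in the edge step you must choose $\psi$ via the change-of-coordinates principle respecting orientations (orient so that both algebraic intersection numbers are $+1$; for $k=2$ signs are immaterial) so that $\Psi_k(\psi)$ genuinely stabilizes the ordered pair $([a]_k,[d_2]_k)$ rather than stabilizing it only up to sign.
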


We provide the proof of Theorem~\ref{prop:curve_homology_cor} through a series of lemmas only in the case of $h=2$ for brevity. Let $\Psi_k:\map(S_2)\to \mathrm{Sp}(4,\Z_k)$ be the $\Z_k$-homology representation of $\map(S_2)$. The symplectic form $\hat{i}:H_1(S_2;\Z)\times H_1(S_2;\Z)\to \Z$ descends to a symplectic form $\hat{i}_k:H_1(S_2;\Z_k)\times H_1(S_2;\Z_k)\to \Z_k$ defined as $\hat{i}_k(x,y):=\hat{i}(u,v)\pmod k$, where $u\equiv x \pmod k$ and $v\equiv y \pmod k$.

\begin{lem}
\label{lem:modk_homo_to_zhomo}
Let $u\in H_1(S_2;\Z)$ be a primitive vector and let $x\equiv u \pmod k$. Let $y\in~ H_1(S_2;\Z_k)$ be a primitive vector such that $\hat{i}_k(x,y)=1$. Then there exists a primitive vector $v\in H_1(S_2;\Z)$ such that $y\equiv v\pmod k$ and $\hat{i}(u,v)=1$. 
\end{lem}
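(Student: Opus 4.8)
The plan is to translate the statement into a concrete $\Z$-linear problem and then repair the symplectic pairing and primitivity by a single correction term lying in $kH_1(S_2;\Z)$. Identify $H_1(S_2;\Z)$ with $\Z^4$ carrying the standard symplectic intersection form $\hat{i}$, which is unimodular. The only structural input I will use is that, because $\hat{i}$ is unimodular and $u$ is primitive, the homomorphism $\hat{i}(u,-)\colon H_1(S_2;\Z)\to\Z$ is surjective: unimodularity identifies $H_1(S_2;\Z)$ with its $\Z$-dual via $v\mapsto\hat{i}(v,-)$, and under this identification primitive vectors correspond exactly to surjections onto $\Z$ (equivalently, $u$ primitive means $u$ extends to a symplectic basis). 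Fix once and for all an element $z\in H_1(S_2;\Z)$ with $\hat{i}(u,z)=1$.

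Next, choose any lift $w\in H_1(S_2;\Z)$ of $y$, so that $w\equiv y\pmod k$. Reducing modulo $k$ and using $u\equiv x\pmod k$ gives $\hat{i}(u,w)\equiv\hat{i}_k(x,y)=1\pmod k$, hence $\hat{i}(u,w)=1+km$ for some $m\in\Z$. Now set $v:=w-km\,z$. Since $km\,z\in kH_1(S_2;\Z)$ we still have $v\equiv w\equiv y\pmod k$, and
\[
\hat{i}(u,v)=\hat{i}(u,w)-km\,\hat{i}(u,z)=(1+km)-km=1 .
\]
Finally, the equality $\hat{i}(u,v)=1$ forces $v$ to be primitive: if $v=a\,v'$ with $a\ge 2$ and $v'\in H_1(S_2;\Z)$, then $1=\hat{i}(u,v)=a\,\hat{i}(u,v')$, which is impossible. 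Thus $v$ is the required vector.

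There is essentially no obstacle in this argument; the only point deserving care is the surjectivity of $\hat{i}(u,-)$, which is precisely where the primitivity hypothesis on $u$ is consumed, together with the (easy) observation that $\hat{i}_k(x,y)=1$ already forces $y$ to be primitive modulo $k$, so primitivity of $y$ never has to be invoked separately. I also note that the same computation goes through verbatim for $H_1(S_h;\Z)$ with any $h\ge 2$, which is presumably why the excerpt records only the case $h=2$.
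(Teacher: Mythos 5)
Your proof is correct, and it takes a genuinely different and noticeably shorter route than the paper's. The paper first fixes a primitive $v'$ with $\hat{i}(u,v')=1$, decomposes $H_1(S_2;\Z_k)=\langle x,y'\rangle\oplus\langle x,y'\rangle^{\perp}$ with $y'\equiv v'\pmod k$, writes $y=z+z^{\perp}$ in this splitting, and lifts the two pieces separately: the span part to $w=m'u+v'$ and the perpendicular part to an element $w'\in\langle u,w\rangle^{\perp}$, finally setting $v=w+w'$. You instead take an arbitrary integral lift $w$ of $y$, observe $\hat{i}(u,w)\equiv\hat{i}_k(x,y)=1\pmod k$, and repair it by a single correction $v=w-km\,z$, where $z$ is chosen with $\hat{i}(u,z)=1$; the existence of $z$ is exactly where unimodularity of the intersection form and primitivity of $u$ enter (equivalently, $\hat{i}(u,-)$ is onto $\Z$). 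Both arguments conclude primitivity of $v$ identically from $\hat{i}(u,v)=1$. What your approach buys is economy and generality: no choice of a mod-$k$ orthogonal splitting or lifting of the perpendicular summand is needed, and, as you note, the argument works verbatim for $H_1(S_h;\Z)$ for any $h$ and for arbitrary $k$. What the paper's construction buys is a lift whose two components sit in prescribed subgroups ($\langle u,v'\rangle$ and its symplectic complement), which is in the spirit of how the lemma is then fed into the basis-lifting Lemma~\ref{lem:basis_modk_to_z}; however, since that lemma only invokes the stated conclusion of Lemma~\ref{lem:modk_homo_to_zhomo}, your proof serves equally well as a drop-in replacement.
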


\begin{proof}
There exists a primitive vector $v'\in H_1(S_2;\Z)$ such that $\hat{i}(u,v')=1$. Let $y'\equiv v'\pmod k$. We write $H_1(S_2;\Z_k)=\langle x,y'\rangle \oplus \langle x,y'\rangle^{\perp}$. Since $y\in H_1(S_2;Z_k)$, we have $y=z+z^{\perp}$, where $z\in \langle x,y'\rangle$ and $z^{\perp}\in \langle x,y'\rangle^{\perp}$. Since $1=\hat{i}_k(x,y)=\hat{i}_k(x,z)+\hat{i}_k(x,z^{\perp})$ and $\hat{i}_k(x,z^{\perp})=0$, we have $\hat{i}_k(x,z)=1$. Since $z\in \langle x,y'\rangle$ and $\hat{i}_k(x,y')=1$, we must have $z=mx+y'$ for some $m\in \Z_k$. For $w:=m'u+v'$ and $m'\equiv m \pmod k$, we have $w\equiv mx+y'=z \pmod k$ and $\hat{i}(u,w)=\hat{i}(u,v')=1$. Since $\langle x,z\rangle^{\perp}=\langle x,y'\rangle^{\perp}$ and $\langle u,w\rangle^{\perp}$ maps to $\langle x,z\rangle^{\perp} \pmod k$, we can choose $w'\in \langle u,w\rangle^{\perp}$ such that $w'\equiv z^{\perp}\pmod k$. Then for $v:=w+w'$, we have $v\equiv y \pmod k$ and $\hat{i}(u,v)=1$. If $v=nv''$ for some $n\in \mathbb{N}$ and $v''\in H_1(S_2;\Z)$, then we have $1=\hat{i}(u,v)=n\hat{i}(u,v'')$. This implies that $n=1$ and $v$ is primitive.  
\end{proof}

\begin{lem}
\label{lem:basis_modk_to_z}
Let $\{x_1,y_1,x_2,y_2\}$ be a symplectic basis for $H_1(S_2;\Z_k)$. Then there exists a symplectic basis $\{u_1,v_1,u_2,v_2\}$ of $H_1(S_2;\Z)$ such that, for $i=1,2$, $u_i\equiv x_i\pmod k$ and $v_i\equiv y_i \pmod k$.
\end{lem}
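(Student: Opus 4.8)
The plan is to construct the integral symplectic basis $\{u_1,v_1,u_2,v_2\}$ one symplectic pair at a time, applying Lemma~\ref{lem:modk_homo_to_zhomo} to lift each individual vector and then passing to a rank-$2$ symplectic orthogonal complement to handle the second pair.

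First I would record the elementary fact that any primitive vector $\bar u\in H_1(S_2;\Z_k)$ admits a primitive integral lift $u\in H_1(S_2;\Z)$ with $u\equiv\bar u\pmod k$: from an arbitrary lift $w$, primitivity of $\bar u$ means $\gcd(w_1,\dots,w_4,k)=1$, and one clears the common factor of the coordinates of $w$ by adding a suitable multiple of $k$ to a single coordinate (for each prime $p$ dividing that common factor, adding multiples of $k$ either changes nothing modulo $p$, when $p\mid k$ — in which case $p$ does not divide that coordinate to begin with — or runs through all residues modulo $p$, when $p\nmid k$; a Chinese-Remainder argument then produces the required multiple). Applying this to $x_1$ yields a primitive $u_1\in H_1(S_2;\Z)$ with $u_1\equiv x_1\pmod k$; since $\hat{i}_k(x_1,y_1)=1$, Lemma~\ref{lem:modk_homo_to_zhomo} applied with $u=u_1$, $x=x_1$, $y=y_1$ then gives a primitive $v_1\in H_1(S_2;\Z)$ with $v_1\equiv y_1\pmod k$ and $\hat{i}(u_1,v_1)=1$. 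Thus $\{u_1,v_1\}$ is a symplectic pair reducing mod $k$ to $\{x_1,y_1\}$.

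Next I would pass to orthogonal complements. Because $\hat{i}(u_1,v_1)=1$, the sublattice $\langle u_1,v_1\rangle\subset H_1(S_2;\Z)$ is nondegenerate and unimodular for the intersection form, so $H_1(S_2;\Z)=\langle u_1,v_1\rangle\oplus W$ is an orthogonal direct sum with $W:=\langle u_1,v_1\rangle^{\perp}$ again a unimodular rank-$2$ symplectic lattice. Tensoring with $\Z_k$ exhibits $W\otimes\Z_k$ as a direct summand of $H_1(S_2;\Z_k)$ contained in $\langle x_1,y_1\rangle^{\perp}$, and a cardinality count forces $W\otimes\Z_k=\langle x_1,y_1\rangle^{\perp}$. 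Now $\{x_2,y_2\}$ is a symplectic basis of $\langle x_1,y_1\rangle^{\perp}$, and I would repeat the first two steps inside $W\cong\Z^2$: lift $x_2$ to a primitive $u_2\in W$ as above, then complete it to a pair $\{u_2,v_2\}$ in $W$ with $v_2\equiv y_2\pmod k$ and $\hat{i}(u_2,v_2)=1$ (the rank-$2$ case is immediate — extend $u_2$ to an integral basis $\{u_2,v\}$ of $W$ with $\hat{i}(u_2,v)=1$ and correct $v$ by the multiple of $u_2$ dictated mod $k$; alternatively, the proof of Lemma~\ref{lem:modk_homo_to_zhomo} carries over verbatim to any genus). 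Since $\langle u_1,v_1\rangle$ and $\langle u_2,v_2\rangle$ are orthogonal and each is a symplectic pair, $\{u_1,v_1,u_2,v_2\}$ is a symplectic basis of $H_1(S_2;\Z)$ reducing mod $k$ to $\{x_1,y_1,x_2,y_2\}$, as required.

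The only genuinely delicate point is the compatibility in the third step — that the orthogonal decomposition $H_1(S_2;\Z)=\langle u_1,v_1\rangle\oplus W$ reduces mod $k$ to $H_1(S_2;\Z_k)=\langle x_1,y_1\rangle\oplus\langle x_1,y_1\rangle^{\perp}$ — since over $\Z_k$ a complement of $\langle x_1,y_1\rangle$ need not coincide with its orthogonal complement a priori; this is precisely where the unimodularity of $\langle u_1,v_1\rangle$ (hence $\hat{i}(u_1,v_1)=1$) and the cardinality count are used. Everything else is an application of Lemma~\ref{lem:modk_homo_to_zhomo} together with standard facts about unimodular alternating lattices. One could instead shortcut the whole argument by invoking surjectivity of $\mathrm{Sp}(4,\Z)\to\mathrm{Sp}(4,\Z_k)$, but the route above stays within the elementary framework already developed.
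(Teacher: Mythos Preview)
Your proof is correct and, in fact, more careful than the paper's own argument. The paper applies Lemma~\ref{lem:modk_homo_to_zhomo} twice \emph{independently}, once for $i=1$ and once for $i=2$, to produce pairs $(u_1,v_1)$ and $(u_2,v_2)$ lifting $(x_1,y_1)$ and $(x_2,y_2)$, and then asserts that because these reduce to the orthogonal pairs $\langle x_1,y_1\rangle$ and $\langle x_2,y_2\rangle$ modulo $k$, the four vectors form an integral symplectic basis. As written, that last step is not fully justified: the cross-pairings $\hat{i}(u_1,u_2)$, $\hat{i}(u_1,v_2)$, $\hat{i}(v_1,u_2)$, $\hat{i}(v_1,v_2)$ are only known to vanish \emph{modulo $k$}, not over $\Z$, so a priori the two integral pairs need not be orthogonal. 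Your route sidesteps this entirely by constructing the second pair inside the integral orthogonal complement $W=\langle u_1,v_1\rangle^{\perp}$, so that orthogonality over $\Z$ is built in from the start. The small price is the verification that $W\otimes\Z_k=\langle x_1,y_1\rangle^{\perp}$, which you handle cleanly via unimodularity of $\langle u_1,v_1\rangle$ and the cardinality count. In short, both proofs have the same architecture (lift one symplectic pair, then the other), but yours supplies the missing control on the cross-pairings that the paper's sketch leaves implicit.
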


\begin{proof}
We have $H_1(S_2;\Z_k)=\langle x_1,y_1\rangle\oplus \langle x_2,y_2\rangle$ such that $\langle x_2,y_2\rangle=\langle x_1,y_1\rangle^{\perp}$. For $i=1,2$, by Lemma~\ref{lem:modk_homo_to_zhomo}, there exist $u_i,v_i\in H_1(S_2,\Z)$ such that $\hat{i}(u_i,v_i)=1$, $u_i\equiv x_i \pmod k$, and $v_i\equiv y_i\pmod k$. Since $\langle u_1,v_1\rangle$ and $\langle u_2,v_2\rangle$ maps to $\langle x_1,y_1\rangle$ and $\langle x_2,y_2\rangle$ modulo $k$, we have $\{ u_1,v_1,u_2,v_2\}$ is a symplectic basis for $H_1(S_2;\Z)$.
\end{proof}

The following lemma is well-known in the theory of curves on surfaces, which says that a symplectic basis of $H_1(S_2;\Z)$ can be realized by simple closed curves on $S_2$ known as a \textit{geometric symplectcic basis} (follows from the work of Meeks-Patrusky~\cite{meek78}) (see~\cite[Lemma A.3]{putman07} also).

\begin{lem}
\label{lem:basis_z_to_curves}
Let $\{u_1,v_1,u_2,v_2\}$ be a symplectic basis for $H_1(S_2;\Z)$. Then, For $i=1,2$, there exist simple closed curves $a_i$ and $b_i$ on $S_2$ such that $[a_i]=u_i$, $[b_i]=v_i$, $i(a_i,b_i)=1$ and $i(a_1,b_2)=i(a_1,a_2)=i(b_1,b_2)=i(a_2,b_1)=0$.
\end{lem}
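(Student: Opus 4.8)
The plan is to reduce the statement to two standard ingredients: the surjectivity of the symplectic representation, and the existence of a single, explicit geometric symplectic basis on $S_2$. First I would realize $S_2$ in its usual genus-two picture and fix simple closed curves $\alpha_1,\beta_1,\alpha_2,\beta_2$ on $S_2$ with $i(\alpha_i,\beta_i)=1$ for $i=1,2$ and $i(\alpha_1,\alpha_2)=i(\alpha_1,\beta_2)=i(\beta_1,\alpha_2)=i(\beta_1,\beta_2)=0$, chosen so that $\{[\alpha_1],[\beta_1],[\alpha_2],[\beta_2]\}$ is exactly the standard symplectic basis of $H_1(S_2;\Z)$ (in the notation of the excerpt, $[\alpha_i]=e_{2i-1}$ and $[\beta_i]=e_{2i}$). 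Such a configuration is visible in the standard model of $S_2$ and serves as the base case.

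Given the symplectic basis $\{u_1,v_1,u_2,v_2\}$ of $H_1(S_2;\Z)$ in the hypothesis, I would use the fact that $\symp(4,\Z)$ acts transitively on ordered symplectic bases of $H_1(S_2;\Z)$ to produce a matrix $M\in\symp(4,\Z)$ with $M[\alpha_1]=u_1$, $M[\beta_1]=v_1$, $M[\alpha_2]=u_2$, $M[\beta_2]=v_2$. By the surjectivity of $\Psi:\map(S_2)\to\symp(4,\Z)$ (the right-hand map in the exact sequence recalled in Section~\ref{sec:into}), there is $F\in\map(S_2)$ with $\Psi(F)=M$; fix a homeomorphism $\phi$ representing $F$.

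Finally, set $a_i:=\phi(\alpha_i)$ and $b_i:=\phi(\beta_i)$ for $i=1,2$. Since $\phi$ is a homeomorphism, these are simple closed curves on $S_2$, and since $F$ acts on $H_1(S_2;\Z)$ by $M$ we get $[a_i]=M[\alpha_i]=u_i$ and $[b_i]=M[\beta_i]=v_i$. Because geometric intersection number is a homeomorphism invariant, $i(a_i,b_i)=i(\alpha_i,\beta_i)=1$ and $i(a_1,b_2)=i(a_1,a_2)=i(b_1,b_2)=i(a_2,b_1)=0$, which is precisely the asserted conclusion.

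I do not expect a genuine obstacle: the only step needing care is the existence of the explicit base configuration $\{\alpha_i,\beta_i\}$ realizing the standard symplectic basis, which one reads off from the standard picture of $S_2$ (equivalently, from the change-of-coordinates principle for surfaces). If one prefers to avoid invoking surjectivity of $\Psi$, the same curves can instead be built directly in the spirit of Meeks--Patrusky: realize $u_1$ by a nonseparating simple closed curve $a_1$, realize $v_1$ by a simple closed curve $b_1$ meeting $a_1$ once (possible since $\hat{i}(u_1,v_1)=1$), cut along a regular neighborhood of $a_1\cup b_1$ to obtain a one-holed torus carrying $\langle u_1,v_1\rangle^{\perp}$, and repeat inside it to obtain $a_2,b_2$ disjoint from $a_1$ and $b_1$.
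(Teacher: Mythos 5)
Your argument is correct, but it is worth noting that the paper does not actually prove this lemma: it simply quotes it as well-known, attributing it to Meeks--Patrusky and pointing to Putman's Lemma A.3, which proceed by the direct geometric construction you sketch at the end (realize $u_1$ by a nonseparating curve, realize $v_1$ by a curve meeting it once, cut along a neighborhood of their union, and induct on the complementary one-holed torus). Your main route is different: you start from an explicit geometric symplectic basis $\{\alpha_i,\beta_i\}$ realizing the standard homology basis, observe that the change-of-basis matrix $M$ sending the standard basis to $(u_1,v_1,u_2,v_2)$ lies in $\symp(4,\Z)$ (here one should note that a symplectic basis of the unimodular lattice $H_1(S_2;\Z)$ is automatically a $\Z$-basis, so $M$ and $M^{-1}$ are integral), invoke surjectivity of $\Psi:\map(S_2)\to\symp(4,\Z)$ to find $F$ with $\Psi(F)=M$, and push the curves forward, using homeomorphism-invariance of geometric intersection number. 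This is valid and arguably slicker, provided surjectivity of $\Psi$ is taken as the standard fact proved independently (e.g.\ by realizing transvection generators of $\symp(4,\Z)$ by Dehn twists); that independence matters, since some treatments derive surjectivity of $\Psi$ from precisely the realization statement you are proving, which would be circular. The trade-off is that your main argument leans on the full strength of $\Psi$ being onto and on the change-of-coordinates principle, whereas the cited Meeks--Patrusky/Putman construction is elementary and local, building the curves by hand; your closing paragraph already records that alternative, so either route suffices.
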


\begin{cor}
\label{cor:equal_modk_vertex}
Let $c,d\in V(\N(S_2))$ such that $[c]_k=[d]_k$. Then there exists $F \in \map(S_2)[k]$ such that $F(c)=d$.
\end{cor}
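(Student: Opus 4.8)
The plan is to combine the three preceding lemmas with the change of coordinates principle. Since $c$ and $d$ are nonseparating, $[c]$ and $[d]$ are primitive in $H_1(S_2;\Z)$ and $[c]\equiv[d]\pmod k$; write $x_0:=[c]_k=[d]_k$. The strategy is to produce a geometric symplectic basis of $S_2$ whose first curve is $c$ and a second one whose first curve is $d$, arranged so that the two ordered tuples of homology classes of the realizing curves are congruent modulo $k$. The homeomorphism relating these two configurations, supplied by the change of coordinates principle, then carries $c$ to $d$ and induces the identity on $H_1(S_2;\Z_k)$, so it lies in $\map(S_2)[k]$.

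First I would fix a symplectic basis $\{x_0,y_1,y_2,y_3\}$ of $H_1(S_2;\Z_k)$ with first vector $x_0$ and lift it to a symplectic basis of $H_1(S_2;\Z)$ with first vector $[c]$. Applying Lemma~\ref{lem:modk_homo_to_zhomo} with $u=[c]$ (note $\hat i_k(x_0,y_1)=1$) gives a primitive $v_1^c\in H_1(S_2;\Z)$ with $v_1^c\equiv y_1\pmod k$ and $\hat i([c],v_1^c)=1$. The rank-two symplectic sublattice $L^c:=\langle[c],v_1^c\rangle^{\perp}$ reduces modulo $k$ onto $\langle x_0,y_1\rangle^{\perp}=\langle y_2,y_3\rangle$, so the rank-two analogue of Lemma~\ref{lem:basis_modk_to_z} (equivalently, the surjectivity of $\Sl(2,\Z)\to\Sl(2,\Z_k)$) yields a symplectic basis $\{u_2^c,v_2^c\}$ of $L^c$ reducing to $\{y_2,y_3\}$. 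Then $\mathcal U^c:=\{[c],v_1^c,u_2^c,v_2^c\}$ is a symplectic basis of $H_1(S_2;\Z)$ reducing modulo $k$ to $\{x_0,y_1,y_2,y_3\}$. Repeating the construction verbatim with $d$ in place of $c$ produces a symplectic basis $\mathcal U^d:=\{[d],v_1^d,u_2^d,v_2^d\}$ of $H_1(S_2;\Z)$ reducing modulo $k$ to the same $\{x_0,y_1,y_2,y_3\}$.

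Next I would realize these homology data by curves relative to the chosen first curves: there is a geometric symplectic basis $(c,b_1^c,a_2^c,b_2^c)$ with first curve $c$ and homology classes $\mathcal U^c$, and likewise a geometric symplectic basis $(d,b_1^d,a_2^d,b_2^d)$ with homology classes $\mathcal U^d$. By the change of coordinates principle there is $\phi\in\map(S_2)$ carrying the first configuration onto the second; then $\phi(c)=d$, and $\Psi(\phi)$ sends the ordered tuple $\mathcal U^c$ onto $\mathcal U^d$. Since $\mathcal U^c\equiv\mathcal U^d\pmod k$ componentwise, $\Psi_k(\phi)$ fixes each vector of $\{x_0,y_1,y_2,y_3\}$, hence $\Psi_k(\phi)$ is the identity and $\phi\in\map(S_2)[k]$. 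Taking $F=\phi$ finishes the proof.

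The step I expect to be the main obstacle is the relative realization in the previous paragraph. Lemma~\ref{lem:basis_z_to_curves} realizes a symplectic basis of $H_1(S_2;\Z)$ by simple closed curves but gives no control over \emph{which} curve realizes the first class, whereas here the first curve must be exactly $c$ (and exactly $d$). I would obtain this relative version by cutting $S_2$ along $c$: since $\hat i([c],v_1^c)=1$, the class $v_1^c$ is realized by a simple arc in the twice-holed torus $S_2\setminus c$ joining its two boundary circles, and cutting further along that arc leaves a once-holed torus in which the residual symplectic pair $\{u_2^c,v_2^c\}$ is realized by curves (the genus-one case of Lemma~\ref{lem:basis_z_to_curves}); reassembling produces the desired geometric symplectic basis through $c$. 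This relative realization is equivalent to the well-known transitivity of the $\I(S_2)$-action on isotopy classes of nonseparating simple closed curves of a fixed homology class, or to the surjectivity of $\stab_{\map(S_2)}(c)\to\stab_{\symp(4,\Z)}([c])$; granting it, the rest of the argument is routine bookkeeping with Lemmas~\ref{lem:modk_homo_to_zhomo}--\ref{lem:basis_z_to_curves} and the change of coordinates principle.
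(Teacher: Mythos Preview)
Your proposal is correct and follows essentially the same approach as the paper: extend $[c]_k=[d]_k$ to a symplectic basis of $H_1(S_2;\Z_k)$, lift it via Lemmas~\ref{lem:modk_homo_to_zhomo}--\ref{lem:basis_modk_to_z} to integral symplectic bases with first vectors $[c]$ and $[d]$, realize these by geometric symplectic bases with first curves $c$ and $d$, and then apply the change of coordinates principle. The paper simply invokes Lemma~\ref{lem:basis_z_to_curves} for the relative realization step you single out as the main obstacle, so your cut-along-$c$ justification is in fact more careful than the paper's own treatment.
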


\begin{proof}
For $x_1=[c]_k$, let $B=\{x_1,y_1,x_2,y_2\}$ be a symplectic basis for $H_1(S_2;\Z_k)$. By Lemmas~\ref{lem:modk_homo_to_zhomo}-\ref{lem:basis_modk_to_z}, there exist symplectic bases $ZB_1=\{u_1=[c],v_1,u_2,v_2\}$ and $ZB_2=\{u_1'=[d],v_1',u_2',v_2'\}$ realizing $B$. By Lemma~\ref{lem:basis_z_to_curves}, there exist geometric symplectic basis $C=\{c=a_1,b_1,a_2,b_2\}$ and $D=\{d=a_1',b_1',a_2',b_2'\}$ of $H_1(S_2;\Z)$ realizing $ZB_1$ and $ZB_2$. By classification of surfaces, there exists $F\in \map(S_2)$ such that $F(C)=D$ and $F(c)=d$, where $C$ and $D$ are considered ordered. Since $\Psi_k(F)$ fixes $B$, we have $F\in \map(S_2)[k]$.      
\end{proof}

\begin{cor}
\label{cor:equal_modk_edges}
Let $\{c_1,c_2\},\{d_1, d_2\}\in E(\N(S_2))$ such that $[c_i]_k=[d_i]_k$, where $i=1,2$. Then there exists $F\in \map(S_2)[k]$ such that $F$ maps $\{c_1,c_2\}$ to $\{d_1, d_2\}$. 
\end{cor}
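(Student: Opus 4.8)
The plan is to promote the proof of Corollary~\ref{cor:equal_modk_vertex} from a single curve to a pair of curves meeting once. Since the edges $\{c_1,c_2\}$ and $\{d_1,d_2\}$ consist of unoriented curves, I would first fix orientations of $c_1,c_2,d_1,d_2$ so that $[c_i]_k=[d_i]_k$ for $i=1,2$ and, using $i(c_1,c_2)=i(d_1,d_2)=1$, so that in addition $\hat{i}([c_1],[c_2])=\hat{i}([d_1],[d_2])=1$ (for $k>2$ this sign is automatic; for $k=2$ reversing an orientation does not change the mod-$2$ class). Put $x_1:=[c_1]_k=[d_1]_k$ and $y_1:=[c_2]_k=[d_2]_k$; then $\hat{i}_k(x_1,y_1)=1$, so I extend $\{x_1,y_1\}$ to a symplectic basis $B=\{x_1,y_1,x_2,y_2\}$ of $H_1(S_2;\Z_k)$.

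Next I would build two symplectic bases $ZB_1$ and $ZB_2$ of $H_1(S_2;\Z)$, both reducing to $B$ modulo $k$, with first two vectors $[c_1],[c_2]$ and $[d_1],[d_2]$ respectively. Since $\hat{i}([c_1],[c_2])=1$, the rank-$2$ submodule $\langle[c_1],[c_2]\rangle$ is unimodular, hence a direct summand, and $H_1(S_2;\Z)=\langle[c_1],[c_2]\rangle\oplus M$ with $M:=\langle[c_1],[c_2]\rangle^{\perp}$ a rank-$2$ symplectic module that reduces mod $k$ onto $\langle x_1,y_1\rangle^{\perp}=\langle x_2,y_2\rangle$. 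A straightforward adaptation of the arguments of Lemmas~\ref{lem:modk_homo_to_zhomo}-\ref{lem:basis_modk_to_z} to the rank-$2$ module $M$ in place of $H_1(S_2;\Z)$ yields $u_2,v_2\in M$ with $\hat{i}(u_2,v_2)=1$, $u_2\equiv x_2$ and $v_2\equiv y_2\pmod k$; then $ZB_1:=\{[c_1],[c_2],u_2,v_2\}$ is a symplectic basis of $H_1(S_2;\Z)$ reducing to the ordered basis $B$, and $ZB_2:=\{[d_1],[d_2],u_2',v_2'\}$ is obtained identically.

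Finally I would realize $ZB_1$ by a geometric symplectic basis whose first two curves are literally $c_1,c_2$: a regular neighborhood of $c_1\cup c_2$ is a one-holed torus, its complementary one-holed torus $T'$ has $H_1(T';\Z)$ mapping isomorphically onto $M$, and since $\map(T')$ surjects onto $\symp(2,\Z)=\Sl(2,\Z)$ (the genus-one analogue of Lemma~\ref{lem:basis_z_to_curves}), there are simple closed curves $a_2,b_2\subset T'$ with $[a_2]=u_2$, $[b_2]=v_2$, $i(a_2,b_2)=1$; being contained in $T'$, they are disjoint from $c_1$ and $c_2$, so $C=\{c_1,c_2,a_2,b_2\}$ is a geometric symplectic basis realizing $ZB_1$, and likewise $D=\{d_1,d_2,a_2',b_2'\}$ realizes $ZB_2$. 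By the classification of surfaces (as in the proof of Corollary~\ref{cor:equal_modk_vertex}) there is $F\in\map(S_2)$ with $F(C)=D$ as ordered tuples, whence $F(c_1)=d_1$ and $F(c_2)=d_2$, so $F$ carries the edge $\{c_1,c_2\}$ to $\{d_1,d_2\}$; and since $F_{\#}$ carries the ordered basis $ZB_1$ to $ZB_2$, both of which reduce to $B$, we get $\Psi_k(F)=\mathrm{id}$, i.e.\ $F\in\map(S_2)[k]$. The step needing the most care is the middle one: verifying that reduction modulo $k$ commutes with passing to the symplectic orthogonal complement of a unimodular summand, and that the lifting arguments of Lemmas~\ref{lem:modk_homo_to_zhomo}-\ref{lem:basis_modk_to_z} carry over to the rank-$2$ module $M$; by contrast, the orientation bookkeeping in the first step (notably when $k=2$) and the concluding change of coordinates in the last step are routine.
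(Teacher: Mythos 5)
Your proposal is correct and follows essentially the same route as the paper: extend $\{[c_1]_k,[c_2]_k\}$ to a symplectic basis of $H_1(S_2;\Z_k)$, lift it to integral symplectic bases beginning with $[c_1],[c_2]$ and $[d_1],[d_2]$, realize these by geometric symplectic bases whose first two curves are literally $c_1,c_2$ (resp.\ $d_1,d_2$), and apply the change-of-coordinates principle, noting that the resulting $F$ fixes the mod-$k$ basis and hence lies in $\map(S_2)[k]$. The only difference is that you spell out the steps the paper leaves implicit (the orientation/sign bookkeeping, lifting $x_2,y_2$ inside the symplectic complement of the unimodular summand $\langle[c_1],[c_2]\rangle$, and realizing $u_2,v_2$ by curves in the complementary one-holed torus), which is a welcome sharpening rather than a different argument.
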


\begin{proof}
For $x_1=[c_1]_k$ and $y_1=[c_2]_k$, let $B=\{x_1,y_1,x_2,y_2\}$ be a symplectic basis of $H_1(S_2,\Z_k)$. By Lemmas~\ref{lem:modk_homo_to_zhomo}-\ref{lem:basis_modk_to_z}, there exist symplectic bases $ZB_1=\{[c_1],[c_2],u_2,v_2\}$ and \linebreak $ZB_2=\{[d_1],[d_2],u_2',v_2'\}$ of $H_1(S_2;\Z)$ realizing $B$. By Lemma~\ref{lem:basis_z_to_curves}, there exist geometric symplectic bases $C=\{c_1,c_2,a_2,b_2\}$ and $D=\{d_1,d_2,a_2',b_2'\}$ realizing $ZB_1$ and $ZB_2$, respectively. By classification of surfaces, there exists $F \in \map(S_2)$ such that $F(C)=D$, where $C$ and $D$ are considered ordered. Since $\Psi_k(F)$ fixes $B$, we have $F \in \map(S_2)[k]$ and maps $\{c_1,c_2\}$ to $\{d_1,d_2\}$.
\end{proof}

\begin{lem}
\label{lem:graph_corres}
The graphs $\overline{\N(S_2)}$ and $\overline{\N_k(S_2)}$ are isomorphic.
\end{lem}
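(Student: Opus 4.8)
The plan is to construct an explicit graph isomorphism $\Phi\colon\overline{\N(S_2)}\to\overline{\N_k(S_2)}$ induced by reduction of homology mod $k$. A vertex of $\overline{\N(S_2)}$ is an orbit $\lmap_p(S_2)\cdot c$ with $c\in V(\N(S_2))$, and an edge is an orbit $\lmap_p(S_2)\cdot\{c_1,c_2\}$ with $\{c_1,c_2\}\in E(\N(S_2))$; likewise for $\overline{\N_k(S_2)}$ with $\Psi_k(\lmap_p(S_2))$ acting on $\N_k(S_2)$. I would set
\[
\Phi\bigl(\lmap_p(S_2)\cdot c\bigr):=\Psi_k(\lmap_p(S_2))\cdot[c]_k,
\qquad
\Phi\bigl(\lmap_p(S_2)\cdot\{c_1,c_2\}\bigr):=\Psi_k(\lmap_p(S_2))\cdot\{[c_1]_k,[c_2]_k\}.
\]
Since $[F(c)]_k=\Psi_k(F)[c]_k$ for every $F\in\map(S_2)$, both assignments are well defined, and $\Phi$ visibly preserves incidence: the image of an edge orbit is the edge orbit of the pair of images of its endpoints.

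First I would prove injectivity of $\Phi$. If $\Phi(\lmap_p(S_2)\cdot c)=\Phi(\lmap_p(S_2)\cdot d)$, there is $G\in\lmap_p(S_2)$ with $\Psi_k(G)[c]_k=[d]_k$, that is, $[G(c)]_k=[d]_k$. By Corollary~\ref{cor:equal_modk_vertex} there is $F\in\map(S_2)[k]$ with $F(G(c))=d$, and by Theorem~\ref{thm:sym_criteria} we have $\map(S_2)[k]\subset\lmap_p(S_2)$; hence $FG\in\lmap_p(S_2)$ carries $c$ to $d$, so the two vertex orbits coincide. The identical argument with Corollary~\ref{cor:equal_modk_edges} in place of Corollary~\ref{cor:equal_modk_vertex} shows that $\Phi$ is injective on edge orbits.

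Next I would establish surjectivity. Every vertex of $\N_k(S_2)$ is a primitive $x\in\Z_k^{4}$, and running the construction in the proof of Corollary~\ref{cor:equal_modk_vertex} (complete $x$ to a symplectic basis of $H_1(S_2;\Z_k)$, lift it to a symplectic basis of $H_1(S_2;\Z)$ via Lemma~\ref{lem:basis_modk_to_z}, and realize the lift by simple closed curves via Lemma~\ref{lem:basis_z_to_curves}) produces $c\in V(\N(S_2))$ with $[c]_k=x$, so $\Phi$ sends the orbit of $c$ to the orbit of $x$. Likewise, an edge of $\N_k(S_2)$ has endpoints $x,y$ with $\hat i_k(x,y)=\pm1$, and the construction in the proof of Corollary~\ref{cor:equal_modk_edges} produces $\{c_1,c_2\}\in E(\N(S_2))$ with $[c_1]_k=x$ and $[c_2]_k=y$. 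Hence $\Phi$ is a bijection on vertices and on edges; being incidence preserving with an incidence-preserving inverse, it is a graph isomorphism.

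The point requiring the most care is the surjectivity onto edges: one must check that adjacency in the quotient graph $\N_k(S_2)$ is exactly the condition $\hat i_k(x,y)=\pm1$ on the endpoints (equivalently, that no loops arise and that an edge downstairs forces a symplectic pair mod $k$), and that such a pair of mod-$k$ homology classes can be simultaneously realized by a pair of simple closed curves meeting once. Both facts are furnished by the chain of Lemmas~\ref{lem:modk_homo_to_zhomo}--\ref{lem:basis_z_to_curves}, together with the elementary observation that a symplectic pair in $H_1(S_2;\Z_k)$ extends to a symplectic basis; so the proof reduces to assembling those ingredients with the two corollaries above.
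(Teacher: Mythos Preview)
Your proof is correct and follows essentially the same approach as the paper's: both define the natural map induced by $c\mapsto[c]_k$, verify well-definedness, and use Corollaries~\ref{cor:equal_modk_vertex} and~\ref{cor:equal_modk_edges} together with $\map(S_2)[k]\subset\lmap_p(S_2)$ for injectivity on vertex and edge orbits. The only minor difference is your treatment of surjectivity: since $\N_k(S_2)$ is \emph{defined} as the quotient of $\N(S_2)$ under the relation $[c_1]_k=[c_2]_k$, every vertex and edge of $\N_k(S_2)$ is by construction the image of one in $\N(S_2)$, so surjectivity of $\Phi$ is immediate and you need not invoke Lemmas~\ref{lem:modk_homo_to_zhomo}--\ref{lem:basis_z_to_curves} again (the paper simply says this step is ``clear'').
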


\begin{proof}
Define the map $\psi:V(\overline{\N(S_2)})\to V(\overline{\N_k(S_2)})$ as $\psi(O_c)=O_{[c]_k}$ for $c\in V(\N(S_2))$, where $O_c$ denote the orbit of the vertex $c\in V(\N(S_2))$ under the action $\lmap_p(S_2)$ on $\N(S_2)$. For $O_c=O_d$, there exists $F\in \lmap_p(S_2)$ such that $F(c)=d$. Then $\Psi_k(F)[c]_k=[d]_k$, that is, $O_{[c]_k}=O_{[d]_k}$. This shows that the map $\psi$ is well-defined.

First, we show that $\psi$ is bijective. Since the vertices of $\N_k(S_2)$ are identified with primitive elements of $\Z_k^{2h}$, $\psi$ is surjective. For $O_c,O_d\in V(\overline{\N(S_2)})$, let $\psi(O_c)=\psi(O_d)$, that is, $O_{[c]_k}=O_{[d]_k}$. Then there exists a $F\in \lmap_p(S_2)$ such that $\Psi_k(F)[c]_k=[d]_k$. Since $[F(c)]_k=[d]_k$, by Corollary~\ref{cor:equal_modk_vertex}, there exists $F'\in \map(S_2)[k]$ such that $F'F(c)=d$. Since $F'F\in \lmap_p(S_2)$, we have $O_c=O_d$, and consequently, $\psi$ is injective.

Now we show that $\psi$ induces a bijection $\theta:E(\overline{\N(S_2)})\to E(\overline{\N_k(S_2)})$ given by $\theta(O_{\{c,d\}})=O_{\{[c]_k,[d]_k\}}$, where $O_{\{c,d\}}$ denote the orbit of the edge $\{c,d\}\in E(\N(S_2))$ under the action of $\lmap_p(S_2)$ on $\N(S_2)$. As before, it is clear that $\theta$ is a well-defined surjective map. We show that $\theta$ is injective. For $\{c_1,c_2\},\{d_1,d_2\}\in E(\N(S_2))$, let $O_{\{[c_1]_k,[c_2]_k\}}=O_{\{[d_1]_k,[d_2]_k\}}$. There exists $F\in \lmap_p(S_2)$ such that $\Psi_k(F)\{[c_1]_k,[c_2]_k\}=\{[d_1]_k,[d_2]_k\}$, that is, $\{[F(c_1)]_k,[F(c_2)]_k\}=\{[d_1]_k,[d_2]_k\}$. Since $i(c_1,c_2)=i(d_1,d_2)=1$, by Corollary~\ref{cor:equal_modk_edges}, there exists $F'\in \map(S_2)[k]$ such that $F'$ maps $\{F(c_1),F(c_2)\}$ to $\{d_1,d_2\}$. Since $F'F\in \lmap_p(S_2)$ and $F'F$ maps $\{c_1,c_2\}$ to $\{d_1,d_2\}$, we have $O_{\{c_1,c_2\}}=O_{\{d_1,d_2\}}$, that is, $\theta$ is injective. Hence $\overline{\N(S_2)}$ and $\overline{\N_k(S_2)}$ are isomorphic.
\end{proof}

\subsection{Stabilizer of a nonseparating simple closed curve in $\lmap_p(S_h)$}
For a group $G$ acting on a set $X$, the stabilizer of $x\in X$ in $G$ will be denoted by $\stab_G(x)$. In this subsection, for an abelian cover $p$ of $S_h$, we describe the stabilizer $\stab_{\lmap_p(S_h)}(\gamma)$ of $\gamma\in V(\N(S_h))$ in $\lmap_p(S_h)$. Since $[\stab_{\map(S_h)}(\gamma):\stab_{\lmap_p(S_h)}(\gamma)]\leq [\map(S_h):\lmap_p(S_h)]<\infty$, $\stab_{\lmap_p(S_h)}(\gamma)$ is finitely generated. We have the following short-exact sequence:
\[
1 \to \stab_{\I(S_h)}(\gamma)\to \stab_{\map(S_h)}(\gamma)\to \stab_{\symp(2h,\Z)}(\pm[\gamma])\to 1.
\]  
Since $\I(S_h)\subset \lmap_p(S_h)$, we have
\begin{equation}
\label{ses:stab_lmod}
1 \to \stab_{\I(S_h)}(\gamma) \to \stab_{\lmap_p(S_h)}(\gamma) \to \stab_{\Psi(\lmap_p(S_h))}(\pm [\gamma]) \to 1.
\end{equation}
By the symplectic criterion of $\Psi(\lmap_p(S_h))$, one can construct a finite generating set for \linebreak $\stab_{\Psi(\lmap_p(S_h))}(\pm [\gamma])$, and thus, we describe a generating set for the group $\stab_{\I(S_h)}(\gamma)$. Let $S_{h,\gamma}$ be the closure of the surface obtained by cutting $S_h$ along $\gamma$, and for the resultant boundary components $\gamma^1$ and $\gamma^2$ (see Figure~\ref{fig:s2e_embed_s2}), we have the following short-exact sequence~\cite[Theorem 3.18]{primer}:
\[
1\to \langle T_{\gamma^1}T_{\gamma^2}^{-1} \rangle \to \map(S_{h,\gamma})\to \stab_{\map(S_h)}(\gamma)\to 1.
\]
Let $\I(S_{h,\gamma})$ be the kernel of the composition $\map(S_{h,\gamma})\to \stab_{\map(S_h)}(\gamma)\to \symp(2h,\Z)$. Thus, we have the following short-exact sequence:
\[
1\to \langle T_{\gamma^1}T_{\gamma^2}^{-1} \rangle \to \I(S_{h,\gamma})\to \stab_{\I(S_h)}(\gamma)\to 1.
\]
Let $S$ be a closed subsurface contained in $S_{h,\gamma}$ such that $S_{h,\gamma}\setminus \text{Int}(S)$ is a pair of pants (see Figure~\ref{fig:s2e_embed_s2}), where $\text{Int}(S)$ is the interior of $S$. Let $\widehat{S_{h,\gamma}}$ be the surface obtained from $S_{h,\gamma}$ by capping one of the boundary components by a closed disk. Let $\I(S)$ be the Torelli group of surface $S$. The following result due to Putman~\cite[Theorem 4.1]{putman07} describes the group $\I(S_{h,\gamma})$.

\begin{theorem}
\label{thm:stab_genset}
We have $\I(S_{h,\gamma})=[\pi,\pi]\rtimes \I(S)$, where $\pi=\pi_1(\widehat{S_{h,\gamma}})$. Furthermore, the surjection $\I(S_{2,\gamma})\to \stab_{\I(S_2)}(\gamma)$ restricts to an isomorphism $[\pi,\pi]\to \stab_{\I(S_2)}(\gamma)$.
\end{theorem}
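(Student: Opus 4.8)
The plan is to realize $\I(S_{h,\gamma})$ as an extension of the Torelli group of $\widehat{S_{h,\gamma}}$ obtained from the capping homomorphism, and then to split that extension using the subsurface $S$. Say $\widehat{S_{h,\gamma}}$ is obtained by capping the boundary component $\gamma^2$, so it has the single boundary component $\gamma^1$ and $\pi=\pi_1(\widehat{S_{h,\gamma}})$ is free of rank $2(h-1)$; let $\kappa\colon\map(S_{h,\gamma})\to\map(\widehat{S_{h,\gamma}})$ denote the resulting capping homomorphism. I would begin by recalling the standard disk-pushing exact sequence: $\kappa$ is surjective with kernel the disk-pushing subgroup $\mathrm{Disk}(\gamma^2)\cong\pi_1(UT\widehat{S_{h,\gamma}})\cong\pi\times\langle T_{\gamma^2}\rangle$, in which $\pi$ is generated by the disk-pushes $P_c$ ($c\in\pi$) and the central $\langle T_{\gamma^2}\rangle$ by the boundary twist.

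The core of the argument is a homology computation for the composite $\map(S_{h,\gamma})\to\stab_{\map(S_h)}(\gamma)\to\symp(2h,\Z)$ induced by gluing $\gamma^1$ to $\gamma^2$. I would first check that $\kappa\bigl(\I(S_{h,\gamma})\bigr)=\I(\widehat{S_{h,\gamma}})$: the gluing realizes $H_1(\widehat{S_{h,\gamma}};\Z)$ as a quotient of the subgroup $[\gamma]^{\perp}\subset H_1(S_h;\Z)$, so any element of $\I(S_{h,\gamma})$ has $\kappa$-image acting trivially on $H_1(\widehat{S_{h,\gamma}};\Z)$; conversely, a lift of an element of $\I(\widehat{S_{h,\gamma}})$ acts on $H_1(S_h;\Z)$ trivially on $[\gamma]^{\perp}$ and sends a class $[d]$ dual to $[\gamma]$ to $[d]+v$ with $v\in[\gamma]^{\perp}$ (by invariance of the intersection form), and it can be corrected into $\I(S_{h,\gamma})$ by a suitable product of disk-pushes and a power of $T_{\gamma^2}$, all of which lie in $\ker\kappa$. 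I would then compute $\ker\bigl(\kappa|_{\I(S_{h,\gamma})}\bigr)=\I(S_{h,\gamma})\cap\mathrm{Disk}(\gamma^2)$ by analysing how $P_c$ and $T_{\gamma^2}$ act on $H_1(S_h;\Z)$ after gluing: $T_{\gamma^2}$ acts as a nontrivial transvection along $[\gamma]$, while $P_c$ lies in the Torelli group exactly when $[c]=0$ in $H_1(\widehat{S_{h,\gamma}};\Z)$, that is, $c\in[\pi,\pi]$; one concludes that this intersection is isomorphic to $[\pi,\pi]$. Together these yield the short exact sequence
\[
1\longrightarrow[\pi,\pi]\longrightarrow\I(S_{h,\gamma})\xrightarrow{\ \kappa\ }\I(\widehat{S_{h,\gamma}})\longrightarrow 1 .
\]

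To split this sequence I would use that the inclusion $S\hookrightarrow S_{h,\gamma}$ (extending homeomorphisms by the identity on the complementary pair of pants $S_{h,\gamma}\setminus\mathrm{Int}(S)$) induces $\map(S)\hookrightarrow\map(S_{h,\gamma})$, and that capping $\gamma^2$ turns that pair of pants into a collar annulus of $\partial S$; hence the composite $\map(S)\hookrightarrow\map(S_{h,\gamma})\xrightarrow{\kappa}\map(\widehat{S_{h,\gamma}})$ is the isomorphism induced by gluing on a collar, and it restricts to an isomorphism $\I(S)\xrightarrow{\ \sim\ }\I(\widehat{S_{h,\gamma}})$. Thus $\I(S)\hookrightarrow\I(S_{h,\gamma})$ is a section of $\kappa$ and $\I(S_{h,\gamma})=[\pi,\pi]\rtimes\I(S)$, which is the first assertion. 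For the second, I would specialize to $h=2$, where $S=S_{1,1}$ and $\I(S)=\langle T_\delta\rangle\cong\Z$ with $\delta=\partial S$. In view of the exact sequence $1\to\langle T_{\gamma^1}T_{\gamma^2}^{-1}\rangle\to\I(S_{2,\gamma})\to\stab_{\I(S_2)}(\gamma)\to 1$, it is enough to show that $\langle T_{\gamma^1}T_{\gamma^2}^{-1}\rangle$ is a complement to $[\pi,\pi]$ in $\I(S_{2,\gamma})=[\pi,\pi]\rtimes\langle T_\delta\rangle$. Since $\kappa|_{\I(S_{2,\gamma})}$ is exactly the quotient by $[\pi,\pi]$ and $\kappa(T_{\gamma^1}T_{\gamma^2}^{-1})=T_{\gamma^1}$ corresponds under $\I(S)\cong\I(\widehat{S_{2,\gamma}})$ to the generator $T_\delta$, which has infinite order, the subgroup $\langle T_{\gamma^1}T_{\gamma^2}^{-1}\rangle$ meets $\ker\kappa=[\pi,\pi]$ trivially and surjects onto $\I(S_{2,\gamma})/[\pi,\pi]$; this gives the desired isomorphism $[\pi,\pi]\xrightarrow{\ \sim\ }\stab_{\I(S_2)}(\gamma)$.

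The hard part will be the homology computation of the second step: precisely describing the action of a disk-push $P_c$ on $H_1(S_h;\Z)$ once $\gamma^1$ and $\gamma^2$ are glued, verifying that it is trivial exactly when $c\in[\pi,\pi]$, and checking that no nontrivial combination $P_c\,T_{\gamma^2}^m$ survives in the Torelli group. This is where the interaction among the gluing, the symplectic form on $H_1(S_h;\Z)$, and the commutator structure of $\pi$ is genuinely used; by comparison, the capping exact sequence and the splitting are essentially formal once this is in hand.
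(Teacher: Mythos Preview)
The paper does not supply its own proof of this statement: it is quoted as a result of Putman \cite[Theorem 4.1]{putman07} and used as a black box. Your outline is essentially a reconstruction of Putman's argument via the capping/Birman exact sequence, and it is sound.

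One small imprecision worth flagging: in your surjectivity step you assert that a lift of an element of $\I(\widehat{S_{h,\gamma}})$ to $\map(S_{h,\gamma})$ acts trivially on $[\gamma]^{\perp}\subset H_1(S_h;\Z)$. This is not quite right: since $H_1(\widehat{S_{h,\gamma}};\Z)\cong[\gamma]^{\perp}/\langle[\gamma]\rangle$, such a lift a priori acts on $[\gamma]^{\perp}$ only as the identity \emph{modulo} $\langle[\gamma]\rangle$, i.e.\ by $x\mapsto x+\phi(x)[\gamma]$ for some homomorphism $\phi$. Fortunately this does no damage to your argument, because your splitting step already yields surjectivity directly: once you know $\I(S)\hookrightarrow\I(S_{h,\gamma})$ and $\kappa$ carries $\I(S)$ isomorphically onto $\I(\widehat{S_{h,\gamma}})$, the map $\kappa|_{\I(S_{h,\gamma})}$ is automatically onto. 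So the separate surjectivity discussion is redundant and can simply be dropped. The genuinely substantive computation, as you correctly identify, is showing that $(\pi\times\langle T_{\gamma^2}\rangle)\cap\I(S_{h,\gamma})=[\pi,\pi]$; the rest is formal.
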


\begin{rem}
\label{rem:johnson_tom_result}
We note that to write a generating set for $\I(S_{h,\gamma})$, one needs a generating set for $\I(S)$ and $[\pi,\pi]$ both. Due to Johnson~\cite{johnson83}, a finite generating set for $\I(S)$ is known when the genus of $S$ is at least three. Furthermore, due to a result of Tomaszewski~\cite{tom03}, a generating set for $[\pi,\pi]$ is also known. A geometric proof of Tomaszewski's result is given by Putman~\cite[Theorem A]{putman22}.
\end{rem}

\subsection{The algorithm}
\label{sec:algo}
We now describe our algorithm.
\begin{algo}
\label{algo}
For $g\geq 2$, given an abelian cover $p:S_g \to S_h$, we follow the following steps to derive a generating set for $\lmap_p(S_h)$. Let $k$ be the smallest positive integer such that $\map(S_h)[k]\subset \lmap_p(S_h)$.
\begin{enumerate}[Step 1:]
\item Use the symplectic criterion (see Theorem~\ref{thm:sym_criteria} and Remark~\ref{rem:sym_criteria}) to get the description of symplectic matrices of $\Psi_k(\lmap_p(S_h))$.

\item For $\lmap_p(S_h)$-action on the curve graph $\N(S_h)$, compute the structure of the finite graph $\overline{\N_k(S_h)}$.

\item Choose one representative in $\N(S_h)$ for each vertex and edge orbit of $\Psi_k(\lmap_p(S_h))$-action on $\N_k(S_h)$ (see Theorem \ref{prop:curve_homology_cor}).
\label{algostep:choose_curve}

\item Use Theorem \ref{thm:stab_genset} and Remark~\ref{rem:johnson_tom_result} to construct a generating set of $\stab_{\I(S_h)}(\gamma)$ for each $\gamma$ chosen in Step \ref{algostep:choose_curve}.

\item Use the symplectic criterion to construct a generating set for $\stab_{\Psi(\lmap_p(S_h))}(\pm [\gamma])$ for each $\gamma$ chosen in Step \ref{algostep:choose_curve}.

\item Use the short-exact sequence~\ref{ses:stab_lmod} to write a generating set for $\stab_{\lmap_p(S_h)}(\gamma)$ for each $\gamma$ chosen in Step \ref{algostep:choose_curve}.

\item Use Theorem~\ref{thm:genset_gpaction_graph} to obtain a generating set for $\lmap_p(S_h)$ which is a union of \linebreak $\stab_{\lmap_p(S_h)}(\gamma)$, for each $\gamma$ chosen in Step~\ref{algostep:choose_curve}, and finitely many elements corresponding to edge orbits of $\lmap_p(S_h)$-action on $\N(S_h)$.
\label{algostep:final}

\item From the generating set obtained in Step~\ref{algostep:final}, use relations in $\map(S_h)$ to obtain a finite generating set for $\lmap_p(S_h)$.
\label{algostep_finitegen}

\item By lifting the generators of $\lmap_p(S_h)$ under the map $\varphi:\smap_p(S_g)\to \lmap_p(S_h)$ obtained in Step~\ref{algostep_finitegen}, derive a finite generating set for the normalizer of the Deck group.
\end{enumerate}
\end{algo}

\section{Applications of the algorithm}
\label{sec:app}
\subsection{A finite generating set for $\lmap_k(S_2)$}
\label{subsec:cyclic_liftable}
For $g\geq 2$, consider the cover $p_k:S_{k(g-1)+1}\to S_g$. The liftable mapping class group of cover $p_k$ will be denoted by $\lmap_k(S_g)$. In~\cite[Theorem 3.9]{dhanwani21}, a finite generating set for $\lmap_k(S_g)$ has been obtained for $g\geq 3$. Furthermore, a finite generating set for $\lmap_2(S_2)$ has also been derived~\cite[Corollary 5.5]{dhanwani21}. In this subsection, using Algorithm~\ref{algo}, for a prime number $k$ we derive a finite generating set for $\lmap_k(S_2)$. As a consequence, we will recover the generating set of $\lmap_2(S_2)$ obtained in~\cite[Corollary 5.5]{dhanwani21} and give an explicit finite generating set for $\lmap_3(S_2)$.

For Step 1 of Algorithm~\ref{algo}, we must
derive the symplectic criterion for $\lmap_k(S_2)$. The following characterization of symplectic matrices was obtained in~\cite[Theorem 2.2]{dhanwani22}.

\begin{prop}
\label{prop:sym_mat_s2}
For $F\in \map(S_g)$, the following statements are equivalent.
\begin{enumerate}[(i)]
\item $F\in \lmap_k(S_g)$.
\item $\Psi(F)=(a_{ij})_{2g\times 2g}$, where $k\mid a_{2i}$, for $1\leq i\leq 2g$ and $i\neq 2$, and $\gcd(a_{22},k)=1$.
\item $\Psi_k(F)=(b_{ij})_{2g\times 2g}$, where $b_{2i}=0$, for $1\leq i\leq 2g$ and $i\neq 2$, and $b_{22}\in \Z_k^{\times}$.
\end{enumerate}
\end{prop}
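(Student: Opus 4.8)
The plan is to deduce the whole proposition from the general liftability criterion recorded in Subsection~\ref{subsec:symp_criteria}: for an abelian cover $p$ of $S_g$ with induced homology map $\eta':H_1(S_g;\Z)\to A$, one has $F\in\lmap_p(S_g)$ if and only if $\Psi(F)(\ker\eta')=\ker\eta'$. So the first step would be to fix a convenient surface kernel map for the cyclic cover $p_k:S_{k(g-1)+1}\to S_g$. Reading it off the $2\pi/k$-rotation picture of Figure~\ref{fig:free_cover} (or invoking the classification of free cyclic actions on $S_g$), one obtains a representative $\eta:\pi_1(S_g)\to\Z_k$ that sends the standard generator $\beta_1$ to a generator of $\Z_k$ and kills every other standard generator; a count of Euler characteristics confirms that this has covering degree $k$. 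In the basis $e_1=a_1,\ e_2=b_1,\ e_3=a_2,\ e_4=b_2,\dots$ of $H_1(S_g;\Z)\cong\Z^{2g}$ used in Subsection~\ref{subsec:symp_criteria}, the induced map is $\eta'(e_2)=1$ and $\eta'(e_i)=0$ for $i\neq 2$, so that
\[
\ker\eta'=\langle e_1,\ ke_2,\ e_3,e_4,\dots,e_{2g}\rangle ,
\]
an index-$k$ sublattice of $\Z^{2g}$.

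Next I would prove $(i)\Leftrightarrow(ii)$. Since $\Psi(F)$ is an automorphism of $\Z^{2g}$, it carries the index-$k$ sublattice $\ker\eta'$ onto a sublattice of the same index, so the equality $\Psi(F)(\ker\eta')=\ker\eta'$ is equivalent to the mere inclusion $\Psi(F)(\ker\eta')\subseteq\ker\eta'$, i.e.\ to $\eta'\circ\Psi(F)$ annihilating $\ker\eta'$, i.e.\ to $\eta'\circ\Psi(F)$ factoring through $\eta'$. As $\operatorname{Hom}(\Z_k,\Z_k)\cong\Z_k$, this factorization reads $\eta'\circ\Psi(F)=u\,\eta'$ for some $u\in\Z_k$; and since $\eta'\circ\Psi(F)$ is surjective, $u$ must lie in $\Z_k^{\times}$. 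Writing $\Psi(F)=(a_{ij})$ and evaluating on basis vectors gives $\eta'\bigl(\Psi(F)e_j\bigr)=\sum_i a_{ij}\,\eta'(e_i)\equiv a_{2j}\pmod k$, so the requirement that $\eta'\circ\Psi(F)=u\,\eta'$ with $u$ a unit becomes exactly: $a_{2j}\equiv 0\pmod k$ for all $j\neq 2$, and $a_{22}\equiv u\pmod k$ with $\gcd(a_{22},k)=1$. This is statement $(ii)$.

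Finally, $(ii)\Leftrightarrow(iii)$ would follow at once from the definition of the mod-$k$ reduction: writing $\Psi_k(F)=(b_{ij})$ with $b_{ij}\equiv a_{ij}\pmod k$, the condition $k\mid a_{2i}$ is the same as $b_{2i}=0$ in $\Z_k$, and $\gcd(a_{22},k)=1$ is the same as $b_{22}\in\Z_k^{\times}$.

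The step I expect to demand the most care is the very first one: pinning down that the cyclic cover $p_k$ corresponds, up to equivalence of covers (which only conjugates $\lmap_k(S_g)$), to the surface kernel map that is nonzero only on $\beta_1$, so that the distinguished homology class is precisely $e_2=b_1$ and the criterion comes out in the row-$2$ form stated in the proposition rather than in a form obtained from it by an unrecorded symplectic change of basis. Once that normalization is fixed, the rest is routine linear algebra over $\Z$ and $\Z_k$ in the chosen basis.
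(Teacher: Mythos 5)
Your argument is correct, but it is not the route the paper takes: the paper does not prove Proposition~\ref{prop:sym_mat_s2} at all, it simply imports it from \cite[Theorem 2.2]{dhanwani22}. What you do instead is re-derive it from the kernel criterion of Subsection~\ref{subsec:symp_criteria} ($F\in\lmap_p(S_g)$ if and only if $F_{\#}(\ker\eta')=\ker\eta'$), which is precisely the mechanism the paper itself uses to prove Theorem~\ref{thm:sym_criteria} for elementary abelian covers with $k$ prime. Your version buys two things: it works for every $k\geq 2$, not just prime $k$, and the equal-index observation (inclusion of $\Psi(F)(\ker\eta')$ in $\ker\eta'$ already forces equality, since $\Psi(F)\in\symp(2g,\Z)$ preserves the index) together with the factorization $\eta'\circ\Psi(F)=u\,\eta'$, with $u$ a unit by surjectivity, explains transparently where the condition $\gcd(a_{22},k)=1$ comes from — indeed it shows that condition is automatic once $k\mid a_{2j}$ for $j\neq 2$, because $\det\Psi_k(F)=1$ forces $b_{22}\in\Z_k^{\times}$. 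The one step you rightly single out as delicate is the normalization of the surface kernel map: Broughton's normal form in Theorem~\ref{thm:conj_elem_covers} sends $\alpha_1$ (class $e_1$) to the generator and would produce the criterion in row-$1$ form, whereas Proposition~\ref{prop:sym_mat_s2} is in row-$2$ form, so one must fix the convention that the rotation cover of Figure~\ref{fig:free_cover} corresponds to $\eta'$ nonzero exactly on $e_2=[b]$; this is consistent with the rest of the paper (for instance $[b]=e_2$ in the proof of Lemma~\ref{lem:stab_b_matrices}, and $T_b^k$ but not $T_b$ lies in $\lmap_k(S_2)$), and since all epimorphisms $\pi_1(S_g)\to\Z_k$ are equivalent this step is only a matter of pinning down the convention, as you say — but in a genuinely self-contained write-up you would need to verify it against the chosen figure and basis rather than leave it as an expectation.
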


For Step 2 of Algorithm~\ref{algo}, in the view of Lemma~\ref{lem:graph_corres}, we consider the action of $\Psi_k(\lmap_k(S_2))$ on $\N_k(S_2)$ to determine the finite graph $\overline{\N_k(S_2)}$. Let $\{e_1,e_2,e_3,e_4\}$ be the standard symplectic basis of $H_1(S_2;\Z_k)$ identified with $\Z_k^4$, where $e_j$ has $1$ at the $j^{th}$ coordinate and $0$ elsewhere. For $\ell\in \Z_k$, the multiplicative inverse of $\ell$ in $\Z_k$ will be denoted by $\bar{\ell}$.

\begin{lem}
\label{lem:vertices_modk_graph}
For a prime number $k$, the graph $\overline{\mathcal{N}_k(S_2)}$ has three vertices corresponding to orbits $O_{e_1}$, $O_{e_2}$, and $O_{e_3}$.
\end{lem}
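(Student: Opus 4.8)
The plan is to understand the orbits of the action of $\Psi_k(\lmap_k(S_2))$ on the primitive vectors of $\Z_k^4$, since by Lemma~\ref{lem:graph_corres} the vertices of $\overline{\N_k(S_2)}$ correspond precisely to these orbits. First I would use the symplectic characterization from Proposition~\ref{prop:sym_mat_s2}(iii): $\Psi_k(\lmap_k(S_2))$ consists of those $B=(b_{ij})\in\symp(4,\Z_k)$ with $b_{2i}=0$ for $i\neq 2$ and $b_{22}\in\Z_k^\times$; equivalently, the second row of $B$ is $(0,b_{22},0,0)$ with $b_{22}$ a unit. Dually (or by a direct check using the symplectic condition), this says exactly that these matrices preserve the hyperplane $e_2^{\perp}=\{v : \hat{i}_k(v,e_2)=0\}=\langle e_1,e_3,e_4\rangle$ and scale the coordinate functional dual to $e_2$ by a unit. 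So the invariant distinguishing the orbits of a primitive vector $v=(v_1,v_2,v_3,v_4)$ should be whether $v_2=0$ or $v_2\neq 0$; and in the case $v_2\ne 0$ one further has to check that scaling $v_2$ by units and translating $v$ within $\langle e_1,e_3,e_4\rangle$-directions suffices to normalize.

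The key steps, in order: (1) Show $O_{e_1}$ and $O_{e_2}$ are distinct and $O_{e_1}=O_{e_3}$ is false, i.e.\ produce the three claimed orbits are pairwise distinct. For $O_{e_2}$ versus $O_{e_1},O_{e_3}$, note $e_2\notin\langle e_1,e_3,e_4\rangle=e_2^\perp$ while $e_1,e_3\in e_2^\perp$, and this hyperplane is preserved; hence no element of $\Psi_k(\lmap_k(S_2))$ can carry $e_1$ or $e_3$ to $e_2$. For $O_{e_1}$ versus $O_{e_3}$: I would look for a finer invariant. Write $v=(v_1,v_2,v_3,v_4)$ with $v_2=0$; the reduction modulo the subspace spanned by $e_2$ lands in $H_1/\langle e_2\rangle$, but a cleaner invariant is the pairing with $e_1$: since the first row of $B\in\symp(4,\Z_k)$ is unconstrained by the criterion beyond the symplectic condition, I expect $e_1$ and $e_3$ to actually be in the \emph{same} orbit only if there is a matrix in the group sending one to the other — and here one should check that $e_1$ and $e_3$ are \emph{not} equivalent. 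The distinguishing feature: $\hat{i}_k(e_1,e_2)=0=\hat{i}_k(e_3,e_2)$, so that does not separate them; instead, $e_1$ is the unique basis vector \emph{symplectically dual} to the "rigidified" direction $e_2$, i.e.\ $\hat{i}_k(e_1,e_2)$... wait, that's $0$. Let me reconsider: the correct invariant is that $\Psi_k(\lmap_k(S_2))$ preserves not $e_2^\perp$ but rather acts on the quotient $\Z_k^4\big/\langle e_1,e_3,e_4\rangle\cong\Z_k e_2$ — no. I would instead argue directly: transitivity of $\Psi_k(\lmap_k(S_2))$ on $\{v: v_2=0,\ v \text{ primitive}\}$ would merge $O_{e_1}$ and $O_{e_3}$, contradicting the claim of three vertices, so the right statement must be that $\{v_2=0\}$ splits into two orbits, $O_{e_1}$ (primitive vectors in $\langle e_1,e_3,e_4\rangle$ that are "$e_1$-like") and... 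Actually the honest resolution, which I would carry out, is: (a) $v_2\neq 0$ gives one orbit $O_{e_2}$ — use the group to scale $v_2\mapsto 1$ and then clear $v_1,v_3,v_4$ using transvections in the group (elementary symplectic matrices whose second row stays $(0,1,0,0)$); (b) $v_2=0$ gives exactly two orbits, detected by whether $v$ pairs nontrivially with the fixed direction under a secondary invariant coming from the constraint that $b_{22}$ is a unit while the \emph{column} constraints are absent — concretely one checks $e_1$ and $e_3$ sit in different orbits because the subgroup preserves the flag $\langle e_2\rangle\subset e_2^{\perp}\subset \Z_k^4$ but the image of $v$ in $e_2^\perp/\langle?\rangle$ — I will need to pin this down.

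The main obstacle is exactly this last point: cleanly identifying the invariant that separates $O_{e_1}$ from $O_{e_3}$, and then verifying that within $\{v_2=0\}$ there are \emph{no other} orbits (so that, together with $O_{e_2}$, we get exactly three). The cleanest route I foresee: $\Psi_k(\lmap_k(S_2))$ preserves the symplectic subspace $V=\langle e_1,e_2\rangle$? No — it preserves $\langle e_1,e_3,e_4\rangle$ and the line $\langle e_2\rangle$ is \emph{not} preserved (nonzero $b_{12},b_{32},b_{42}$ are allowed), so $e_2$ can move within its coset. Thus the true invariant of a primitive $v$ is the pair (is $v_2=0$?; and if so, is $v\equiv 0$ in the further quotient $\langle e_1,e_3,e_4\rangle/\langle e_3,e_4\rangle\cong\Z_k e_1$, which \emph{is} preserved because the first coordinate functional transforms only by contributions from itself plus the $e_2$-direction which is zero on $e_2^\perp$... ) — yielding $v_2=0$ with $v_1\neq 0$ (orbit $O_{e_1}$) versus $v_2=0=v_1$ with $(v_3,v_4)\neq 0$ (orbit $O_{e_3}$). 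I would finish by exhibiting explicit elements of $\Psi_k(\lmap_k(S_2))$ (symplectic transvections compatible with the criterion) realizing transitivity within each of the three sets, e.g.\ $\Psi_k(T_a),\Psi_k(T_c),\Psi_k(T_d),\Psi_k(T_e)$ and suitable products, and by checking the three invariants above are genuinely preserved, which is a short computation with the symplectic form $\hat{i}_k$.
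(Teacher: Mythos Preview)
Your overall strategy---find invariants of the $\Psi_k(\lmap_k(S_2))$-action that partition primitive vectors into three classes---is reasonable and different from the paper's, which simply writes down explicit matrices carrying $e_1,e_2,e_3$ to an arbitrary vector in each claimed orbit. But your execution of the key step has a genuine error.

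You correctly identify that the row condition $b_{2i}=0$ for $i\neq 2$ means the hyperplane $\{v_2=0\}=\langle e_1,e_3,e_4\rangle$ is preserved (though note this is \emph{not} $e_2^{\perp}$: under the standard form $\hat i_k(e_1,e_2)=1$, so $e_2^{\perp}=\langle e_2,e_3,e_4\rangle$; you also write $\hat i_k(e_1,e_2)=0$ at one point, which is wrong). The real gap is your proposed secondary invariant. You claim that within $\{v_2=0\}$ the orbit $O_{e_1}$ consists of vectors with $v_1\neq 0$ and $O_{e_3}$ of those with $v_1=0$, justified by saying the quotient $\langle e_1,e_3,e_4\rangle/\langle e_3,e_4\rangle$ is preserved. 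It is not: $\langle e_3,e_4\rangle$ is \emph{not} invariant (nothing forces $b_{13}=b_{14}=0$, and indeed the paper's Case~2 matrix has $b_{13}=i_1$ arbitrary). The correct orbit structure is the opposite: $O_{e_1}=\{(v_1,0,0,0):v_1\neq 0\}$ is just the punctured line, while $O_{e_3}=\{(v_1,0,v_3,v_4):(v_3,v_4)\neq(0,0)\}$ regardless of $v_1$.

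The invariant you want is that the \emph{line} $\langle e_1\rangle$ is preserved. This follows from the symplectic relations: with second row $(0,b_{22},0,0)$, the conditions $\hat i_k(Be_1,Be_3)=\hat i_k(Be_1,Be_4)=0$ read $b_{31}b_{43}-b_{41}b_{33}=0$ and $b_{31}b_{44}-b_{41}b_{34}=0$; since $b_{33}b_{44}-b_{43}b_{34}=\hat i_k(Be_3,Be_4)=1$, these force $b_{31}=b_{41}=0$, so $Be_1\in\langle e_1\rangle$. With this in hand your invariant-based approach would go through, but as written the proposal identifies the wrong subspace and hence the wrong orbit decomposition.
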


\begin{proof}
Let $x=(i_1,i_2,i_3,i_4)\in \Z_k^4$ be a primitive vector. Since $x$ is primitive, not all $i_j$'s are zero. We use the description of symplectic matrices in $\lmap_k(S_2)$ described in Proposition~\ref{prop:sym_mat_s2}.

\textbf{Case 1:} $i_2\neq 0$. For the matrix
\[
A=\begin{pmatrix}
\bar{i_2} & i_1 & -\bar{i_2}i_4 & \bar{i_2}i_3 \\ 
0 & i_2 & 0 & 0 \\ 
0 & i_3 & 1 & 0 \\ 
0 & i_4 & 0 & 1
\end{pmatrix},
\]
we have $A\in \Psi_k(\lmap_k(S_2))$ and $Ae_2=x$.

\textbf{Case 2:} $i_2=0$ and $i_3\neq 0$. In this case, for
\[
A=\begin{pmatrix}
1 & 0 & i_1 & 0 \\ 
0 & 1 & 0 & 0 \\ 
0 & 0 & i_3 & 0 \\ 
0 & -\bar{i_3}i_1 & i_4 & \bar{i_3}
\end{pmatrix} ,
\]
we have $A\in \Psi_k(\lmap_k(S_2))$ and $Ae_3=x$.

\textbf{Case 3:} $i_2=i_3=0$ and $i_4\neq 0$. In this case, for
\[
A=\begin{pmatrix}
1 & 0 & i_1 & 0 \\ 
0 & 1 & 0 & 0 \\ 
0 & \bar{i_4}i_1 & 0 & -\bar{i_4} \\ 
0 & 0 & i_4 & 0
\end{pmatrix},
\]
we have $A\in \Psi_k(\lmap_k(S_2))$ and $Ae_3=x$.

\textbf{Case 4:} $i_2=i_3=i_4=0$ and $i_1\neq 0$. In this case, for
\[
A=\begin{pmatrix}
i_1 & 0 & 0 & 0 \\ 
0 & \bar{i_1} & 0 & 0 \\ 
0 & 0 & 1 & 0 \\ 
0 & 0 & 0 & 1
\end{pmatrix},
\]
we have $A\in \Psi_k(\lmap_k(S_2))$ and $Ae_1=x$.

Now, it follows that $O_{e_1}$, $O_{e_2}$, and $O_{e_3}$ are three distinct orbits of $\Psi_k(\lmap_k(S_2))$-action on $\N_k(S_2)$. 
\end{proof}

The three vertices of the graph $\overline{\N_k(S_2)}$ corresponding to orbits $O_{e_1}$, $O_{e_2}$, and $O_{e_3}$ will be denoted by $\overline{e_1}$, $\overline{e_2}$, and $\overline{e_3}$, respectively. The zero matrix and the identity matrix of  size $n\times n$ will be denoted by $O_{n\times n}$ and $I_{n\times n}$, respectively. 

\begin{lem}
\label{lem:edhes_modk_graph}
For a prime number $k$, the graph $\overline{\N_k(S_2)}$ has an edge between $\overline{e_1}$ and $\overline{e_2}$, an edge between $\overline{e_2}$ and $\overline{e_3}$, a loop at $\overline{e_3}$, and $2(k-1)$ loops at $\overline{e_2}$.
\end{lem}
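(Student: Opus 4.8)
The plan is to describe $\overline{\N_k(S_2)}$ by enumerating the $G$-orbits of edges of $\N_k(S_2)$, classified by the pair of vertex-orbits of their endpoints, where $G:=\Psi_k(\lmap_k(S_2))$; in view of Lemma~\ref{lem:vertices_modk_graph} it remains to understand the edges around the three vertices $\overline{e_1},\overline{e_2},\overline{e_3}$. I would start from two observations. First, a vertex of $\N_k(S_2)$ is a primitive vector $x\in\Z_k^4$ (taken up to sign, as it is a homology class of an unoriented curve), and $\{x,y\}$ spans an edge iff $\hat{i}_k(x,y)=\pm1$. Second, by Proposition~\ref{prop:sym_mat_s2}(iii), a matrix $A$ lies in $G$ exactly when its second row equals $(0,u,0,0)$ with $u\in\Z_k^{\times}$; since $\hat{i}_k(e_1,v)=v_2$, this is equivalent to $A$ preserving the hyperplane $e_1^{\perp}=\{v:v_2=0\}$, hence, $A$ being symplectic, to $Ae_1\in\langle e_1\rangle$. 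Thus $G=\stab_{\symp(4,\Z_k)}(\langle e_1\rangle)$, which I would use throughout.

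Next I would deal with the edges meeting $\overline{e_1}$ and $\overline{e_3}$. From $\hat{i}_k(e_1,y)=y_2$, an edge at $e_1$ has other endpoint $y$ with $y_2=\pm1\neq0$, so by Lemma~\ref{lem:vertices_modk_graph} its other endpoint always lies in $O_{e_2}$; hence there is no loop at $\overline{e_1}$ and no edge joining $\overline{e_1}$ to $\overline{e_3}$, and the Case~1 matrices of Lemma~\ref{lem:vertices_modk_graph} with $i_2=1$, which fix $e_1$, act transitively on the set of such endpoints, leaving exactly one edge between $\overline{e_1}$ and $\overline{e_2}$. Similarly $\hat{i}_k(e_3,y)=y_4$, so an edge at $e_3$ has other endpoint $y$ with $y_4=\pm1$, which lies in $O_{e_2}$ if $y_2\neq0$ and in $O_{e_3}$ if $y_2=0$; exhibiting enough elements of $\stab_G(e_3)$ (the transvections along $\langle e_1,e_3\rangle$, the diagonal maps $\operatorname{diag}(\lambda,\lambda^{-1},1,1)$, and the symplectic map $e_1\mapsto e_1,\ e_2\mapsto e_2+e_3,\ e_3\mapsto e_3,\ e_4\mapsto e_1+e_4$) one checks transitivity on each of these two endpoint-sets, giving one edge between $\overline{e_2}$ and $\overline{e_3}$ and one loop at $\overline{e_3}$.

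The remaining and most delicate point is the count of loops at $\overline{e_2}$. I would first pin down $\stab_G(e_2)$: if $A\in G$ fixes $e_2$, then $\hat{i}_k(e_1,e_2)=1$ together with $Ae_1\in\langle e_1\rangle$ forces $Ae_1=e_1$, so $A$ fixes the symplectic plane $\langle e_1,e_2\rangle$ pointwise and restricts on $\langle e_3,e_4\rangle=\langle e_1,e_2\rangle^{\perp}$ to an arbitrary element of $\symp(2,\Z_k)=\Sl(2,\Z_k)$; hence $\stab_G(e_2)\cong\Sl(2,\Z_k)$, acting on the coordinates $(v_3,v_4)$ and trivially on $(v_1,v_2)$. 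An edge at $e_2$ has other endpoint $y$ with $\hat{i}_k(e_2,y)=-y_1=\pm1$, and it contributes a loop at $\overline{e_2}$ exactly when $y_2\in\Z_k^{\times}$. Since $k$ is prime, $\Sl(2,\Z_k)$ has exactly two orbits on $\Z_k^2$, so the relevant orbit data is governed by $y_2$ and by whether $(y_3,y_4)$ vanishes; carrying out this enumeration, with due attention to the unordered nature of edges and the sign ambiguity of vertices, yields the asserted count of $2(k-1)$ loops at $\overline{e_2}$. Together with the previous paragraph, this determines $\overline{\N_k(S_2)}$.

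I expect this last enumeration to be the real obstacle: passing from the $\stab_G(e_2)$-orbits of the other endpoints of edges at $e_2$ to the loops at $\overline{e_2}$ requires keeping precise track of when two such orbits describe the same edge of the quotient --- via an element of $G$ carrying $e_2$ to the opposite endpoint, or via the sign ambiguity of the vertices --- and it is in this bookkeeping that the hypothesis that $k$ is prime enters.
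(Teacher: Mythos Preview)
Your overall strategy matches the paper's: classify $G$-orbits of edges of $\N_k(S_2)$ by the pair of vertex-orbits of their endpoints, using the three vertex-orbits from Lemma~\ref{lem:vertices_modk_graph}. Your identification $G=\stab_{\symp(4,\Z_k)}(\langle e_1\rangle)$ and the resulting computation $\stab_G(e_2)\cong\Sl(2,\Z_k)$ (acting only on the $\langle e_3,e_4\rangle$-coordinates) is a cleaner conceptual packaging of what the paper does by writing down explicit matrices; the cases involving $\overline{e_1}$ and $\overline{e_3}$ go through exactly as you outline and agree with the paper's treatment.

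The difficulty you flag in your last paragraph is real, and neither your sketch nor the paper actually resolves it. The paper simply asserts that the $2(k-1)$ edges $\{e_2,(1,i_2,0,0)\}$ and $\{e_2,(1,i_2',1,0)\}$ for $1\le i_2,i_2'<k$ lie in distinct $G$-orbits, without checking the edge-reversal identifications. If vertices are taken up to sign (as you propose, and as the identification with unoriented curves forces), this assertion is false: for instance when $k=3$ the matrix
\[
A=\begin{pmatrix}1&2&0&0\\0&1&0&0\\0&0&1&0\\0&0&0&1\end{pmatrix}\in G
\]
sends $e_2\mapsto(2,1,0,0)=-(1,2,0,0)$ and $(1,1,0,0)\mapsto e_2$, so the unsigned edges $\{\pm e_2,\pm(1,1,0,0)\}$ and $\{\pm e_2,\pm(1,2,0,0)\}$ lie in the same $G$-orbit. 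In general your enumeration, carried out carefully with unsigned vertices, produces strictly fewer than $2(k-1)$ loops for every $k>2$ (the map $i_2\mapsto -i_2^{-1}$ on $\Z_k^\times$ gives nontrivial edge-reversal identifications).

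The count $2(k-1)$ is correct only in the \emph{signed} model of $\N_k(S_2)$ (vertices are primitive vectors, not taken up to sign), which is what the paper's proof implicitly uses when it writes ``we must have $i_1=1$''. This discrepancy is harmless for Theorem~\ref{thm:genset_lmod}, since listing possibly redundant loop representatives still yields a valid generating set; but if you follow your stated unsigned convention and actually do the bookkeeping, you will not arrive at $2(k-1)$. To make the argument go through you should either adopt the signed convention explicitly, or accept a smaller loop count and note that the lemma as stated overcounts.
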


\begin{proof}
To determine the set of edges of the graph $\overline{\N_k(S_2)}$, we determine distinct orbits of edges of the graph $\N_k(S_2)$ under the action of $\Psi_k(\lmap_k(S_2))$. We first provide computations for loops at vertices of $\overline{\N_k(S_2)}$ and then for edges between distinct vertices of $\overline{\N_k(S_2)}$. Let $v,v'\in \Z_k^4$ be primitive vectors given as $v=(i_1,i_2,i_3,i_4)$ and $v'=(i_1',i_2',i_3',i_4')$.

\textbf{Loops at $\overline{e_1}$:} From the proof of Lemma~\ref{lem:vertices_modk_graph}, it follows that $v\in O_{e_1}$ if and only if $i_2=i_3=i_4=0$ and $i_1\neq 0$. Since $\hat{i}_k(e_1,v)=0$, $\{e_1,v\}$ is not an edge of $\N_k(S_2)$, and therefore, there are no loops at $\overline{e_1}$.

\textbf{Loops at $\overline{e_3}$:} From the proof of Lemma~\ref{lem:vertices_modk_graph}, it follows that $v\in O_{e_3}$ if and only if $i_2=0$ and either $i_3\neq 0$ or $i_4\neq 0$. Since $e_4\in O_{e_3}$ and $\hat{i}_k(e_3,e_4)=1$, there is a loop at $\overline{e_3}$ corresponding to the orbit $O_{{e_3,e_4}}$. Let $v,v'\in O_{e_3}$ such that $\hat{i}_k(v,v')=1$. We claim that there exists $A\in \Psi_k(\lmap_k(S_2))$ such that $A\{e_3,e_4\}=\{v,v'\}$. Since $v'\in O_{e_3}$, we have $i_2'=0$ and either $i_3'\neq 0$ or $i_4'\neq 0$. Since $\hat{i}_k(v,v')=1$, we have $i_3i_4'-i_4i_3'=1$. For the matrix
\[
A=\begin{pmatrix}
1 & 0 & i_1 & i_1' \\ 
0 & 1 & 0 & 0 \\ 
0 & i_1'i_3-i_1i_3' & i_3 & i_3' \\ 
0 & i_1'i_4-i_4'i_1 & i_4 & i_4'
\end{pmatrix},
\]
we have $A\in \Psi_k(\lmap_k(S_2))$ and $A\{e_3,e_4\}=\{v,v'\}$. Hence, there is exactly one loop at $\overline{e_3}$ in $\overline{\N_k(S_2)}$ corresponding to the orbit $O_{\{e_3,e_4\}}$.

\textbf{Loops at $\overline{e_2}$:} From the proof of Lemma~\ref{lem:vertices_modk_graph}, it follows that $v\in O_{e_2}$ if and only if $i_2\neq 0$. For $\{e_2,v\}$ to be an edge of $\N_k(S_2)$, we must have $i_1=1$. Since $\Sl(2,\Z_k)$ acts transitively on the set of primitive vectors, if $(i_3,i_4)\neq (0,0)$, then there exists $A'\in \Sl(2,\Z_k)$ such that $A'(i_3,i_4)=(1,0)$. Then, for the matrix
\[
A=\begin{pmatrix}
I_{2\times 2} & O_{2\times 2} \\ 
O_{2\times 2} & A'
\end{pmatrix},
\] 
we have $A\in \Psi_k(\lmap_k(S_2))$ and $Av=(1,i_2,1,0)$. For $1\leq i_2,i_2'<k$, $v=(1,i_2,0,0)$ and $v'=(1,i_2',1,0)$, it can be seen that $O_{\{e_2,v\}}$ and $O_{\{e_2,v'\}}$ are distinct orbits of edges corresponding to $2(k-1)$ loops at $\overline{e_2}$.

\textbf{Edges between $\overline{e_1}$ and $\overline{e_2}$:} Since $\hat{i}_k(e_1,e_2)=1$, $\{\overline{e_1},\overline{e_2}\}$ is an edge of $\overline{\N_k(S_2)}$. Let $v\in O_{e_1}$ and $v'\in O_{e_2}$ such that $\hat{i}_k(v,v')=1$. Then we have $i_2=i_3=i_4=0$, $i_1\neq 0$, and $i_2'\neq 0$. Since $\hat{i}_k(v,v')=1$, we have $i_1i_2'=1$. Then, for the matrix
\[
A=\begin{pmatrix}
i_1 & i_1' & 0 & 0 \\ 
0 & i_2' & 0 & 0 \\ 
0 & i_3' & 1 & 0 \\ 
0 & i_4' & 0 & 1
\end{pmatrix},
\]
we have $A\{e_1,e_2\}=\{v,v'\}$. Therefore, there is exactly one edge between $\overline{e_1}$ and $\overline{e_2}$ corresponding to the orbit $O_{\{e_1,e_2\}}$.

\textbf{Edges between $\overline{e_1}$ and $\overline{e_3}$:} Let $v\in O_{e_1}$ and $v'\in O_{e_3}$. Then $i_1\neq 0$, $i_2'=i_2=i_3=i_4=0$, and either $i_3'\neq 0$ or $i_4'\neq 0$. Since $\hat{i}_k(v,v')=0$, there are no edges between $\overline{e_1}$ and $\overline{e_3}$.

\textbf{Edges between $\overline{e_2}$ and $\overline{e_3}$:} Since $e_3-e_1\in O_{e_3}$ and $\hat{i}_k(e_2,e_3-e_1)=1$, $\{\overline{e_2},\overline{e_3}\}$ is an edge of $\overline{\N_k(S_2)}$. Let $v\in O_{e_2}$ and $v'\in O_{e_3}$ such that $\hat{i}_k(v,v')=1$. Then we have $-i_2i_1'+i_3i_4'-i_4i_3'=1$, $i_2\neq 0$, $i_2'= 0$, and either $i_3'\neq 0$ or $i_4'\neq 0$. Then there exists $A\in \Psi_k(\lmap_k(S_2))$ such that $A\{e_2,e_3-e_1\}=\{v,v'\}$. For $i_3'\neq 0$, we choose
\[
A=\begin{pmatrix}
\bar{i_2} & i_1 & i_1'+\bar{i_2} & \bar{i_2}i_3\bar{i_3'} \\ 
0 & i_2 & 0 & 0 \\ 
0 & i_3 & i_3' & 0 \\ 
0 & i_4 & i_4' & \bar{i_3'}
\end{pmatrix} 
\]
and for $i_3'=0$, we choose
\[
A=\begin{pmatrix}
\bar{i_2} & i_1 & \bar{i_2}+i_1' & \bar{i_2}i_4\bar{i_4'} \\ 
0 & i_2 & 0 & 0 \\ 
0 & i_3 & 0 & -\bar{i_4'} \\ 
0 & i_4 & i_4' & 0
\end{pmatrix} 
\]
Hence there is exactly one edge between $\overline{e_2}$ and $\overline{e_3}$ corresponding to the orbit $O_{\{e_2,e_3-e_1\}}$.
\end{proof}

For Step 3 of Algorithm~\ref{algo}, we choose curves $a$, $b$, and $c$ in $\N(S_2)$ corresponding to orbits $O_{e_1}$, $O_{e_2}$, and $O_{e_3}$ in $\overline{\N_k(S_2)}$, respectively. For loops in $\overline{\N_k(S_2)}$, we choose edges $\{c,d\}$, $\{b,T_b^{-i_1}(a)\}$, and $\{b,T_b^{-i_2}(c)\}$ in $\N(S_2)$ corresponding to orbits $O_{\{e_3,e_4\}}$, $O_{\{e_2,e_1+i_1e_2\}}$, and $O_{\{e_2,e_1+i_2e_2+e_3\}}$, respectively, where $1\leq i_1,i_2<k$. For edges in $\overline{\N_k(S_2)}$ which are not loops, we choose edges $\{a,b\}$ and $\{b,c\}$ in $\N(S_2)$ corresponding to orbits $O_{\{e_1,e_2\}}$ and $O_{\{e_2,e_3\}}$, respectively. For $F=T_dT_cT_eT_d\in \lmap_k(S_2)$, since $F(e)=c$, we have $\stab_{\lmap_k(S_2)}(c)=F\stab_{\lmap_k(S_2)}(e)F^{-1}$. Therefore, for easier computations, we will derive a generating set for $\stab_{\lmap_k(S_2)}(e)$ instead of $\stab_{\lmap_k(S_2)}(c)$.

In the following lemma, we show that $\stab_{\map(S_2)}(a)\subset \lmap_k(S_2)$. Therefore, it is easier to directly compute a generating set for $\stab_{\lmap_k(S_2)}(a)$. Let $\iota\in \map(S_2)$ be the hyperelliptic involution.

\begin{lem}
\label{lem:stab_a}
We have $\stab_{\lmap_k(S_2)}(a)=\langle T_a,T_c,T_d,T_e,\iota \rangle$.
\end{lem}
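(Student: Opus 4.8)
The plan is to first show that the stabilizer of $a$ in $\map(S_2)$ is already contained in $\lmap_k(S_2)$, so that no index considerations are needed; then to identify this stabilizer explicitly via the cut surface. Concretely, recall from Proposition~\ref{prop:sym_mat_s2} that $F\in\lmap_k(S_2)$ is governed only by the second row/column of $\Psi(F)$: we need $k\mid a_{2i}$ for $i\neq 2$ and $\gcd(a_{22},k)=1$. Taking $a$ to be the curve representing $O_{e_1}$ (so $[a]=e_1$ in suitable coordinates), if $F$ fixes $a$ then $\Psi(F)$ fixes $\pm e_1$, i.e. the first column of $\Psi(F)$ is $\pm e_1$. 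The symplectic condition $\hat i(e_1, F_\#(e_2))=\hat i(F_\#(e_1),F_\#(e_2)) = \hat i(\pm e_1, e_2)=\pm 1$ then forces $a_{22}=\pm 1$, and moreover the preservation of the symplectic form together with $F_\#(e_1)=\pm e_1$ pins down the relevant entries so that the congruence conditions of Proposition~\ref{prop:sym_mat_s2}(ii) hold automatically. Hence $\stab_{\map(S_2)}(a)\subseteq \lmap_k(S_2)$, and therefore $\stab_{\lmap_k(S_2)}(a)=\stab_{\map(S_2)}(a)$.

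The second half is then a purely topological computation of $\stab_{\map(S_2)}(a)$ for a nonseparating simple closed curve $a$ on $S_2$. Here I would use the standard cut sequence $1\to\langle T_{a^1}T_{a^2}^{-1}\rangle\to\map(S_{2,a})\to\stab_{\map(S_2)}(a)\to 1$ recalled just before Theorem~\ref{thm:stab_genset}, where $S_{2,a}$ is a genus-$1$ surface with two boundary components. A known generating set for the mapping class group of such a surface (e.g. from the Birman exact sequence / the presentation in \cite{primer}) consists of Dehn twists about a handful of explicit curves plus the boundary twists; pushing these forward to $S_2$ and re-gluing gives twists about $a$'s complementary curves. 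The curves $c,d,e$ are exactly the ones pictured in the paper's figure that, together with $a$, fill a neighbourhood realizing this generating set, and the hyperelliptic involution $\iota$ accounts for the ``hyperelliptic'' symmetry swapping the two sides of $a$ (equivalently the element realizing $-\mathrm{id}$ on homology while fixing $[a]$ up to sign). So the claim $\stab_{\map(S_2)}(a)=\langle T_a,T_c,T_d,T_e,\iota\rangle$ should follow by matching this list against a known generating set of $\map(S_{2,a})$ and using the chain-relation and lantern-type relations in $\map(S_2)$ to eliminate redundant generators and the boundary twist $T_{a^1}T_{a^2}^{-1}$.

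The main obstacle I anticipate is the second step: producing a clean, citable generating set for $\map(S_{2,a})$ (genus $1$, two boundary components) and then carefully transporting it through the cutting/capping maps so that every generator is expressed as a product of twists about $a,c,d,e$ and powers of $\iota$ — in particular handling the boundary twist $T_{a^1}T_{a^2}^{-1}$, which dies in $\stab_{\map(S_2)}(a)$ but must be accounted for, and checking that $\iota$ genuinely stabilizes $a$ and supplies the ``missing'' mapping class not in the image of the Dehn twist subgroup. The first step (the symplectic inclusion $\stab_{\map(S_2)}(a)\subset\lmap_k(S_2)$) I expect to be short, essentially a two-line consequence of Proposition~\ref{prop:sym_mat_s2} once one observes that fixing $a$ pins the first column of the symplectic matrix to $\pm e_1$.
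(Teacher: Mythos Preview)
Your overall strategy is sound, and your first step is actually cleaner than what the paper does. Once the first column of $\Psi(F)$ is $\pm e_1$, pairing it symplectically against the remaining columns forces the entire second row to be $(0,\pm1,0,0)$, so the conditions of Proposition~\ref{prop:sym_mat_s2} hold automatically and $\stab_{\map(S_2)}(a)\subset\lmap_k(S_2)$. The paper instead computes the generating set first and then checks each generator lies in $\lmap_k(S_2)$; both arguments are valid, but yours avoids the case-by-case verification.

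The real difference is in the second step. You propose the \emph{cutting} sequence $1\to\langle T_{a^1}T_{a^2}^{-1}\rangle\to\map(S_{2,a})\to\stab_{\map(S_2)}(a)\to 1$, with $S_{2,a}$ a genus-one surface with two boundary components. The paper uses instead the \emph{deleting} sequence \cite[Proposition~3.20]{primer}
\[
1 \longrightarrow \langle T_a\rangle \longrightarrow \stab_{\map(S_2)}(a) \longrightarrow \map(S_2\setminus a)\cong\map(S_{1,2}) \longrightarrow 1,
\]
where $S_{1,2}$ is the twice-\emph{punctured} torus. This is the more economical route: $\map(S_{1,2})$ has a standard four-element generating set $\{T_\alpha,T_\beta,T_\gamma,\bar\iota\}$ which lifts immediately to $T_e,T_d,T_c,\iota$, and $T_a$ generates the kernel, giving the claimed five generators with no further manipulation. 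Your route through $\map(S_{2,a})$ would also succeed, but --- exactly as you anticipate --- it requires a less commonly cited generating set, boundary-twist bookkeeping, and care about whether the two boundary components may be permuted (they must be, or the map does not surject onto the full stabilizer and $\iota$ is not in the image). The deleting sequence sidesteps all of this.
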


\begin{proof}
We recall the short-exact sequence~\cite[Proposition 3.20]{primer} induced by deleting the curve $a$ from $S_2$:
\[
1 \to \langle T_{a}\rangle \to\mathrm{Stab_{\map(S_2)}}(a) \to \map(S_2\setminus a)\cong\map(S_{1,2})\to 1.
\]
For curves shown in Figure \ref{fig:curves_s12}, it is known~\cite[Chapter 4]{primer} that $\map(S_{1,2})=\langle T_{\alpha},T_{\beta},T_{\gamma},\bar{\iota} \rangle$, where $\bar{\iota}\in \map(S_{1,2})$ is the hyperelliptic involution represented by $\pi$-rotation of $S_{1,2}$ as shown in Figure \ref{fig:curves_s12}. 
\begin{figure}[ht]
\centering
\includegraphics[scale=.7]{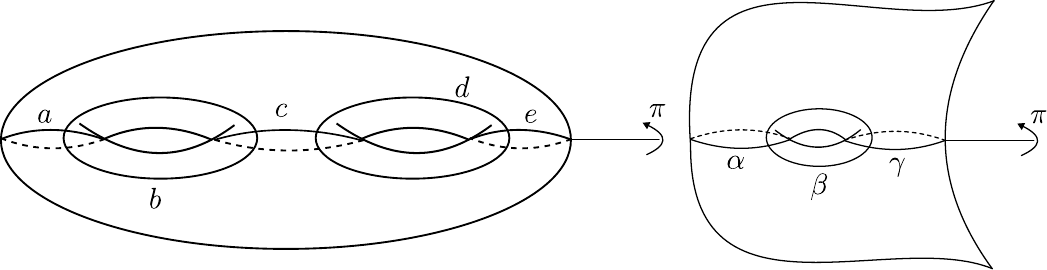}
\caption{The surface $S_{1,2}$ obtained from $S_2$ by deleting the curve $e$.}
\label{fig:curves_s12}
\end{figure}
Since $T_{c} \mapsto T_{\gamma}$, $T_{d}\mapsto T_{\beta}$, $T_{e}\mapsto T_{\alpha}$, and $\iota \mapsto \bar{\iota}$ under the map $\stab_{\map(S_2)}(a)\to \map(S_2\setminus a)$ (see Figure~\ref{fig:curves_s12}), we have $\mathrm{Stab}_{\map(S_2)}(a)=\langle T_{a},T_{c},T_{d},T_{e},\iota\rangle$. From Proposition~\ref{prop:sym_mat_s2}, it follows that $T_{a},T_{c},T_{d},T_{e},\iota\in \lmap_k(S_2)$, we have $\stab_{\map(S_2)}(a)\subset \lmap_k(S_2)$. Hence $\stab_{\lmap_k(S_2)}(a)=\langle T_a,T_c,T_d,T_e,\iota \rangle$.
\end{proof}

In our computations, we will frequently use the following well-known (see~\cite[Section 3.3-3.5]{primer}) relations between Dehn twists.

\begin{lem}
Let $c_1$ and $c_2$ be two simple closed curves on $S$. Then the following holds.
\begin{enumerate}[(i)]
\item If $i(c_1,c_2)=0$, then $T_{c_1}T_{c_2}=T_{c_2}T_{c_1}$.
\item For any $F\in \map(S)$, $FT_{c_1}F^{-1}=T_{F(c_1)}$.
\item If $i(c_1,c_2)=1$, then the following are equivalent.
\begin{enumerate}[(a)]
\item $T_{c_1}T_{c_2}T_{c_1}=T_{c_2}T_{c_1}T_{c_2}$.
\item $T_{c_1}T_{c_2}T_{c_1}^{-1}=T_{c_2}^{-1}T_{c_1}T_{c_2}$.
\item $T_{c_1}^{-1}T_{c_2}T_{c_1}=T_{c_2}T_{c_1}T_{c_2}^{-1}$.
\item $T_{c_1}T_{c_2}(c_1)=c_2$.
\end{enumerate} 
\end{enumerate} 
\end{lem}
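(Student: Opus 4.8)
The plan is to treat the three parts separately, relying only on standard facts about the supports of Dehn twists and the change-of-coordinates principle (as in \cite[\S3.1--3.3]{primer}); none of the parts needs anything deeper. For part (i), I would first realize the isotopy classes $c_1$ and $c_2$ by disjoint simple closed curves, which is possible precisely because $i(c_1,c_2)=0$, and then choose disjoint closed annular neighbourhoods $N_1\supset c_1$ and $N_2\supset c_2$. A representative of $T_{c_1}$ can be chosen supported in $N_1$ and a representative of $T_{c_2}$ supported in $N_2$; since these supports are disjoint, the two homeomorphisms commute on the nose, so $T_{c_1}T_{c_2}=T_{c_2}T_{c_1}$ in $\map(S)$.

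For part (ii), pick representatives $f$ of $F$ and $\tau$ of $T_{c_1}$, with $\tau$ supported in a closed annular neighbourhood $N$ of $c_1$ and equal to the standard left twist under the chosen orientation-preserving identification of $N$ with a model annulus $A$. Then $f\tau f^{-1}$ is supported in $f(N)$, a closed annular neighbourhood of $F(c_1)$, and on $f(N)$ it is the conjugate of the standard left twist by the orientation-preserving homeomorphism $f|_N$. Because orientation-preserving self-homeomorphisms of $A$ cannot interchange the two generators of $\pi_0(\homeo(A,\partial A))\cong\Z$, the map $f\tau f^{-1}$ is the left twist about the core of $f(N)$, i.e.\ about $F(c_1)$, so $FT_{c_1}F^{-1}=T_{F(c_1)}$. (The orientation-preserving hypothesis on $F$ is essential: an orientation-reversing $F$ would give $T_{F(c_1)}^{-1}$.)

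For part (iii), the equivalences (a)\,$\Leftrightarrow$\,(b) and (a)\,$\Leftrightarrow$\,(c) are purely group-theoretic. Writing $x=T_{c_1}$ and $y=T_{c_2}$, multiplying $xyx=yxy$ on the left by $y^{-1}$ and on the right by $x^{-1}$ gives $xyx^{-1}=y^{-1}xy$, and every step is reversible, which is (a)\,$\Leftrightarrow$\,(b); multiplying $xyx=yxy$ instead on the right by $y^{-1}$ and on the left by $x^{-1}$ gives $x^{-1}yx=yxy^{-1}$, which is (a)\,$\Leftrightarrow$\,(c). For (a)\,$\Leftrightarrow$\,(d), I would rewrite (a) in the conjugated form $(T_{c_1}T_{c_2})\,T_{c_1}\,(T_{c_1}T_{c_2})^{-1}=T_{c_2}$ and apply part (ii) to the left-hand side, which equals $T_{(T_{c_1}T_{c_2})(c_1)}$; thus (a) is equivalent to $T_{(T_{c_1}T_{c_2})(c_1)}=T_{c_2}$, and since $c_1,c_2$ are essential simple closed curves and $c\mapsto T_c$ is injective on isotopy classes of such curves, this is equivalent to $(T_{c_1}T_{c_2})(c_1)=c_2$, i.e.\ (d). Only the implication (a)\,$\Rightarrow$\,(d) uses injectivity; the converse follows by applying $T$ to both sides of (d) and invoking part (ii).

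The routine steps are (i) and the algebraic manipulations in (iii); the single place that calls for genuine care is the assertion in part (ii) that left-handedness of a Dehn twist is preserved under conjugation by an orientation-preserving homeomorphism — equivalently, that $T_c$ is a well-defined element of $\map(S)$ depending only on the isotopy class of $c$ and on the fixed orientation of $S$. For completeness one could also remark that the four equivalent statements in (iii) genuinely hold whenever $i(c_1,c_2)=1$, since a regular neighbourhood of $c_1\cup c_2$ is a one-holed torus on which $(T_{c_1}T_{c_2})(c_1)=c_2$ is visible directly (cf.\ \cite[\S3.3]{primer}); this is why (d) is the natural statement to establish first, though it is not needed for the lemma as stated.
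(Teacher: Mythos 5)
Your proof is correct, and it is exactly the standard argument: the paper itself gives no proof of this lemma, simply citing it as a collection of well-known facts from Farb--Margalit (Sections 3.3--3.5 of the Primer), which is where your arguments for (i), (ii), and (iii) come from. The one place needing care — using part (ii) together with the injectivity of $c\mapsto T_c$ on isotopy classes of essential simple closed curves to get (a)$\Leftrightarrow$(d), with essentiality guaranteed since $i(c_1,c_2)=1$ forces both curves to be nonseparating — is handled correctly in your write-up.
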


For Step 4 of Algorithm~\ref{algo}, we derive a generating set for $\stab_{\I(S_2)}(\gamma)$, where $\gamma\in \{b,c\}$. First, we provide computations for the case $\gamma=e$. We note that $S_{2,e}$ is embedded in $S_2$ as shown in Figure~\ref{fig:s2e_embed_s2}.

\begin{figure}[ht]
\centering
\includegraphics[scale=0.7]{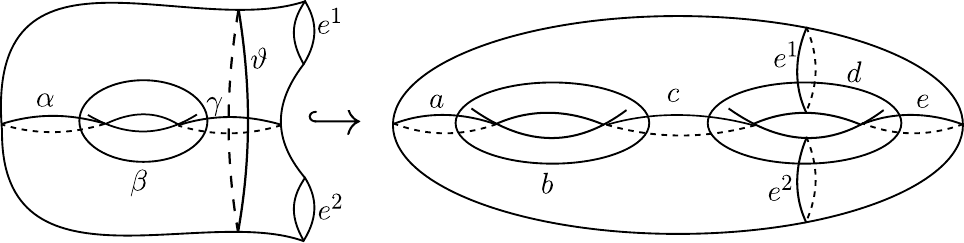}
\caption{An embedding of $S_{2,e}$ in $S_2$.}
\label{fig:s2e_embed_s2}
\end{figure}

\begin{lem}
\label{lem:stab_e_torelli}
We have
\[
\stab_{\I(S_2)}(e)=\langle
T_a^{-m-1}T_c^mT_b^{-1}T_c^n(T_aT_b)^6T_c^{-n}T_bT_c^{-m}T_a^{m+1}\mid m,n\in \Z\rangle.
\]
\end{lem}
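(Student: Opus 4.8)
The plan is to use the structure theorem for the Torelli group of a surface cut along a nonseparating curve (Theorem~\ref{thm:stab_genset}), which in the genus-two case gives an isomorphism $[\pi,\pi] \xrightarrow{\sim} \stab_{\I(S_2)}(e)$, where $\pi = \pi_1(\widehat{S_{2,e}})$. Since $S_{2,e}$ has genus $1$ with two boundary components, $\widehat{S_{2,e}}$ has genus $1$ with one boundary component, so $\pi$ is free of rank $2$, say $\pi = \langle x, y\rangle$. First I would set up an explicit identification of $x$ and $y$ with loops on $S_{2,e}$ realized inside $S_2$ as in Figure~\ref{fig:s2e_embed_s2}; the natural choice is to take $x, y$ to be represented by curves in the same isotopy classes as $a$ and $c$ (the two curves meeting $e$ once and bounding together with $e$ a pair of pants complement), so that the homology classes $[a], [c]$ form a symplectic pair in $H_1$ of the capped surface. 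By Tomaszewski's theorem (see Remark~\ref{rem:johnson_tom_result}), the commutator subgroup $[\pi,\pi]$ of the free group $\langle x,y\rangle$ is freely generated by the elements $[x,y]^{(x^m y^n)} = (x^m y^n) [x,y] (x^m y^n)^{-1}$, or in a standard normal form, by $\{ y^{-n} x^{-m} [x,y] x^m y^n : m, n \in \Z\}$ (up to the precise bracketing convention, which I would fix to match the statement).

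The core of the argument is then to translate each free generator of $[\pi,\pi]$ into a mapping class via the Birman exact sequence / point-pushing description that underlies Theorem~\ref{thm:stab_genset}. Concretely, the isomorphism $[\pi,\pi] \to \stab_{\I(S_2)}(e)$ sends the commutator $[x,y]$ (a separating loop in $\widehat{S_{2,e}}$) to the point-push of that loop, which is a product of two Dehn twists about the two boundary curves of a regular neighborhood; after capping and tracking back to $S_2$, the basic commutator $[x,y]$ — realized as a curve homologous to zero and cutting off a genus-one piece — goes to $(T_aT_b)^6$, the chain relator for a genus-one subsurface with boundary (recall $i(a,b)=1$, so $T_aT_b$ has order related to the $2$-chain, and $(T_aT_b)^6$ is the twist about the boundary of the chain neighborhood, which is exactly the image of the generating commutator up to our conventions). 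Conjugation by $x$ corresponds to the Dehn twist $T_a$ and conjugation by $y$ to a twist about a curve in the class of $c$; more precisely, pushing the basepoint around $x$ conjugates the point-push homomorphism by $T_a$ (this is the standard relation $\mathrm{Push}(\gamma)$ conjugated by the twist dual to $\gamma$), and similarly around $y$ by $T_c$. Iterating, the generator indexed by $(m,n)$ maps to $T_c^n T_a^{-m-1} \cdot (\text{something}) \cdot \ldots$; here I would carefully match the words so that conjugation of $(T_aT_b)^6$ by $T_a^{-m-1}T_c^m T_b^{-1} T_c^n$ (reading the index $m$ shift, the $T_b^{-1}$ accounting for the change of marking between $a$ and $c$ as the dual pair, and the $n$ shift) yields exactly the claimed word $T_a^{-m-1}T_c^m T_b^{-1} T_c^n (T_aT_b)^6 T_c^{-n} T_b T_c^{-m} T_a^{m+1}$.

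The main obstacle I anticipate is bookkeeping the precise relationship between the abstract free-group data $(x, y, [x,y], \text{conjugation})$ and the geometric mapping classes $(T_a, T_c, (T_aT_b)^6, \text{conjugation})$ — in particular getting the exact exponents and the intervening $T_b^{-1}$ and $T_b$ right, since a wrong convention for the point-pushing map, the orientation of the chain relator, or the identification of $\pi$ with $\langle x, y\rangle$ will produce a word that is only conjugate (not equal) to the stated one, or off by signs in the exponents. To control this I would pin down one explicit model: fix the embedding $S_{2,e} \hookrightarrow S_2$ of Figure~\ref{fig:s2e_embed_s2}, choose the subsurface $S$ of Theorem~\ref{thm:stab_genset} to be the genus-one piece on which the chain $\{a, b\}$ lives, verify directly that the image of $[x,y]$ is $(T_aT_b)^6$ by computing its action on a small set of test curves (or its symplectic triviality plus its support), and then verify that conjugation by the Dehn twists $T_a$ and $T_c$ realizes the $\pi$-conjugation action on $[\pi,\pi]$ by checking it on the single generator $[x,y]$ and invoking equivariance. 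Once the $(m,n)=(0,0)$ case and the two conjugation steps are nailed down, the general formula follows by induction on $|m|+|n|$, and combined with Theorem~\ref{thm:stab_genset} (which guarantees these elements generate) this gives the claimed generating set.
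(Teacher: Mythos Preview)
Your overall strategy matches the paper's exactly: invoke Theorem~\ref{thm:stab_genset} to identify $\stab_{\I(S_2)}(e)$ with $[\pi,\pi]$ for $\pi=\pi_1(\widehat{S_{2,e}})$ free of rank two, use Tomaszewski's generating set $\{x^m y^n[x,y]y^{-n}x^{-m}\}$ for $[\pi,\pi]$, and identify the basic commutator with $(T_aT_b)^6$. The paper carries out precisely this plan.

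The gap is in your translation step. You assert that ``conjugation by $x$ corresponds to the Dehn twist $T_a$ and conjugation by $y$ to a twist about a curve in the class of $c$.'' This is not correct: the isomorphism $[\pi,\pi]\to\stab_{\I(S_2)}(e)$ is the restriction of the point-pushing (disk-pushing) homomorphism $\pi\to\map(S_{2,e})$, and the push along a simple loop $\gamma$ is a \emph{multitwist} $T_{\gamma_L}^{-1}T_{\gamma_R}$ about the two boundary curves of a tubular neighborhood of $\gamma$, not a single twist. In the paper's model the two generators $\tilde a,\tilde b$ of $\pi$ map (modulo powers of the central boundary twist $T_{e^2}$) to $T_{\bar a}^{-1}T_{\bar c}$ and $T_{\bar b}^{-1}T_{\bar f}$; after passing to $S_2$ these become $T_a^{-1}T_c$ and $T_b^{-1}T_f$ with $f=T_a^{-1}T_b^{-1}(c)$. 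Thus conjugation by $x^m y^n$ becomes conjugation by $(T_a^{-1}T_c)^m(T_b^{-1}T_f)^n$, and only after expanding $T_f=T_a^{-1}T_b^{-1}T_cT_bT_a$ and using commutativity of $T_a,T_c$ and $[T_b,(T_aT_b)^6]=1$ does one arrive at the stated conjugator $T_a^{-m-1}T_c^mT_b^{-1}T_c^n$. Your single-twist identification would instead produce generators $T_a^mT_c^n(T_aT_b)^6T_c^{-n}T_a^{-m}$, which do not even contain the crucial $T_b^{\pm 1}$; the intervening $T_b^{-1}$ is not a ``change of marking'' correction but comes directly from the second point-push $T_b^{-1}T_f$. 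Fixing this one computation (and then simplifying as above) is exactly what the paper does.
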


\begin{proof}
By Theorem~\ref{thm:stab_genset}, it follows that $\stab_{\I(S_2)}(e)\cong [\pi,\pi]$, where $\pi=\pi_1(\widehat{S_{2,e}})$. Consider the generating set $\langle \tilde{a},\tilde{b}\rangle$ of $\pi$ shown in Figure~\ref{fig:gen_set_pi}.
\begin{figure}[ht]
\centering
\includegraphics[scale=0.7]{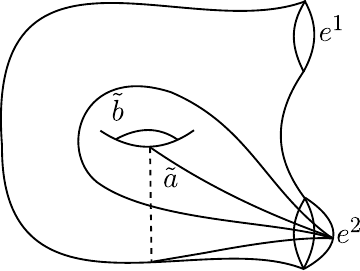}
\caption{Generators of $\pi$.}
\label{fig:gen_set_pi}
\end{figure}
The group $\pi$ is embedded in $\map(S_{2,e})$ via pushing the capped disk of $S_{2,e}$ along loops~\cite[Section 4.2]{primer}. Therefore, the images of $\tilde{a}$ and $\tilde{b}$ in $\map(S_{2,e})$ are $T_{\bar{a}}^{-1}T_{\bar{c}}T_{e^2}^{\ell}$ and $T_{\bar{b}}^{-1}T_{\bar{f}}T_{e^2}^{\ell'}$ for some $\ell,\ell'\in \Z$ (see~\cite[Proposition 2.8]{salter21} also), where curves $\bar{a}$, $\bar{c}$, $\bar{b}$, and $\bar{f}$ are shown in Figure~\ref{fig:nbd_genset_pi}.
\begin{figure}[ht]
\centering
\includegraphics[scale=.7]{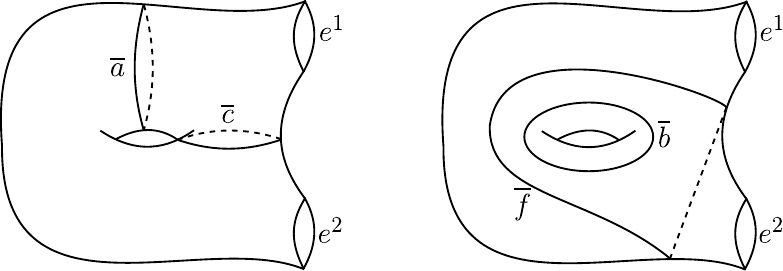}
\caption{The boundary curves of annular neighbouhoods of generators of $\pi$.}
\label{fig:nbd_genset_pi}
\end{figure}
It is known~\cite[Theorem A]{putman22} that $[\pi,\pi]$ is generated by the set $\{\tilde{a}^m\tilde{b}^n[\tilde{a},\tilde{b}]\tilde{b}^{-n}\tilde{a}^{-m}\mid m,n\in \Z\}$. Furthermore, the image of $[\tilde{a},\tilde{b}]\in [\pi,\pi]$ in $S_{2,e}$ is $T_{\vartheta}T_{e^1}^{-1}T_{e^2}$, where $\vartheta$ is the separating curve shown in Figure~\ref{fig:s2e_embed_s2}. Since $T_{e^1}$ and $T_{e^2}$ maps to $T_e$ in $\map(S_2)$ under the map $\map(S_{2,e})\to \stab_{\map(S_2)}(e)$, we have that $[\tilde{a},\tilde{b}]$ maps to $(T_aT_b)^6$ in $\I(S_2)$. Therefore, we have
\[
\stab_{\I(S_2)}(e)=\langle
(T_a^{-1}T_cT_e^{\ell})^m(T_b^{-1}T_fT_e^{\ell'})^n(T_aT_b)^6(T_b^{-1}T_fT_e^{\ell'})^{-n}(T_a^{-1}T_cT_e^{\ell})^{-m}\mid m,n\in \Z\rangle,
\]
where $f$ is shown in Figure~\ref{fig:curves_s2}. Since $T_e$ commutes with $T_a$, $T_b$, $T_c$, $T_f$, and $(T_aT_b)^6$, we have
\[
\stab_{\I(S_2)}(e)=\langle
(T_a^{-1}T_c)^m(T_b^{-1}T_f)^n(T_aT_b)^6(T_b^{-1}T_f)^{-n}(T_a^{-1}T_c)^{-m}\mid m,n\in \Z\rangle.
\]
\begin{figure}[ht]
\centering
\includegraphics[scale=.7]{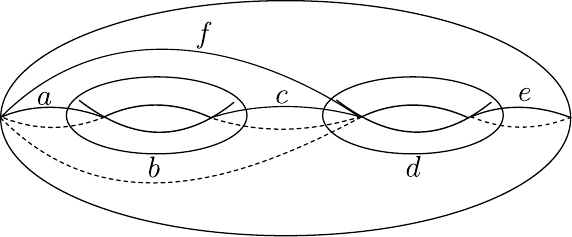}
\caption{The figure shows that $e$ is disjoint from $a,b,c$, and $f$.}
\label{fig:curves_s2}
\end{figure}
Since $f=T_a^{-1}T_b^{-1}(c)$, we have $T_f=T_a^{-1}T_b^{-1}T_cT_bT_a$.

\noindent Since
\begin{align*}
(T_a^{-1}T_c)^m(T_b^{-1}T_f)^n(T_aT_b)^6(T_b^{-1}T_f)^{-n}(T_a^{-1}T_c)^{-m}&=(T_a^{-1}T_c)^mT_f^n(T_aT_b)^6T_f^{-n}(T_a^{-1}T_c)^{-m}\\
&=(T_a^{-1}T_c)^mT_a^{-1}T_b^{-1}T_c^n(T_aT_b)^6T_c^{-n}T_bT_a(T_a^{-1}T_c)^{-m}\\
&=T_a^{-m-1}T_c^mT_b^{-1}T_c^n(T_aT_b)^6T_c^{-n}T_bT_c^{-m}T_a^{m+1},
\end{align*} 
we have
\[
\stab_{\I(S_2)}(e)=\langle
T_a^{-m-1}T_c^mT_b^{-1}T_c^n(T_aT_b)^6T_c^{-n}T_bT_c^{-m}T_a^{m+1}\mid m,n\in \Z\rangle.
\]
\end{proof}

\begin{lem}
\label{lem:stab_b_torelli}
We have
\[
\stab_{\I(S_2)}(b)=\langle T_cT_dT_eT_d^{m+1}T_a^{-1}T_b^{-m-1}T_c^nT_d^{-1}(T_bT_c)^6(T_cT_dT_eT_d^{m+1}T_a^{-1}T_b^{-m-1}T_c^nT_d^{-1})^{-1}
\mid m,n\in \Z\rangle.
\]
\end{lem}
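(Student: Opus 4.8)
The plan is to obtain the generating set for $\stab_{\I(S_2)}(b)$ from that for $\stab_{\I(S_2)}(e)$ in Lemma~\ref{lem:stab_e_torelli} by conjugating with a mapping class that carries $e$ to $b$. Recall the simple closed curves $a,b,c,d,e$ on $S_2$, which form a chain: $i(a,b)=i(b,c)=i(c,d)=i(d,e)=1$ and all other pairs are disjoint. Setting $F:=T_dT_cT_eT_d$, we have $F(e)=c$ (as recorded before Lemma~\ref{lem:stab_e_torelli}), while $T_cT_b(c)=b$ by the relation $T_{c_1}T_{c_2}(c_1)=c_2$ applied to the pair $c,b$ with $i(c,b)=1$; hence $\Phi:=(T_cT_b)F=T_cT_bT_dT_cT_eT_d$ satisfies $\Phi(e)=b$. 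Since the Torelli group $\I(S_2)$ is normal in $\map(S_2)$, conjugation by $\Phi$ carries $\stab_{\map(S_2)}(e)$ isomorphically onto $\stab_{\map(S_2)}(b)$ and restricts to an isomorphism $\stab_{\I(S_2)}(e)\xrightarrow{\,\sim\,}\stab_{\I(S_2)}(b)$. So a generating set for $\stab_{\I(S_2)}(b)$ is obtained by applying $\Phi(\,\cdot\,)\Phi^{-1}$ to each generator produced in Lemma~\ref{lem:stab_e_torelli}.

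Using $\Phi T_x\Phi^{-1}=T_{\Phi(x)}$, the generator $T_a^{-m-1}T_c^mT_b^{-1}T_c^n(T_aT_b)^6T_c^{-n}T_bT_c^{-m}T_a^{m+1}$ is carried to
\[
T_{\Phi(a)}^{-m-1}T_{\Phi(c)}^{m}T_{\Phi(b)}^{-1}T_{\Phi(c)}^{n}\bigl(T_{\Phi(a)}T_{\Phi(b)}\bigr)^{6}T_{\Phi(c)}^{-n}T_{\Phi(b)}T_{\Phi(c)}^{-m}T_{\Phi(a)}^{m+1}.
\]
The remaining work is to compute $\Phi(a),\Phi(b),\Phi(c)$ in terms of $a,b,c,d,e$ — using that $a$ is disjoint from $c,d,e$, so $F$ fixes $a$ and $\Phi(a)=T_c(T_b(a))$, together with the chain relations for $b$ and $c$ — and then to rewrite the displayed word using commutation of twists about disjoint curves and the braid relation. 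Since $(T_xT_y)^6$ is the Dehn twist about the separating curve bounding a regular neighbourhood of $x\cup y$, the middle factor $\bigl(T_{\Phi(a)}T_{\Phi(b)}\bigr)^6$ is a conjugate of $(T_bT_c)^6$; after matching the conjugators, the whole word collapses to $W(T_bT_c)^6W^{-1}$ with $W=T_cT_dT_eT_d^{m+1}T_a^{-1}T_b^{-m-1}T_c^nT_d^{-1}$, which is the asserted form.

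I expect the main obstacle to be precisely this last simplification: one must track carefully how $\Phi$ moves $a,b,c$ and then run a moderately long string of braid and commutation moves to normalise the conjugated word into the stated form; everything else is bookkeeping. As an alternative to the conjugation argument, one can repeat the proof of Lemma~\ref{lem:stab_e_torelli} with $e$ replaced by $b$: cut $S_2$ along $b$, cap off one boundary component to get $\widehat{S_{2,b}}$ with $\pi=\pi_1(\widehat{S_{2,b}})$, identify $\stab_{\I(S_2)}(b)$ with $[\pi,\pi]$ via Theorem~\ref{thm:stab_genset}, take the Tomaszewski--Putman generators $\{\tilde{a}^m\tilde{b}^n[\tilde{a},\tilde{b}]\tilde{b}^{-n}\tilde{a}^{-m}\}$ of $[\pi,\pi]$, and compute their images in $\stab_{\map(S_2)}(b)$ under the disk-pushing embedding $\pi\hookrightarrow\map(S_{2,b})$; the image of $[\tilde{a},\tilde{b}]$ is a two-chain relator that rewrites as $(T_bT_c)^6$, and the images of $\tilde{a},\tilde{b}$ assemble into $W$.
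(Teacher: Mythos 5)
Your proposal is correct and follows essentially the same route as the paper: the conjugator you build, $\Phi=T_cT_bT_dT_cT_eT_d$, is exactly the element $F$ the paper uses (with $F(e)=b$), and the paper likewise conjugates the generators of $\stab_{\I(S_2)}(e)$ from Lemma~\ref{lem:stab_e_torelli} and then normalises the resulting words into the form $W(T_bT_c)^6W^{-1}$. The only difference is that the paper carries out the braid/commutation bookkeeping explicitly (inserting $G=T_aT_bT_cT_d$ with $G(a)=b$, $G(b)=c$ to convert $(T_aT_b)^6$ into $(T_bT_c)^6$), which is the routine step you defer.
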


\begin{proof}
For $F=T_cT_bT_dT_cT_eT_d$, since $F(e)=b$, we have $\stab_{\I(S_2)}(b)=F\stab_{\I(S_2)}(e)F^{-1}$. Thus
\[
\stab_{\I(S_2)}(b)=\langle
T_{F(a)}^{-m-1}T_{F(c)}^mT_{F(b)}^{-1}T_{F(c)}^n(T_{F(a)}T_{F(b)})^6T_{F(c)}^{-n}T_{F(b)}T_{F(c)}^{-m}T_{F(a)}^{m+1}\mid m,n\in \Z\rangle.
\]
For $G=T_aT_bT_cT_d$, since $G(a)=b$, $G(b)=c$, and $G(c)=d$, we have
\begin{align*}
&T_{F(a)}^{-m-1}T_{F(c)}^mT_{F(b)}^{-1}T_{F(c)}^n(T_{F(a)}T_{F(b)})^6T_{F(c)}^{-n}T_{F(b)}T_{F(c)}^{-m}T_{F(a)}^{m+1}\\
&=T_cT_bT_dT_cT_eT_dT_a^{-m-1}T_c^mT_b^{-1}T_c^nT_b(T_aT_b)^6(T_cT_bT_dT_cT_eT_dT_a^{-m-1}T_c^mT_b^{-1}T_c^nT_b)^{-1}\\
&=T_cT_dT_eT_a^{-1}(T_aT_bT_cT_d)T_a^{-m-1}T_c^{m+1}T_b^nT_c^{-1}(T_aT_b)^6(T_cT_dT_eT_a^{-1}(T_aT_bT_cT_d)T_a^{-m-1}T_c^{m+1}T_b^nT_c^{-1})^{-1}\\
&=T_cT_dT_eT_d^{m+1}T_a^{-1}T_b^{-m-1}T_c^nT_d^{-1}(T_bT_c)^6(T_cT_dT_eT_d^{m+1}T_a^{-1}T_b^{-m-1}T_c^nT_d^{-1})^{-1}.
\end{align*}
Hence, we have
\[
\stab_{\I(S_2)}(b)=\langle T_cT_dT_eT_d^{m+1}T_a^{-1}T_b^{-m-1}T_c^nT_d^{-1}(T_bT_c)^6(T_cT_dT_eT_d^{m+1}T_a^{-1}T_b^{-m-1}T_c^nT_d^{-1})^{-1}
\mid m,n\in \Z\rangle.
\]
\end{proof}

Let $\Gamma_0(k)$ be the subgroup of $\Sl(2,\Z)$ defined as follows:
\[
\Gamma_0(k)=\left\{
\begin{pmatrix}
a & b \\ 
c & d
\end{pmatrix}\in \Sl(2,\Z)
\Big| c\equiv 0\pmod k 
\right\}.
\]
Let $\phi:\Sl(2,\Z)\to \symp(4,\Z)$ be the injective homomorphism defined as
\[
\phi:
\begin{pmatrix}
a & b \\ 
c & d
\end{pmatrix}
\mapsto
\begin{pmatrix}
a & b & 0 & 0 \\ 
c & d & 0 & 0 \\ 
0 & 0 & 1 & 0 \\ 
0 & 0 & 0 & 1
\end{pmatrix}.
\] 
Let $\s\subset\lmap_k(S_2)$ such that $T_b^k\in \s$ and $\Psi(\s)$ is a finite generating set for $\phi(\Gamma_0(k))$. For Step 5 of Algorithm~\ref{algo}, we compute a generating set for $\stab_{\Psi(\lmap_k(S_2))}(\pm[\gamma])$ for $\gamma=e,b$. For these computations, we define the following matrices:
\begin{equation}
\label{eqn:M_matrices}
M=\begin{pmatrix}
 1 & 0 & 0 & 0 \\ 
 0 & 1 & 0 & 1 \\ 
 -1 & 0 & 1 & 0 \\ 
 0 & 0 & 0 & 1
 \end{pmatrix}, \text{ and }
N=\begin{pmatrix}
1 & 0 & 0 & 0 \\ 
0 & 1 & 1 & 0 \\ 
0 & 0 & 1 & 0 \\ 
1 & 0 & 0 & 1
\end{pmatrix}.
\end{equation}

\begin{lem}
\label{lem:stab_e_matrices}
We have
\[
\stab_{\Psi(\lmap_k(S_2))}(\pm[e])=\langle \phi(\Gamma_0(k)),\Psi(T_a), \Psi(T_c),\Psi(T_e), M^k, -I_{4\times 4} \rangle. 
\]
\end{lem}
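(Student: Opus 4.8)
The plan is to describe the group $\stab_{\Psi(\lmap_k(S_2))}(\pm[e])$ explicitly as a subgroup of $\symp(4,\Z)$ and then to recognise the listed elements as a generating set. We use the identification $H_1(S_2;\Z)=\langle e_1,e_2,e_3,e_4\rangle$ coming from a geometric symplectic basis (Lemma~\ref{lem:basis_z_to_curves}). The first point to settle is that the curve $e$ chosen in Step~3 satisfies $[e]=e_3$: it lies in the orbit $O_{e_3}$, it is disjoint from $a,b,c$ (Figure~\ref{fig:curves_s2}), whose classes are $e_1,e_2,e_1+e_3$, and it meets $d$ (with $[d]=e_4$) once, which, after relabelling the handles if needed, forces $[e]=e_3$. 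Using that $\Psi\colon\map(S_2)\to\symp(4,\Z)$ is onto, Proposition~\ref{prop:sym_mat_s2} identifies $\Psi(\lmap_k(S_2))$ with the set of symplectic matrices satisfying condition~(ii), so $\stab_{\Psi(\lmap_k(S_2))}(\pm[e])$ is the set of $A=(a_{ij})\in\symp(4,\Z)$ satisfying Proposition~\ref{prop:sym_mat_s2}(ii) with $Ae_3=\pm e_3$, and we may argue entirely with matrices.

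Next is the structural step. If $A\in\symp(4,\Z)$ satisfies $Ae_3=\varepsilon e_3$ with $\varepsilon\in\{\pm1\}$, then $A$ preserves the symplectic complement $e_3^{\perp}=\langle e_1,e_2,e_3\rangle$, so $A$ has the shape
\[
A=\begin{pmatrix}
a_{11} & a_{12} & 0 & a_{14}\\
a_{21} & a_{22} & 0 & a_{24}\\
a_{31} & a_{32} & \varepsilon & a_{34}\\
0 & 0 & 0 & a_{44}
\end{pmatrix},
\]
and the remaining symplectic relations force $a_{44}=\varepsilon$, $\bigl(\begin{smallmatrix}a_{11}&a_{12}\\a_{21}&a_{22}\end{smallmatrix}\bigr)\in\Sl(2,\Z)$, and $a_{31},a_{32}$ to be explicit $\Z$-linear functions of $a_{14},a_{24}$ (while $a_{34}$ stays free). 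Hence $\stab_{\symp(4,\Z)}(\pm e_3)$ is parametrised by tuples $(P,\varepsilon,a_{14},a_{24},a_{34})\in\Sl(2,\Z)\times\{\pm1\}\times\Z^{3}$, the assignment $A\mapsto(P,\varepsilon)$ is a surjective homomorphism onto $\Sl(2,\Z)\times\{\pm1\}$, and its kernel $U$, read off in the coordinates $(a_{14},a_{24},a_{34})$, is a Heisenberg-type group (a central extension of $\Z\oplus k\Z$ by $\Z$). Imposing Proposition~\ref{prop:sym_mat_s2}(ii) then adds exactly the two congruences $a_{21}\equiv0\pmod k$ and $a_{24}\equiv0\pmod k$ (the condition $\gcd(a_{22},k)=1$ being automatic since $\det P=1$): i.e.\ $P\in\Gamma_0(k)$ and $a_{24}\in k\Z$.

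With this description at hand, the generation claim is verified in two directions. First, each listed element lies in the stabiliser: $\phi(\Gamma_0(k))$ realises all $(P,1,0,0,0)$ with $P\in\Gamma_0(k)$; $-I_{4\times4}$ realises $(-I_2,-1,0,0,0)$ and is central; $\Psi(T_e)$ is the transvection about $e_3$ and realises $(I_2,1,0,0,\pm1)$; $M^{k}$ realises $(I_2,1,0,k,0)$ (since $M$ from~\eqref{eqn:M_matrices} is unipotent with $(M-I)^2=0$, so $M^{k}=I+k(M-I)$, and $M\notin\Psi(\lmap_k(S_2))$ while $M^k$ is); and $\Psi(T_a),\Psi(T_c)$, the transvections about $e_1$ and $e_1+e_3$, fix $e$ because $i(e,a)=i(e,c)=0$. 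Conversely, given an arbitrary $A$ in the stabiliser, multiply by $-I_{4\times4}$ if $\varepsilon=-1$ and then by a suitable element of $\phi(\Gamma_0(k))$ to reduce to $P=I_2$, $\varepsilon=1$, i.e.\ to the kernel $U=\{(I_2,1,p,q,s):q\in k\Z\}$; and $U$ is generated by $M^{k}$ (the $a_{24}$-direction), by $\Psi(T_e)$ (which alone covers the whole central $a_{34}$-direction, so no commutator identity is needed), and by $(I_2,1,1,0,0)$, which is obtained as a short word in $\Psi(T_c)$, $\phi(\Gamma_0(k))$ (used to cancel the $\Sl(2,\Z)$-part of $\Psi(T_c)$), and $\Psi(T_e)$. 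Unwinding the reductions writes $A$ as a word in the listed generators, yielding the equality.

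The step I expect to be the main obstacle is the structural computation: carrying out the symplectic relations cleanly to pin down the block form and the formulae for $a_{31},a_{32}$, and then tracking the Heisenberg group law on $U$ with enough care that the reduction ``modulo $-I_{4\times4}$, $\phi(\Gamma_0(k))$, and $U$'' is transparent and that one sees $U$ is generated by only those three atomic elements. Pinning down $[e]=e_3$ unambiguously is a smaller but genuine prerequisite, and I would record it, with the intersection-number data of Figure~\ref{fig:curves_s2}, as the opening line of the proof.
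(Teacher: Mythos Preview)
Your proposal is correct and follows essentially the same approach as the paper: reduce first by $-I_{4\times4}$ to the case $Ae_3=e_3$, kill the upper-left $2\times2$ block using $\phi(\Gamma_0(k))$, and then observe that the remaining unipotent kernel is generated by $M^k$, $\Psi(T_e)$, and $M'=\Psi(T_eT_aT_c^{-1})$. The only difference is presentational: you phrase the argument as a short exact sequence with Heisenberg-type kernel, while the paper does an explicit step-by-step matrix reduction (their $A\to A_2\to A_3$), but the underlying computation is identical.

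Two small expository points to clean up. First, your parenthetical calling the kernel ``a central extension of $\Z\oplus k\Z$ by $\Z$'' appears \emph{before} you impose Proposition~\ref{prop:sym_mat_s2}(ii); at that stage it is an extension of $\Z\oplus\Z$ by $\Z$, and the $k\Z$ enters only after the congruence $a_{24}\equiv0\pmod k$ is applied. Second, $\Psi(T_e)$ realises the specific tuple $(I_2,1,0,0,1)$, not $(I_2,1,0,0,\pm1)$. Neither affects the argument. Note also that $\Psi(T_a)\in\phi(\Gamma_0(k))$, so that generator is redundant; the paper lists it for convenience.
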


\begin{proof}
Since $-I_{4\times 4}([e])=-[e]$ and $-I_{4\times 4}\in \Psi(\lmap_k(S_2))$, we compute $\stab_{\Psi(\lmap_k(S_2))}([e])$. Since $[e]=e_3\in \Z^4$, for any $A\in \stab_{\Psi(\lmap_k(S_2))}(e_3)$, we have $A\in \symp(4,\Z)$ and
\[
A=\begin{pmatrix}
a_{11} & a_{12} & 0 & a_{14} \\ 
a_{21} & a_{22} & 0 & a_{24} \\ 
a_{31} & a_{32} & 1 & a_{34} \\ 
0 & 0 & 0 & 1
\end{pmatrix},
\]
where $a_{21}, a_{31},a_{24}\in k\Z$ and $\gcd(a_{11},k)=\gcd(a_{22},k)=1$. Since
\[
\begin{pmatrix}
a_{11} & a_{12} \\ 
a_{21} & a_{22}
\end{pmatrix}
\in \Gamma_0(k).
\]
there exists $A_1\in \Gamma_0(k)$ such that for $A_2:=\phi(A_1)A$, we have
\[
A_2=\begin{pmatrix}
1 & 0 & 0 & a_{32} \\ 
0 & 1 & 0 & -a_{31} \\ 
a_{31} & a_{32} & 1 & a_{34} \\ 
0 & 0 & 0 & 1
\end{pmatrix}.
\]
Furthermore, we have
\[
A_3:=\Psi(T_e)^{(a_{32}a_{31}-a_{34})}M^{a_{31}}A_2=
\begin{pmatrix}
1 & 0 & 0 & a_{32} \\ 
0 & 1 & 0 & 0 \\ 
0 & a_{32} & 1 & 0 \\ 
0 & 0 & 0 & 1
\end{pmatrix}.
\]
Now, we observe that $A_3=(\Psi(T_e)\Psi(T_a)\Psi(T_c)^{-1})^{-a_{32}}$. Hence
\[
\stab_{\Psi(\lmap_k(S_2))}(\pm[e])=\langle \phi(\Gamma_0(k)),\Psi(T_a), \Psi(T_c),\Psi(T_e), M^k, -I_{4\times 4} \rangle. 
\]
\end{proof}

\begin{lem}
\label{lem:stab_b_matrices}
We have
\[
\stab_{\Psi(\lmap_k(S_2))}(\pm[b])=\langle \Psi(T_b)^k,\Psi(T_d),\Psi(T_e),M^k,N^k,-I_{4\times 4} \rangle. 
\]
\end{lem}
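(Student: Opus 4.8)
The plan is to mirror the structure of the proof of Lemma~\ref{lem:stab_e_matrices}: we want to identify $\stab_{\Psi(\lmap_k(S_2))}(\pm[b])$ with a subgroup of $\symp(4,\Z)$ cut out by the symplectic criterion of Proposition~\ref{prop:sym_mat_s2} together with the condition of fixing $\pm[b]=\pm e_2$, and then produce generators by a normal-form reduction algorithm. First I would write down the general shape of a matrix $A\in \symp(4,\Z)$ with $Ae_2=e_2$: since $e_2$ is a standard basis vector, fixing it forces the second column to be $e_2$, and symplecticity then forces the second row to have a prescribed form (the coordinate dual to $e_2$ is $e_1$, so $A^{T}J A = J$ pins down row~$1$ as well up to the remaining freedom). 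Combined with the divisibility constraints from Proposition~\ref{prop:sym_mat_s2}(ii)—namely $k\mid a_{2i}$ for $i\ne 2$ and $\gcd(a_{22},k)=1$, which here since $a_{22}=1$ is automatic—this leaves a matrix depending on a bounded number of integer parameters, with certain of them constrained to lie in $k\Z$.

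Next I would perform the reduction. The idea is that $\Psi(T_b)$ (an elementary shear in the $\{e_1,e_2\}$-block) together with $M$ and $N$ (the two shears defined in~\eqref{eqn:M_matrices}, which act on the $e_2$-row/column by "mixing in" $e_3$ and $e_4$) generate enough elementary operations to clear, one at a time, the off-diagonal entries in the row and column attached to $e_2$. The key point is to track which entries are forced to be divisible by $k$: those entries can only be cleared using the $k$-th powers $\Psi(T_b)^k$, $M^k$, $N^k$, whereas the unconstrained entries (those corresponding to the index $2$, dual to $e_1$) can be cleared using $\Psi(T_d)$ and $\Psi(T_e)$, which fix $[b]$ and lie in $\lmap_k(S_2)$ by Proposition~\ref{prop:sym_mat_s2}. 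After successively applying suitable powers of these five matrices on the left, the matrix $A$ should be reduced either to the identity or to $-I_{4\times 4}$ (the latter because $-I$ sends $[b]$ to $-[b]$, and we are stabilizing $\pm[b]$, not $[b]$). As in Lemma~\ref{lem:stab_e_matrices} one should first argue it suffices to handle $\stab_{\Psi(\lmap_k(S_2))}([b])$ and then throw in $-I_{4\times 4}$.

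The bookkeeping obstacle—and the part I expect to be fiddly rather than deep—is verifying at each stage of the reduction that the elementary matrix being applied genuinely lies in $\Psi(\lmap_k(S_2))$ (i.e.\ satisfies the column-$2$ divisibility conditions of Proposition~\ref{prop:sym_mat_s2}), and that applying it does not reintroduce a nonzero non-divisible entry elsewhere; one has to choose the order of operations so that clearing later entries does not disturb earlier ones, exploiting that $T_d,T_e$ commute with the relevant blocks and that $M,N$ act on disjoint symplectic coordinates modulo the $e_2$-direction. The subtlety specific to the $[b]$ case, as opposed to the $[e]$ case, is that $e_2$ is the distinguished "liftable" direction of the cover (the one where $b_{22}\in\Z_k^\times$), so the shear $\Psi(T_b)$ itself is \emph{not} in $\lmap_k(S_2)$—only $\Psi(T_b)^k$ is—hence no element of $\Gamma_0(k)$ appears in this generating set, and the entire row/column cleanup in that block must be done with $k$-th powers. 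Once the normal form is reached, the reverse substitution expresses $A$ as a word in $\Psi(T_b)^k,\Psi(T_d),\Psi(T_e),M^k,N^k,-I_{4\times 4}$, giving the asserted equality; the reverse inclusion is immediate since each listed generator manifestly fixes $\pm[b]$ and lies in $\Psi(\lmap_k(S_2))$ by Proposition~\ref{prop:sym_mat_s2}.
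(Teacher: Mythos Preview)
Your approach is essentially identical to the paper's: reduce to $\stab_{\Psi(\lmap_k(S_2))}([b])$ using $-I_{4\times 4}$, write down the general shape of a symplectic matrix fixing $e_2$ subject to the divisibility constraints of Proposition~\ref{prop:sym_mat_s2}, and then reduce to the identity by successive left-multiplication by the listed generators. One small correction to your description: the role of $\Psi(T_d),\Psi(T_e)$ is not to clear entries ``corresponding to the index~$2$'' but rather to reduce the lower-right $2\times 2$ block---which is an arbitrary element of $\Sl(2,\Z)$, unconstrained by divisibility---to the identity; only after that step does one apply suitable powers of $M^{k},N^{k},\Psi(T_b)^{k}$ to kill the remaining $k$-divisible entries in column~$1$ and row~$2$.
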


\begin{proof}
Since $-I_{4\times 4}([b])=-[b]$ and $-I_{4\times 4}\in \Psi(\lmap_k(S_2))$, we compute $\stab_{\Psi(\lmap_k(S_2))}([b])$. Since $[b]=e_2\in \Z^4$, for any $A\in \stab_{\Psi(\lmap_k(S_2))}(e_2)$, we have $A\in \symp(4,\Z)$ and
\[
A=\begin{pmatrix}
1 & 0 & 0 & 0 \\ 
a_{21} & 1 & a_{23} & a_{24} \\ 
a_{31} & 0 & a_{33} & a_{34} \\ 
a_{41} & 0 & a_{43} & a_{44}
\end{pmatrix},
\]
where $a_{21}, a_{31}, a_{41}, a_{23} a_{24}\in k\Z$. Since
\[
\begin{pmatrix}
a_{33} & a_{34} \\ 
a_{43} & a_{44}
\end{pmatrix}\in \Sl(2,\Z)
\]
there exists $A_1\in \langle \Psi(T_d),\Psi(T_e)\rangle$ such that
\[
A_1A=
\begin{pmatrix}
1 & 0 & 0 & 0 \\ 
a_{21} & 1 & a_{23} & a_{24} \\ 
-a_{24} & 0 & 1 & 0 \\ 
a_{23} & 0 & 0 & 1
\end{pmatrix}.
\]
By using matrices $\Psi(T_b)$, $M$, and $N$ from Equation~(\ref{eqn:M_matrices}), we have $\Psi(T_b)^{A_2(2,1)}A_2=I_{4\times 4}$, where $A_2=M^{-a_{24}}N^{-a_{23}}A_1A$ (here $A_2(2,1)$ denoted the second entry in first column of $A_2$). Hence, we have
\[
\stab_{\Psi(\lmap_k(S_2))}(\pm[b])=\langle \Psi(T_b)^k,\Psi(T_d),\Psi(T_e),M^k,N^k,-I_{4\times 4} \rangle. 
\]
\end{proof}

It can be seen that $M=\Psi(T_aT_bT_a^{-1}T_cT_b^{-1}T_a^{-1}T_e^{-1})$ and $N=\Psi(T_dT_eT_d)M\Psi(T_dT_eT_d)^{-1}$. For Step 6 of Algorithm~\ref{algo}, we finally write $\stab_{\lmap_k(S_2)}(\gamma)$ for $\gamma=b,e$.

\begin{cor}
\label{cor:stab_e_lmod}
We have
\[
\stab_{\lmap_k(S_2)}(e)=\langle \s\cup \{T_a,T_c,T_e,\iota\}\cup \{T_b^jT_c^{-1}(T_aT_b)^6T_cT_b^{-j}\mid -k< j\leq 0\} \rangle.
\]
\end{cor}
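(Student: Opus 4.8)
The plan is to carry out Step~6 of Algorithm~\ref{algo} for the curve $e$. Apply the short exact sequence~\eqref{ses:stab_lmod} with $\gamma=e$, so that $\stab_{\lmap_k(S_2)}(e)$ is an extension of $\stab_{\Psi(\lmap_k(S_2))}(\pm[e])$ by $\stab_{\I(S_2)}(e)$; thus a generating set is obtained by taking the generators of $\stab_{\I(S_2)}(e)$ furnished by Lemma~\ref{lem:stab_e_torelli} together with a choice of $\Psi$-preimages, inside $\stab_{\lmap_k(S_2)}(e)$, of the generators of $\stab_{\Psi(\lmap_k(S_2))}(\pm[e])$ furnished by Lemma~\ref{lem:stab_e_matrices}. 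For the latter I would take: the set $\s$, whose $\Psi$-image generates $\phi(\Gamma_0(k))$; $T_a$, $T_c$, $T_e$, lifting $\Psi(T_a)$, $\Psi(T_c)$, $\Psi(T_e)$; $\iota$, lifting $-I_{4\times 4}$; and $w^k$, where $w:=T_aT_bT_a^{-1}T_cT_b^{-1}T_a^{-1}T_e^{-1}$ has $\Psi(w)=M$ (as recorded after Lemma~\ref{lem:stab_b_matrices}), lifting $M^k$. Each of these elements stabilizes $e$: the curves $a$, $b$, $c$ are disjoint from $e$ by Figure~\ref{fig:curves_s2}, so every word in $T_a$, $T_b$, $T_c$, $T_e$ --- in particular each element of $\s$, and $w^k$ --- fixes $e$; and $\iota$ is central in $\map(S_2)$, whence $T_{\iota(e)}=\iota T_e\iota^{-1}=T_e$ and so $\iota(e)=e$. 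Therefore $\stab_{\lmap_k(S_2)}(e)=\langle\s\cup\{T_a,T_c,T_e,\iota,w^k\}\cup\{g_{m,n}:m,n\in\Z\}\rangle$, where $g_{m,n}:=T_a^{-m-1}T_c^mT_b^{-1}T_c^n(T_aT_b)^6T_c^{-n}T_bT_c^{-m}T_a^{m+1}$.

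The second step is to trim the infinite family $\{g_{m,n}\}$ and the generator $w^k$. Since $T_a$ and $T_c$ commute, $g_{m,n}=(T_a^{-m-1}T_c^m)\,h_n\,(T_a^{-m-1}T_c^m)^{-1}$ with $h_n:=T_b^{-1}T_c^n(T_aT_b)^6T_c^{-n}T_b$, and the conjugating element $T_a^{-m-1}T_c^m$ is already a generator, so the $g_{m,n}$ may be replaced by the $h_n$. Using that $(T_aT_b)^6$ is central in $\langle T_a,T_b\rangle$, one checks $h_0=(T_aT_b)^6$ and $h_{n+1}=u\,h_n\,u^{-1}$, where $u:=T_b^{-1}T_cT_b=T_cT_bT_c^{-1}$ (the last equality is the braid relation, as $i(b,c)=1$); consequently $u^n=T_cT_b^nT_c^{-1}$ and $h_n=T_c\,\theta_n\,T_c^{-1}$ with $\theta_n:=T_b^nT_c^{-1}(T_aT_b)^6T_cT_b^{-n}$, so the $h_n$ may be replaced by the $\theta_n$. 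Finally $\theta_n=T_b^n\theta_0T_b^{-n}$, and writing $n=qk+j$ with $-k<j\leq 0$ gives $\theta_n=(T_b^k)^q\,\theta_j\,(T_b^k)^{-q}$; since $T_b^k\in\s$, only the $\theta_j$ with $-k<j\leq 0$ are needed. At this stage $\stab_{\lmap_k(S_2)}(e)=\langle\s\cup\{T_a,T_c,T_e,\iota,w^k\}\cup\{\theta_j:-k<j\leq 0\}\rangle$, that is, the asserted set with the single extra generator $w^k$.

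Eliminating $w^k$ is the main obstacle. Because $\Psi(w^k)=M^k$, it suffices to show $M^k\in\langle\phi(\Gamma_0(k)),\Psi(T_a),\Psi(T_c),\Psi(T_e)\rangle$: granting this, pick $v\in\langle\s,T_a,T_c,T_e\rangle$ with $\Psi(v)=M^k$; then $w^kv^{-1}\in\ker\Psi=\I(S_2)$, and since $w^k$ and $v$ both fix $e$ we get $w^kv^{-1}\in\stab_{\I(S_2)}(e)$, which by the reduction of the previous paragraph already lies in $\langle\s,T_a,T_c,T_e,\iota,\{\theta_j:-k<j\leq 0\}\rangle$, so $w^k=(w^kv^{-1})v$ does too. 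To verify the inclusion --- equivalently, that $M^k$ is a redundant generator in Lemma~\ref{lem:stab_e_matrices} --- I would compute directly in $\stab_{\symp(4,\Z)}([e])$, in which $\Psi(T_e)$ is central; the key point is that conjugating $\Psi(T_cT_a^{-1})$ by $\Psi(T_b^k)\in\phi(\Gamma_0(k))$ shifts its off-diagonal part by a multiple of $k$, and a short manipulation then yields the identity $M^k=\Psi\bigl(T_b^kT_cT_a^{-1}T_b^{-k}T_aT_c^{-1}T_e^k\bigr)$. I expect essentially all of the effort to sit in this last symplectic identity; the remainder is routine bookkeeping with braid relations and with the two exact sequences already in place.
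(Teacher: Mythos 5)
Your proposal is correct, and its skeleton is the paper's: apply the exact sequence~\eqref{ses:stab_lmod} to $\gamma=e$ with Lemmas~\ref{lem:stab_e_torelli} and~\ref{lem:stab_e_matrices}, then trim the Torelli family exactly as the paper does (strip the conjugator $T_a^{-m-1}T_c^m$, use $[T_b,(T_aT_b)^6]=1$ and $T_b^{-1}T_cT_b=T_cT_bT_c^{-1}$, and reduce the $T_b$-exponent modulo $k$ via $T_b^k\in\s$). The only genuine divergence is how the lift of $M^k$ is discarded. The paper's lift $T_aT_bT_a^{-k}T_c^kT_b^{-1}T_a^{-1}T_e^{-k}$ is literally your $w^k$ (since $T_e$ commutes with $T_a,T_b,T_c$ and $(ghg^{-1})^k=gh^kg^{-1}$), and the paper simply rewrites it inside $\map(S_2)$ with the braid relations $T_bT_a^{-k}T_b^{-1}=T_a^{-1}T_b^{-k}T_a$ and $T_bT_c^kT_b^{-1}=T_c^{-1}T_b^kT_c$ as $T_b^{-k}T_aT_c^{-1}T_b^kT_cT_a^{-1}T_e^{-k}$, a word in $T_b^k\in\s$, $T_a$, $T_c$, $T_e$; no symplectic computation and no Torelli bookkeeping are needed. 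Your route---prove a matrix identity in $\stab_{\symp(4,\Z)}([e])$ and then absorb the discrepancy $w^kv^{-1}\in\stab_{\I(S_2)}(e)$ into the already-trimmed Torelli generators---is valid, and your key identity does hold: with the sign conventions that reproduce the paper's matrix $M$, both $\Psi(T_b^kT_cT_a^{-1}T_b^{-k}T_aT_c^{-1}T_e^k)$ and the paper's word evaluate to the unipotent matrix $M^k$ (here $M-I_{4\times 4}$ squares to zero, so $M^k=I_{4\times 4}+k(M-I_{4\times 4})$, which is what both products give). So nothing in your argument fails; it is just longer than necessary, because the relation you want already holds on the nose in $\map(S_2)$. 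One caveat you share with the paper, and handle more explicitly than it does: the statement needs $\s$ to be chosen inside $\stab_{\lmap_k(S_2)}(e)$, e.g.\ as words in $T_a,T_b$ (as in the $k=2,3$ applications), which is exactly the assumption you state when checking your lifts fix $e$.
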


\begin{proof}
From short-exact sequence~(\ref{ses:stab_lmod}) and Lemmas~\ref{lem:stab_e_torelli}-\ref{lem:stab_e_matrices}, it follows that $\stab_{\lmap_k(S_2)}(e)=\langle \s\cup \s'\cup \s''\rangle$, where $$\s'=\{T_a,T_c,T_e,T_aT_bT_a^{-k}T_c^kT_b^{-1}T_a^{-1}T_e^{-k},\iota\}$$ and $$\s''=\{T_a^{-m-1}T_c^mT_b^{-1}T_c^n(T_aT_b)^6T_c^{-n}T_bT_c^{-m}T_a^{m+1}\mid m,n\in \Z\}.$$
We observe that
\begin{align*}
T_aT_bT_a^{-k}T_c^kT_b^{-1}T_a^{-1}T_e^{-k}&= T_a(T_bT_a^{-k}T_b^{-1})(T_bT_c^kT_b^{-1})T_a^{-1}T_e^{-k}\\
&=T_b^{-k}T_aT_c^{-1}T_b^kT_cT_a^{-1}T_e^k.
\end{align*}
Since $T_b^k\in \s$ and $T_a,T_c,T_e\in  \s'$, we have $\stab_{\lmap_k(S_2)}(e)=\langle \s \cup \{T_a,T_c,T_e,\iota\}\cup \s''' \rangle$, where $$\s'''=\{T_b^{-1}T_c^n(T_aT_b)^6T_c^{-n}T_b\mid n\in \Z\}.$$ Furthermore, since $[T_b,(T_aT_b)^6]=1$ we have
\begin{align*}
T_b^{-1}T_c^n(T_aT_b)^6T_c^{-n}T_b&=(T_b^{-1}T_c^nT_b)(T_aT_b)^6(T_b^{-1}T_c^{-n}T_b)\\
&=T_cT_b^nT_c^{-1}(T_aT_b)^6T_cT_b^{-n}T_c^{-1}.
\end{align*}
Since $T_c\in \s'$ and $T_b^k\in \s$, we have
\[
\stab_{\lmap_k(S_2)}(e)=\langle \s\cup \{T_a,T_c,T_e,\iota\}\cup \{T_b^jT_c^{-1}(T_aT_b)^6T_cT_b^{-j}\mid -k< j\leq 0\} \rangle.
\]
\end{proof}

\begin{cor}
\label{cor:stab_b_lmod}
We have $\stab_{\lmap_k(S_2)}(b)=\langle \{T_b^k,T_d,T_e,\iota,T_aT_c^{-1}T_b^kT_cT_a^{-1}\}\cup \s' \rangle$, where
\[
\s'=\{T_cT_dT_eT_d^{m+1}T_a^{-1}T_b^{-m-1}T_c^nT_d^{-1}(T_bT_c)^6(T_cT_dT_eT_d^{m+1}T_a^{-1}T_b^{-m-1}T_c^nT_d^{-1})^{-1}
\mid m,n\in \Z\}.
\]
\end{cor}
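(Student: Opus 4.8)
The plan is to run the short-exact sequence~(\ref{ses:stab_lmod}) with $\gamma=b$,
\[
1 \longrightarrow \stab_{\I(S_2)}(b) \longrightarrow \stab_{\lmap_k(S_2)}(b) \longrightarrow \stab_{\Psi(\lmap_k(S_2))}(\pm[b]) \longrightarrow 1,
\]
exactly as in the proof of Corollary~\ref{cor:stab_e_lmod}. A generating set for $\stab_{\lmap_k(S_2)}(b)$ is then the union of a generating set for the kernel with a choice of lifts, inside $\stab_{\lmap_k(S_2)}(b)$, of a generating set for the quotient. Lemma~\ref{lem:stab_b_torelli} supplies the kernel part verbatim: it is exactly the set $\s'$ in the statement. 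Lemma~\ref{lem:stab_b_matrices} gives the quotient generators $\Psi(T_b)^k,\Psi(T_d),\Psi(T_e),M^k,N^k,-I_{4\times4}$, so the work reduces to lifting these six matrices.

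Four lifts are immediate: $T_b^k,T_d,T_e\in\lmap_k(S_2)$ by Proposition~\ref{prop:sym_mat_s2}, $\iota\in\lmap_k(S_2)$ by Lemma~\ref{lem:stab_a}, these map to $\Psi(T_b)^k,\Psi(T_d),\Psi(T_e),-I_{4\times4}$, and each fixes the isotopy class $b$ since $i(b,d)=i(b,e)=0$, $T_b$ fixes $b$, and $\iota$ fixes every isotopy class of simple closed curve. For $M^k$ I would reuse the manipulation from the proof of Corollary~\ref{cor:stab_e_lmod}: from $M=\Psi(T_aT_bT_a^{-1}T_cT_b^{-1}T_a^{-1}T_e^{-1})$ one gets $(T_aT_bT_a^{-1}T_cT_b^{-1}T_a^{-1}T_e^{-1})^k=T_aT_bT_a^{-k}T_c^kT_b^{-1}T_a^{-1}T_e^{-k}$, and the braid relations among $T_a,T_b$ and among $T_b,T_c$ together with the centrality of $T_e$ rewrite this as $T_b^{-k}\bigl(T_aT_c^{-1}T_b^kT_cT_a^{-1}\bigr)T_e^{-k}$. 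Thus $T_aT_c^{-1}T_b^kT_cT_a^{-1}$ differs from a lift of $M^k$ only by the already-chosen lifts $T_b^k$ and $T_e$ of $\Psi(T_b)^k$ and $\Psi(T_e)$, so it may replace a lift of $M^k$ in the generating set. Finally $N^k=\Psi(T_dT_eT_d)\,M^k\,\Psi(T_dT_eT_d)^{-1}$ by the displayed formula for $N$, so conjugating the lift of $M^k$ by $T_dT_eT_d$ yields a lift of $N^k$ already lying in $\langle T_b^k,T_d,T_e,T_aT_c^{-1}T_b^kT_cT_a^{-1}\rangle$; hence $N^k$ contributes nothing new, and collecting the pieces gives the asserted generating set.

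The step needing genuine care is that $T_aT_c^{-1}T_b^kT_cT_a^{-1}$---and hence each chosen lift---really lies in $\stab_{\lmap_k(S_2)}(b)$, i.e.\ fixes the isotopy class $b$ rather than only $[b]\in H_1(S_2;\Z)$. Writing $T_aT_c^{-1}T_b^kT_cT_a^{-1}=T_\gamma^k$ with $\gamma:=T_aT_c^{-1}(b)$, it suffices to prove $i(\gamma,b)=0$, equivalently $i\bigl(T_c^{-1}(b),T_a^{-1}(b)\bigr)=0$. On homology, $[\gamma]=[b]\pm[e]$ (with $[b]=e_2$, $[e]=e_3$ for the chain $a,b,c,d,e$), so $\hat{i}([\gamma],[b])=0$; since a vanishing algebraic intersection number does not force geometric disjointness, I would then argue directly on the surface: as $a$ and $c$ are disjoint, $T_a^{-1}$ and $T_c^{-1}$ are supported in disjoint annular neighborhoods of $a$ and $c$, so $T_c^{-1}(b)$ and $T_a^{-1}(b)$ agree with $b$ outside those neighborhoods, and the two transverse intersection points appearing in these standard representatives cobound a bigon along an arc of $b$ and cancel---this is transparent from the picture of $a,b,c$ (cf.\ Figure~\ref{fig:curves_s2}). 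I expect this bigon check to be the only real (and modest) obstacle; the remaining points---validity of the braid rewriting and of $(T_aT_bT_a^{-1}T_cT_b^{-1}T_a^{-1}T_e^{-1})^k=T_aT_bT_a^{-k}T_c^kT_b^{-1}T_a^{-1}T_e^{-k}$---are the same bookkeeping already done in the $\stab_{\lmap_k(S_2)}(e)$ case.
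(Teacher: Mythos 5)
Your proposal is correct and takes essentially the same route as the paper: the short exact sequence~(\ref{ses:stab_lmod}) for $\gamma=b$, the kernel generators from Lemma~\ref{lem:stab_b_torelli}, the quotient generators from Lemma~\ref{lem:stab_b_matrices} with lifts $T_b^k,T_d,T_e,\iota$, the same braid-relation rewriting producing $T_aT_c^{-1}T_b^kT_cT_a^{-1}$ from the lift of $M^k$, and the same observation that the lift of $N^k$ is redundant, being a $T_dT_eT_d$-conjugate of the $M^k$-lift. Your additional check (left implicit in the paper) that $T_aT_c^{-1}T_b^kT_cT_a^{-1}=T_{\gamma}^k$ with $\gamma=T_aT_c^{-1}(b)$ really fixes the isotopy class of $b$ is correct, and can be seen without the bigon inspection: since $T_c T_b(c)=b$ gives $\gamma=T_aT_b(c)$ while $b=T_aT_b(a)$, one has $i(\gamma,b)=i(c,a)=0$.
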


\begin{proof}
From short-exact sequence~(\ref{ses:stab_lmod}) and Lemmas~\ref{lem:stab_b_torelli}-\ref{lem:stab_b_matrices}, it follows that $$\stab_{\lmap_k(S_2)}(b)=\langle \{T_b^k,T_d,T_e,\iota\}\cup \s'\cup \s'' \rangle,$$
where
$$\s''=\{T_aT_bT_a^{-k}T_c^kT_b^{-1}T_a^{-1}T_e^k, T_dT_eT_dT_aT_bT_a^{-k}T_c^kT_b^{-1}T_a^{-1}T_e^k(T_dT_eT_d)^{-1}\}$$
and
$$\s'=\{T_cT_dT_eT_d^{m+1}T_a^{-1}T_b^{-m-1}T_c^nT_d^{-1}(T_bT_c)^6(T_cT_dT_eT_d^{m+1}T_a^{-1}T_b^{-m-1}T_c^nT_d^{-1})^{-1}
\mid m,n\in \Z\}.$$
Since $T_aT_bT_a^{-k}T_c^kT_b^{-1}T_a^{-1}T_e^{k}=T_b^{-k}T_aT_c^{-1}T_b^kT_cT_a^{-1}T_e^k$ and $T_b^k\in \stab_{\lmap_k(S_2)}(b)$, we have
$$\stab_{\lmap_k(S_2)}(b)=\langle \{T_b^k,T_d,T_e,\iota,T_aT_c^{-1}T_b^kT_cT_a^{-1}\} \cup \s'\rangle.$$
\end{proof}

For Steps 7-8 of Algorithm~\ref{algo}, we will use Theorem~\ref{thm:genset_gpaction_graph} to write a generating set for $\lmap_k(S_2)$.   

\begin{theorem}
\label{thm:genset_lmod}
For a prime number $k$ and the $k$-sheeted regular cyclic cover $p_k:S_{k+1}\to S_2$, we have $\lmap_k(S_2)=\langle \s\cup \{T_a,T_b^k,T_c,T_d,T_e,\iota\}\cup \s'\cup \s'' \rangle$, where $\s'=\{T_b^{1-j}T_aT_b^{1-\bar{j}}\mid 1\leq j<k\}$ and $\s''=\{(T_bT_c)^6,T_b^iT_c^jT_d(T_bT_c)^6T_d^{-1}T_c^{-j}T_b^{-i}\mid 1\leq i,j<k\}$.
\end{theorem}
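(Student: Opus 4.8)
The plan is to execute Steps~7--8 of Algorithm~\ref{algo}: feed the quotient graph and vertex stabilizers into Theorem~\ref{thm:genset_gpaction_graph} and then simplify the output using relations in $\map(S_2)$. Apply Theorem~\ref{thm:genset_gpaction_graph} to $G=\lmap_k(S_2)$ acting on $X=\N(S_2)$. By Lemma~\ref{lem:graph_corres}, the quotient $Y=\overline{\N(S_2)}$ is isomorphic to $\overline{\N_k(S_2)}$, and Lemmas~\ref{lem:vertices_modk_graph}--\ref{lem:edhes_modk_graph} describe $Y$ completely: vertices $\overline{e_1},\overline{e_2},\overline{e_3}$, the two non-loop edges $\{\overline{e_1},\overline{e_2}\}$ and $\{\overline{e_2},\overline{e_3}\}$, one loop at $\overline{e_3}$, and $2(k-1)$ loops at $\overline{e_2}$. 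I would take the maximal tree $T$ consisting of the two non-loop edges and lift it to $\widetilde{T}$ on the curves $a,b,c$; this is legitimate because $i(a,b)=i(b,c)=1$, and note that $e$ cannot serve as a lift of $\overline{e_3}$ since $i(b,e)=0$, which forces the relevant stabilizer to be $\stab_{\lmap_k(S_2)}(c)$. Since $Y$ has no non-loop edge outside $T$, condition~(ii) of Theorem~\ref{thm:genset_gpaction_graph} is vacuous, while every loop of $Y$ admits a non-loop lift through the appropriate vertex of $\widetilde{T}$, so the theorem yields
\[
\lmap_k(S_2)=\bigl\langle\,\stab_{\lmap_k(S_2)}(a)\cup\stab_{\lmap_k(S_2)}(b)\cup\stab_{\lmap_k(S_2)}(c)\cup\{\f_1^3\}\cup\{\f_1^2,\dots,\f_{2(k-1)}^2\}\,\bigr\rangle,
\]
where each $\f$ carries $b$ (resp.\ $c$) to the second endpoint of the chosen loop-lift.

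Next I would substitute the concrete data and strip away redundancy. Lemma~\ref{lem:stab_a} gives $\stab_{\lmap_k(S_2)}(a)=\langle T_a,T_c,T_d,T_e,\iota\rangle$, accounting for the generators $T_a,T_c,T_d,T_e,\iota$. For $\overline{e_3}$: since $F=T_dT_cT_eT_d$ is a word in $T_c,T_d,T_e$ it lies in $\stab_{\lmap_k(S_2)}(a)$, and $F(e)=c$, so $\stab_{\lmap_k(S_2)}(c)=F\,\stab_{\lmap_k(S_2)}(e)\,F^{-1}$ contributes nothing beyond $\stab_{\lmap_k(S_2)}(a)$ and the finite generating set for $\stab_{\lmap_k(S_2)}(e)$ of Corollary~\ref{cor:stab_e_lmod} --- this is where $\s$ enters, its remaining generators $T_b^jT_c^{-1}(T_aT_b)^6T_cT_b^{-j}$ being recorded for later. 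The unique loop at $\overline{e_3}$ has non-loop lift $\{c,d\}$ at $c$, and since $i(c,d)=1$ one may take $\f_1^3=T_cT_d\in\stab_{\lmap_k(S_2)}(a)$ (as $a$ is disjoint from $c$ and $d$), so $\f_1^3$ is redundant. Finally $T_b^k$ comes from $\stab_{\lmap_k(S_2)}(b)$ (Corollary~\ref{cor:stab_b_lmod}), whose remaining finite generators $T_d,T_e,\iota$ are already present and $T_aT_c^{-1}T_b^kT_cT_a^{-1}$ is absorbed after observing $\Psi(T_aT_c^{-1}T_b^kT_cT_a^{-1})=\Psi(T_a)\Psi(T_b)^k\Psi(T_a)^{-1}\in\phi(\Gamma_0(k))$; the point to keep in mind is that Corollary~\ref{cor:stab_b_lmod} also presents $\stab_{\lmap_k(S_2)}(b)$ with an \emph{infinite} family indexed by $m,n\in\Z$, which will have to be cut down.

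For the loops at $\overline{e_2}$: the braid relation gives $T_a(b)=T_b^{-1}(a)$, hence $\bigl(T_b^{1-j}T_aT_b^{1-\bar{j}}\bigr)(b)=T_b^{-j}(a)$, and a one-line symplectic computation (using Proposition~\ref{prop:sym_mat_s2}) shows the $\Sl(2,\Z)$-block of $T_b^{1-j}T_aT_b^{1-\bar{j}}$ is $\left(\begin{smallmatrix}\bar{j}&-1\\1-j\bar{j}&j\end{smallmatrix}\right)\in\Gamma_0(k)$, so $T_b^{1-j}T_aT_b^{1-\bar{j}}\in\lmap_k(S_2)$; running $j$ over $\{1,\dots,k-1\}$ realizes the first family of $k-1$ loops at $\overline{e_2}$ and gives precisely $\s'$. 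The set $\s''$ must then package everything left over: the loop elements for the second family of $k-1$ loops at $\overline{e_2}$, the $(T_aT_b)^6$-conjugates from $\stab_{\lmap_k(S_2)}(e)$, and a finite reduction of the infinite family in Corollary~\ref{cor:stab_b_lmod} --- the reduction being available because $T_c,T_d$ and all their powers already lie in $\lmap_k(S_2)$ while $T_b^k$ is among the generators, so conjugation lets one reduce the exponent of $T_b$ in the conjugating words modulo $k$.

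I expect the main obstacle to be exactly this final consolidation: verifying, via the $2$-chain relation $(T_xT_y)^6=T_{\partial N(x\cup y)}$, the conjugation identity $T_{F(x)}=FT_xF^{-1}$, and the standard commutation and braid relations among Dehn twists, that all of these contributions collapse into the single finite family $\s''$ --- in particular that the second family of loops at $\overline{e_2}$ forces no genuinely new generator, and that the $(T_aT_b)^6$-conjugates and the reduced $\stab_{\lmap_k(S_2)}(b)$-family produce the same elements up to the relations. This is a finite but delicate piece of bookkeeping; the conceptual content of the proof is entirely carried by Theorem~\ref{thm:genset_gpaction_graph} together with the computations of the quotient graph and the vertex stabilizers performed above.
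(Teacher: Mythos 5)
Your setup is the paper's own: Steps 7--8 of Algorithm~\ref{algo}, with Theorem~\ref{thm:genset_gpaction_graph} applied to the lifted tree on $a,b,c$, Lemma~\ref{lem:stab_a} and Corollaries~\ref{cor:stab_e_lmod}--\ref{cor:stab_b_lmod} as stabilizer input, $T_cT_d$ for the loop at $\overline{e_3}$, and $T_b^{1-j}T_aT_b^{1-\bar{j}}$ realizing the first family of loops at $\overline{e_2}$; the observation that $\stab_{\lmap_k(S_2)}(c)=F\stab_{\lmap_k(S_2)}(e)F^{-1}$ with $F=T_dT_cT_eT_d$ a word in generators is also exactly what the paper uses. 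One small correction: $T_aT_c^{-1}T_b^kT_cT_a^{-1}$ is absorbed simply because it is literally the word $T_a\,T_c^{-1}\,T_b^k\,T_c\,T_a^{-1}$ in the generators; your justification via $\Psi(\cdot)\in\phi(\Gamma_0(k))$ is not by itself valid, since equality of symplectic images only determines an element up to $\I(S_2)$, and containment of Torelli elements in the subgroup is precisely what is not yet known.

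The genuine gap is that you stop where the content of the theorem begins. First, the second family of $k-1$ loops at $\overline{e_2}$ requires explicit elements of the claimed subgroup carrying $b$ to $T_b^{-j}(c)$; these cannot be supplied by $\s''$ itself (its elements are conjugates of a separating twist, hence lie in $\I(S_2)$ and fix $[b]_k$), but they are easy to exhibit from $\s'\cup\{T_a,T_c\}$ --- the paper uses $T_b^{1-j}T_aT_b^{1-\bar{j}}T_a^{-1}T_c$. Second, and centrally, Corollary~\ref{cor:stab_b_lmod} presents $\stab_{\lmap_k(S_2)}(b)$ with an infinite family indexed by $m,n\in\Z$, and Corollary~\ref{cor:stab_e_lmod} contributes the conjugates $T_b^jT_c^{-1}(T_aT_b)^6T_cT_b^{-j}$; the theorem asserts that all of this collapses into the specific finite set $\s''$ (together with $(T_aT_b)^6=(T_dT_e)^6$, a word in $T_d,T_e$). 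Your proposed mechanism --- strip leading powers of $T_c,T_d,T_e,T_a$ and reduce $T_b$-exponents modulo $k$ using $T_b^k$ --- only disposes of the $m$-direction: after that reduction the unbounded $T_c^n$ sits in the middle of the conjugating word $T_b^iT_c^nT_d^{-1}$, flanked by $T_b^i$ with $0<|i|<k$, which is not a generator, so it cannot be stripped. The bulk of the paper's proof consists exactly of the braid-relation manipulations that fix this: conjugating by the elements of $\s'$ and reshuffling so that $T_c^n$ becomes a power of $T_b$ adjacent to removable generators, then a second pass bringing everything to the normal form $T_b^iT_c^jT_d(T_bT_c)^6T_d^{-1}T_c^{-j}T_b^{-i}$, and a similar argument converting the $(T_aT_b)^6$-conjugates into $\s'$-conjugates of $(T_bT_c)^6$. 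Without these computations (or a substitute for them) the claim that this particular finite set generates $\lmap_k(S_2)$ is not established; what you have is a correct plan, identical in route to the paper's, with its decisive verification deferred.
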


\begin{proof}
We observe that the union of edges $\{\overline{e_1},\overline{e_2}\}$ and $\{\overline{e_2},\overline{e_3}\}$ forms a maximal tree in $\overline{\N_k(S_2)}$. In the complement of this maximal tree, there are only loops. For these loops in $\overline{\N_k(S_2)}$, we have already chosen edges in $\N(S_2)$ while implementing the Step 3 of Algorithm~\ref{algo}. They are as follows: for loops in $\overline{\N_k(S_2)}$, we chose edges $\{c,d\}$, $\{b,T_b^{-i_1}(a)\}$, and $\{b,T_b^{-i_2}(c)\}$ in $\N(S_2)$ corresponding to orbits $O_{\{e_3,e_4\}}$, $O_{\{e_2,e_1+i_1e_2\}}$, and $O_{\{e_2,e_1+i_2e_2+e_3\}}$, respectively, where $1\leq i_1,i_2<k$.

By Theorem~\ref{thm:genset_gpaction_graph}, for each loop $\{v,v\}\in E(\overline{\N_k(S_2)})$ at any vertex $v$ and a representative edge $\{v_1,v_2\}\in E(\N(S_2))$, we need an element $F\in \lmap_k(S_2)$ such that $F(v_1)=v_2$. For the edge $\{c,d\}$, we have $T_cT_d(c)=d$. For the edge $\{b,T_b^{-i_1}(a)\}$, we have
$$T_b^{1-i_1}T_aT_b^{1-\bar{i_1}}(b)=T_b^{-i_1}T_bT_a(b)=T_b^{-i_1}(a).$$ Similarly, for the edge $\{b,T_b^{-i_2}(c)\}$, we have
$$T_b^{1-i_2}T_aT_b^{1-\bar{i_2}}T_a^{-1}T_c(b)=T_b^{-i_2}T_a^{1-\bar{i_2}}T_bT_c(b)=T_b^{-i_2}(c).$$
By Proposition~\ref{prop:sym_mat_s2}, for $1\leq j<k$, we have $T_b^{1-j}T_aT_b^{1-\bar{j}}\in \lmap_k(S_2)$. Hence, by Theorem~\ref{thm:genset_gpaction_graph}, we get
\[
\lmap_k(S_2)=\langle \stab_{\lmap_k(S_2)}(a)\cup \stab_{\lmap_k(S_2)}(b)\cup \stab_{\lmap_k(S_2)}(c)\cup \s_1\rangle,
\]
where $\s_1=\{T_cT_d,T_b^{1-j}T_aT_b^{1-\bar{j}}\mid 1\leq j<k\}$. It follows from Lemma~\ref{lem:stab_a} and Corollaries~\ref{cor:stab_e_lmod}-\ref{cor:stab_b_lmod} that $\lmap_k(S_2)=\langle \s\cup \s_1\cup \s_2\cup \s_3\cup \s_1 \rangle$, where $\s_1=\{(T_aT_b)^6,T_b^iT_c^{-1}(T_aT_b)^6T_cT_b^{-i}\mid -k< i\leq -1\}$, $\s_2=\{T_a,T_b^k,T_c,T_d,T_e,\iota\}$, and $\s_3=\{(T_bT_c)^6,T_b^iT_c^nT_d^{-1}(T_bT_c)^6T_dT_c^{-n}T_b^{-i}\mid -k< i\leq -1, n\in \Z\}$. For $n\in \Z$ and $i+j=0$, we choose $T_b^iT_c^nT_d^{-1}(T_bT_c)^6T_dT_c^{-n}T_b^{-i}\in \s_3$ and $T_b^{1-j}T_aT_b^{1-\bar{j}}\in \s_1$. We have
\begin{align*}
&T_b^iT_c^nT_d^{-1}(T_bT_c)^6T_dT_c^{-n}T_b^{-i}\\
&=(T_b^{1-j}T_aT_b^{1-\bar{j}})(T_b^{1-j}T_aT_b^{1-\bar{j}})^{-1}T_b^iT_c^nT_d^{-1}(T_bT_c)^6T_dT_c^{-n}T_b^{-i}(T_b^{1-j}T_aT_b^{1-\bar{j}})(T_b^{1-j}T_aT_b^{1-\bar{j}})^{-1}\\
&=(T_b^{1-j}T_aT_b^{1-\bar{j}}T_a^{-1})(T_aT_b^{\bar{j}-1}T_a^{-1})T_b^{-1}T_c^nT_d^{-1}(T_bT_c)^6((T_b^{1-j}T_aT_b^{1-\bar{j}}T_a^{-1})(T_aT_b^{\bar{j}-1}T_a^{-1})T_b^{-1}T_c^nT_d^{-1})^{-1}\\
&=(T_b^{1-j}T_aT_b^{1-\bar{j}}T_a^{-1})(T_b^{-1}T_a^{\bar{j}-1}T_c^nT_d^{-1}(T_bT_c)^6((T_b^{1-j}T_aT_b^{1-\bar{j}}T_a^{-1})(T_b^{-1}T_a^{\bar{j}-1}T_c^nT_d^{-1})^{-1}\\
&=(T_b^{1-j}T_aT_b^{1-\bar{j}}T_a^{-1})(T_b^{-1}T_c^nT_b)T_b^{-1}T_a^{\bar{j}-1}T_d^{-1}(T_bT_c)^6((T_b^{1-j}T_aT_b^{1-\bar{j}}T_a^{-1})(T_b^{-1}T_c^nT_b)T_b^{-1}T_a^{\bar{j}-1}T_d^{-1})^{-1}\\
&=(T_b^{1-j}T_aT_b^{1-\bar{j}}T_a^{-1}T_c)T_b^nT_c^{-1}T_b^{-1}T_a^{\bar{j}-1}T_d^{-1}(T_bT_c)^6((T_b^{1-j}T_aT_b^{1-\bar{j}}T_a^{-1}T_c)T_b^nT_c^{-1}T_b^{-1}T_a^{\bar{j}-1}T_d^{-1})^{-1}.
\end{align*}
In the generating set for $\lmap_k(S_2)$ described before, by the above computations, we can replace the infinite set $\s_3$ with the finite set
\[
\s_3'=\{(T_bT_c)^6,T_b^iT_c^{-1}T_b^{-1}T_a^{j-1}T_d^{-1}(T_bT_c)^6(T_b^iT_c^{-1}T_b^{-1}T_a^{j-1}T_d^{-1})^{-1}\mid 1\leq j<k,-k<i\leq 0\}.
\]
We further simply elements of $\s_3'$. For $i\neq 0$ and $i+\ell=0$, we choose $T_b^{1-\ell}T_aT_b^{1-\bar{\ell}}\in \s_1$ and $T_b^iT_c^{-1}T_b^{-1}T_a^{j-1}T_d^{-1}(T_bT_c)^6(T_b^iT_c^{-1}T_b^{-1}T_a^{j-1}T_d^{-1})^{-1}\in \s_3'$ We have
\begin{align*}
&T_b^iT_c^{-1}T_b^{-1}T_a^{j-1}T_d^{-1}(T_bT_c)^6(T_b^iT_c^{-1}T_b^{-1}T_a^{j-1}T_d^{-1})^{-1}\\
&=(T_b^{1-\ell}T_aT_b^{1-\bar{\ell}})T_b^{\bar{\ell}-1}(T_a^{-1}T_b^{-1}T_c^{-1}T_d^{-1})T_b^{-1}T_a^{j-1}(T_bT_c)^6((T_b^{1-\ell}T_aT_b^{1-\bar{\ell}})T_b^{\bar{\ell}-1}(T_a^{-1}T_b^{-1}T_c^{-1}T_d^{-1})T_b^{-1}T_a^{j-1})^{-1}\\
&=(T_b^{1-\ell}T_aT_b^{1-\bar{\ell}})T_b^{\bar{\ell}-1}(T_c^{-1}T_b^{j-1}T_c)(T_cT_d)^6((T_b^{1-\ell}T_aT_b^{1-\bar{\ell}})T_b^{\bar{\ell}-1}(T_c^{-1}T_b^{j-1}T_c))^{-1}\\
&=(T_b^{1-\ell}T_aT_b^{1-\bar{\ell}})T_b^{\bar{\ell}}T_c^{j-1}T_b^{-1}(T_cT_d)^6((T_b^{1-\ell}T_aT_b^{1-\bar{\ell}})T_b^{\bar{\ell}}T_c^{j-1}T_b^{-1})^{-1}\\
&=(T_b^{1-\ell}T_aT_b^{1-\bar{\ell}})T_b^{\bar{\ell}}T_c^{j}T_d(T_d^{-1}T_c^{-1}T_b^{-1}(T_cT_d)^6((T_b^{1-\ell}T_aT_b^{1-\bar{\ell}})T_b^{\bar{\ell}}T_c^{j}T_d(T_d^{-1}T_c^{-1}T_b^{-1})^{-1}\\
&=(T_b^{1-\ell}T_aT_b^{1-\bar{\ell}})T_b^{\bar{\ell}}T_c^jT_d(T_bT_c)^6((T_b^{1-\ell}T_aT_b^{1-\bar{\ell}})T_b^{\bar{\ell}}T_c^jT_d)^{-1}.
\end{align*}
When $i=0$, we have
\begin{align*}
&T_c^{-1}T_b^{-1}T_a^{j-1}T_d^{-1}(T_bT_c)^6(T_c^{-1}T_b^{-1}T_a^{j-1}T_d^{-1})^{-1}\\
&=T_c^{-1}T_d^{-1}(T_b^{-1}T_a^{j-1}T_b)(T_bT_c)^6(T_c^{-1}T_d^{-1}(T_b^{-1}T_a^{j-1}T_b))^{-1}\\
&=T_c^{-1}T_d^{-1}T_a(T_b^{j-1}T_a^{-1}T_b^{\bar{j}-1})(T_bT_c)^6(T_c^{-1}T_d^{-1}T_a(T_b^{j-1}T_a^{-1}T_b^{\bar{j}-1}))^{-1}.
\end{align*}
Hence $\s_3'$ can be replaced with $\s''=\{(T_bT_c)^6,T_b^iT_c^jT_d(T_bT_c)^6T_d^{-1}T_c^{-j}T_b^{-i}\mid 1\leq i,j<k\}$. Finally, we reduce the set $\s_1$ to the set $\{(T_bT_c)^6\}$. For $i+j=0$, we choose $T_b^iT_c^{-1}(T_aT_b)^6T_cT_b^{-i}\in \s_1$ and $T_b^{1-j}T_aT_b^{1-\bar{j}}\in \s_1$. We have
\begin{align*}
&T_b^iT_c^{-1}(T_aT_b)^6T_cT_b^{-i}\\
&=(T_b^{1-j}T_aT_b^{1-\bar{j}})(T_b^{1-j}T_aT_b^{1-\bar{j}})^{-1}T_b^iT_c^{-1}(T_aT_b)^6T_cT_b^{-i}(T_b^{1-j}T_aT_b^{1-\bar{j}})(T_b^{1-j}T_aT_b^{1-\bar{j}})^{-1}\\
&=(T_b^{1-j}T_aT_b^{1-\bar{j}})T_b^{\bar{j}-1}T_a^{-1}T_b^{-1}T_c^{-1}(T_aT_b)^6((T_b^{1-j}T_aT_b^{1-\bar{j}})T_b^{\bar{j}-1}T_a^{-1}T_b^{-1}T_c^{-1})^{-1}\\
&=(T_b^{1-j}T_aT_b^{1-\bar{j}})T_b^{\bar{j}-1}(T_bT_c)^6((T_b^{1-j}T_aT_b^{1-\bar{j}})T_b^{\bar{j}-1})^{-1}\\
&=(T_b^{1-j}T_aT_b^{1-\bar{j}})(T_bT_c)^6(T_b^{1-j}T_aT_b^{1-\bar{j}})^{-1}
\end{align*}
Hence, $\lmap_k(S_2)=\langle \s\cup \{T_a,T_b^k,T_c,T_d,T_e,\iota\}\cup \s'\cup \s'' \rangle$, where $\s'=\{T_b^{1-j}T_aT_b^{1-\bar{j}}\mid 1\leq j<k\}$ and $\s''=\{(T_bT_c)^6,T_b^iT_c^jT_d(T_bT_c)^6T_d^{-1}T_c^{-j}T_b^{-i}\mid 1\leq i,j<k\}$.
\end{proof}

As a corollary of Theorem~\ref{thm:genset_lmod}, we recover the finite generating set for $\lmap_2(S_2)$ obtained in~\cite[Corollary 5.5]{dhanwani21} and obtain a simpler finite generating set for $\lmap_3(S_2)$.

\begin{cor}
\label{cor:genset_lmod2}
We have $\lmap_2(S_2)=\langle T_a,T_b^2,T_c,T_d,T_e\rangle$.
\end{cor}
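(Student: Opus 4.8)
The plan is to specialize Theorem~\ref{thm:genset_lmod} to $k=2$ and then show the leftover generators already lie in $G:=\langle T_a,T_b^2,T_c,T_d,T_e\rangle$. When $k=2$ the only index with $1\le j<k$ is $j=1$, and $\bar{1}=1$ in $\Z_2$, so $\s'$ collapses to the single element $T_b^{0}T_aT_b^{0}=T_a$, which is already on the list, and $\s''=\{(T_bT_c)^6,\ T_bT_cT_d(T_bT_c)^6T_d^{-1}T_c^{-1}T_b^{-1}\}$. To pin down $\s$, I would recall the classical fact that $\Gamma_0(2)$ is generated by $\left(\begin{smallmatrix}1&1\\0&1\end{smallmatrix}\right)$ and $\left(\begin{smallmatrix}1&0\\2&1\end{smallmatrix}\right)$, and read off from the symplectic action on $H_1(S_2;\Z)=\langle [a],[b]\rangle\oplus\langle [d],[e]\rangle$ that $\Psi(T_a)$ and $\Psi(T_b)^2$ are, up to inverses, the $\phi$-images of these two matrices; hence one may take $\s=\{T_a,T_b^2\}\subset G$ (this also respects the requirement $T_b^{2}\in\s$). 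After these reductions, Theorem~\ref{thm:genset_lmod} reduces the corollary to showing $\iota\in G$ together with $(T_bT_c)^6\in G$ and $T_bT_cT_d(T_bT_c)^6T_d^{-1}T_c^{-1}T_b^{-1}\in G$.

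For the separating-curve twists I would use the $2$-chain relation. Writing $\delta:=\partial N(a\cup b)$, we have $(T_aT_b)^6=T_\delta$, and similarly $(T_dT_e)^6=T_{\partial N(d\cup e)}$; since $N(d\cup e)$ is a genus-one subsurface of the genus-one complement of $N(a\cup b)$ on which $[d],[e]$ form a homology basis, $\partial N(d\cup e)$ is isotopic to $\delta$, so $(T_aT_b)^6=(T_dT_e)^6\in G$. The chain rotation $\rho:=T_aT_bT_cT_d$ satisfies $\rho(a)=b$ and $\rho(b)=c$, hence carries $\delta$ to $\partial N(b\cup c)$ and gives $(T_bT_c)^6=\rho\,(T_dT_e)^6\,\rho^{-1}$; using that $(T_dT_e)^6$ commutes with $T_d$, and then the braid relations $T_bT_cT_b=T_cT_bT_c$ and $T_aT_bT_a=T_bT_aT_b$, I would rewrite this conjugate so that $T_b$ occurs only in the combination $T_b^2$, placing it in $G$, and treat $T_bT_cT_d(T_bT_c)^6T_d^{-1}T_c^{-1}T_b^{-1}$ the same way. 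For the hyperelliptic involution I would use the identity $\iota=(T_aT_b)^3(T_dT_e)^{-3}$ — verified via $\Psi(\iota)=-I$, $\iota^2=1$, torsion-freeness of $\I(S_2)$, and $(T_aT_b)^6=(T_dT_e)^6$ — and then rewrite $(T_aT_b)^3=T_a^2T_bT_a^2T_b$ via $T_aT_bT_a=T_bT_aT_b$ and continue, again invoking the braid and disjointness relations of the chain, until $T_b$ appears only through $T_b^2$.

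I expect the genuine obstacle to be precisely this last rewriting. Note that $\iota\notin\langle T_a,T_c,T_d,T_e\rangle$: under $\stab_{\map(S_2)}(a)\to\map(S_2\setminus a)\cong\map(S_{1,2})$ the class $\iota$ maps to a homeomorphism swapping the two marked points (since $\Psi(\iota)[a]=-[a]$ forces the orientation of $a$ to be reversed), so it is not a product of the twists $T_\alpha,T_\beta,T_\gamma$ there; thus the power $T_b^2$ is essential. Yet there is no homomorphism of $\map(S_2)$ recording the parity of the $T_b$-exponent — the braid relation already violates it — so one cannot argue abstractly that the odd powers of $T_b$ above cancel; they must be eliminated by an explicit sequence of braid and commutation relations along the chain $a,b,c,d,e$. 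Once $\iota$, $(T_bT_c)^6$, and the remaining element of $\s''$ are shown to lie in $G$, Theorem~\ref{thm:genset_lmod} yields $\lmap_2(S_2)=G=\langle T_a,T_b^2,T_c,T_d,T_e\rangle$, recovering~\cite[Corollary~5.5]{dhanwani21}.
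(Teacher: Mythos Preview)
Your overall strategy matches the paper's: specialize Theorem~\ref{thm:genset_lmod} to $k=2$, identify $\s=\{T_a,T_b^2\}$, $\s'=\{T_a\}$, $\s''=\{(T_bT_c)^6,\,T_bT_cT_d(T_bT_c)^6T_d^{-1}T_c^{-1}T_b^{-1}\}$, and then check $\iota$ and the two elements of $\s''$ lie in $G=\langle T_a,T_b^2,T_c,T_d,T_e\rangle$. The gap is that you stop short of the actual rewriting and call it the ``genuine obstacle,'' when in fact each step is a one-line braid manipulation.

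For the separating twists the paper does not go through $\rho=T_aT_bT_cT_d$ at all. It computes directly
\[
(T_bT_c)^6=\bigl(T_b(T_cT_bT_c)T_bT_c\bigr)^2=\bigl(T_b(T_bT_cT_b)T_bT_c\bigr)^2=(T_b^2T_c)^4\in G,
\]
and observes that $T_bT_cT_d$ sends $b\mapsto c$, $c\mapsto d$, so $T_bT_cT_d(T_bT_c)^6T_d^{-1}T_c^{-1}T_b^{-1}=(T_cT_d)^6\in G$. Your conjugation-by-$\rho$ route is more circuitous (and $\rho$ itself contains an odd $T_b$), though it could be made to work.

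For $\iota$ the paper uses the standard hyperelliptic word $\iota=T_eT_dT_cT_bT_a^2T_bT_cT_dT_e$ and the single relation $T_bT_a^2T_b^{-1}=T_a^{-1}T_b^2T_a$ to get $\iota=T_eT_dT_c(T_a^{-1}T_b^2T_a)T_b^2T_cT_dT_e\in G$. Your alternative $\iota=(T_aT_b)^3(T_dT_e)^{-3}$ also works and the missing step is just as short: from $(T_aT_b)^3=T_a^2T_bT_a^2T_b$ one more application of $T_bT_a^2T_b^{-1}=T_a^{-1}T_b^2T_a$ gives $(T_aT_b)^3=T_a^2(T_a^{-1}T_b^2T_a)T_b^2=T_aT_b^2T_aT_b^2\in G$, and likewise $(T_dT_e)^3\in G$. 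So the ``obstacle'' you anticipate is not there; completing these three computations finishes the proof.
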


\begin{proof}
We put $k=2$ in the generating set obtained in Theorem~\ref{thm:genset_lmod}. It can be computed that $\phi(\Gamma_0(2))=\langle \Psi(T_a), \Psi(T_b^2)\rangle$. Therefore, $\s''=\{(T_bT_c)^6,T_bT_cT_d(T_bT_c)^6T_d^{-1}T_c^{-1}T_b^{-1}\}$, $\s=\{T_a,T_b^2\}$, and $\s'=\{T_a\}$. We have
$$\iota=T_eT_dT_cT_bT_a^2T_bT_cT_dT_e=T_eT_d T_c(T_bT_a^2T_b^{-1})T_b^2T_cT_dT_e= T_eT_dT_c(T_a^{-1}T_b^2T_a)T_b^2T_cT_dT_e.$$
Furthermore,
$$(T_bT_c)^6=(T_b(T_cT_bT_c)T_bT_c)^2=  (T_b(T_bT_cT_b)T_bT_c)^2=(T_b^2T_c)^4$$
and $T_bT_cT_d(T_bT_c)^6T_d^{-1}T_c^{-1}T_b^{-1}=(T_cT_d)^6$. Hence, $\lmap_2(S_2)=\langle T_a,T_b^2,T_c,T_d,T_e\rangle$.
\end{proof}

\begin{cor}
\label{cor:genset_lmod3}
We have $\lmap_3(S_2)=\langle T_a,T_b^3,T_c,T_d,T_e,\iota\rangle$.
\end{cor}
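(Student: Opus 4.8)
The plan is to specialize Theorem~\ref{thm:genset_lmod} to $k=3$ and then reduce the finite generating set it yields to $\{T_a,T_b^3,T_c,T_d,T_e,\iota\}$ by manipulations in $\map(S_2)$, following the pattern of the proof of Corollary~\ref{cor:genset_lmod2} but with a few extra steps. Since $\bar 1=1$ and $\bar 2=2$ in $\Z_3$, the sets appearing in Theorem~\ref{thm:genset_lmod} become $\s'=\{T_a,\ T_b^{-1}T_aT_b^{-1}\}$ and $\s''=\{(T_bT_c)^6\}\cup\{T_b^iT_c^jT_d(T_bT_c)^6T_d^{-1}T_c^{-j}T_b^{-i}\mid i,j\in\{1,2\}\}$. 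The inclusion $\langle T_a,T_b^3,T_c,T_d,T_e,\iota\rangle\subseteq\lmap_3(S_2)$ is immediate from Proposition~\ref{prop:sym_mat_s2}, so the task is to place $\s$, $\s'$, and $\s''$ inside $N:=\langle T_a,T_b^3,T_c,T_d,T_e,\iota\rangle$.

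First I would pin down $\s$. One has $\Psi(\langle T_a,T_b^3\rangle)=\phi(\Gamma)$, where $\Gamma=\langle A,B\rangle$ with $A=\left(\begin{smallmatrix}1&1\\0&1\end{smallmatrix}\right)$ and $B=\left(\begin{smallmatrix}1&0\\3&1\end{smallmatrix}\right)$; recall that $\Gamma\cong\Z*\Z_3$ (the $\Z_3$-factor generated by the order-three matrix $AB^{-1}$) and that $\Gamma_0(3)=\{\pm I\}\times\Gamma$, so $\Gamma$ has index two in $\Gamma_0(3)$ and $-I\notin\Gamma$. A direct matrix computation shows $\Psi(T_b^{-1}T_aT_b^{-1})=\phi(h)$ with $h\in\Gamma_0(3)\setminus\Gamma$, whence $\langle\Psi(T_a),\Psi(T_b^3),\Psi(T_b^{-1}T_aT_b^{-1})\rangle=\phi(\Gamma_0(3))$; so we may take $\s=\{T_a,T_b^3,T_b^{-1}T_aT_b^{-1}\}$. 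Since $T_b^{-1}T_aT_b^{-1}$ already occurs in $\s'$, the generating set of Theorem~\ref{thm:genset_lmod} for $k=3$ is $\{T_a,T_b^3,T_c,T_d,T_e,\iota,\ T_b^{-1}T_aT_b^{-1}\}\cup\s''$.

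Next I would handle $\s'$. Starting from $\iota=T_eT_dT_c(T_a^{-1}T_b^2T_a)T_b^2T_cT_dT_e$ obtained in the proof of Corollary~\ref{cor:genset_lmod2}, and using $T_b^2=T_b^3T_b^{-1}=T_b^{-1}T_b^3$, the portion $T_a^{-1}T_b^2T_aT_b^2$ rewrites as $T_a^{-1}T_b^3(T_b^{-1}T_aT_b^{-1})T_b^3$, and solving for it yields
\[
T_b^{-1}T_aT_b^{-1}=T_b^{-3}T_a\,T_c^{-1}T_d^{-1}T_e^{-1}\,\iota\,T_e^{-1}T_d^{-1}T_c^{-1}\,T_b^{-3},
\]
which manifestly lies in $N$. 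For $\s''$ I would proceed as in Corollary~\ref{cor:genset_lmod2}: using $(T_bT_c)^6=(T_b^2T_c)^4$ and $T_bT_cT_d(T_bT_c)^6T_d^{-1}T_c^{-1}T_b^{-1}=(T_cT_d)^6$ (the $i=j=1$ member, which lies in $\langle T_c,T_d\rangle\subseteq N$), every member of $\s''$ — including $(T_bT_c)^6$ itself — can be written as $(T_cT_d)^6$ conjugated by a word in $T_b^{\pm1},T_c^{\pm1},T_d^{\pm1}$. One then rewrites the resulting separating-curve twists without using $T_b$, by means of the braid relations, the relation $T_b^3\in N$, the chain relation $(T_aT_bT_cT_dT_e)^6=1$, and a suitable lantern relation in $\map(S_2)$, thereby placing each member of $\s''$ in $N$. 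Combining the three reductions gives $\lmap_3(S_2)=N$.

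The step I expect to be the main obstacle is the last one. The members of $\s''$ are Dehn twists about genus-one separating curves that differ from $(T_cT_d)^6$ by conjugation by $T_b$, and since $T_b\notin\lmap_3(S_2)$ this conjugation cannot be absorbed into $N$ directly; exhibiting such a twist inside $N$ — rather than merely inside $\I(S_2)\subseteq\lmap_3(S_2)$ — is exactly where one must locate and apply the right lantern (or chain) relation in $\map(S_2)$, and it is the only genuinely computation-heavy part of the argument. The remaining ingredients — the $\Gamma_0(3)$ group theory, the choice of $\s$, and the rewriting of $T_b^{-1}T_aT_b^{-1}$ — are short once the identity for $\iota$ from Corollary~\ref{cor:genset_lmod2} is in hand.
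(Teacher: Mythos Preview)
Your plan up through $\s$ and $\s'$ is sound and parallels the paper: the paper takes $\s=\{T_a,T_b^3,(T_aT_b)^3\}$ instead of your $\{T_a,T_b^3,T_b^{-1}T_aT_b^{-1}\}$, then uses $(T_aT_b)^3=\iota(T_dT_e)^3$ and the factorization $(T_aT_b)^3=(T_b^{-1}T_aT_b^{-1})T_b^3T_aT_b^3$ to place both $(T_aT_b)^3$ and $T_b^{-1}T_aT_b^{-1}$ in $N$. Your rewriting of $T_b^{-1}T_aT_b^{-1}$ via the hyperelliptic word is an equally valid route.

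The genuine gap is your treatment of $\s''$. You correctly identify it as the crux and then defer to ``a suitable lantern relation'' together with the $5$-chain relation; neither is what actually works, and you do not carry out any computation. The paper uses no lantern relation at all. Its key move is the purely braid-theoretic identity
\[
(T_bT_c)^6=(T_b^2T_c)^4=(T_b^3T_c)^3,
\]
obtained by repeatedly shuttling a single $T_b$ through $T_c$ via $T_b^{-1}T_cT_b=T_cT_bT_c^{-1}$; this immediately puts $(T_bT_c)^6\in\langle T_b^3,T_c\rangle\subset N$. The remaining four members of $\s''$ are then handled by further braid manipulations (sliding $T_b$'s past $T_c$'s and $T_d$'s and regrouping into blocks of $T_b^3$), ultimately expressing each as a word in $T_b^3,T_c,T_d$; for instance $T_b^2T_cT_d(T_bT_c)^6T_d^{-1}T_c^{-1}T_b^{-2}=T_c^{-1}T_d^{-1}(T_b^3T_c)^3T_dT_c$. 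Your proposed mechanism---conjugating $(T_cT_d)^6$ by words in $T_b,T_c,T_d$ and then invoking a lantern relation---does not obviously produce such expressions, and you give no indication of which four-holed sphere or which curves would be involved. Until you either find that lantern argument explicitly or discover the identity $(T_bT_c)^6=(T_b^3T_c)^3$, the reduction of $\s''$ is not done.
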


\begin{proof}
We put $k=3$ in the generating set obtained in Theorem~\ref{thm:genset_lmod}. It can be computed that $\phi(\Gamma_0(3))=\langle \Psi(T_a), \Psi(T_b^3),\Psi((T_aT_b)^3)\rangle$. We have $\s=\{ T_a,T_b^3,(T_aT_b)^3\}$, $\s'=\{T_b^{-1}T_aT_b^{-1},T_a\}$, and $\s''=\{(T_bT_c)^6,T_b^iT_c^jT_d(T_bT_c)^6T_d^{-1}T_c^{-j}T_b^{-i}\mid 1\leq i,j<3\}$. Since $(T_aT_b)^3(T_dT_e)^{-3}=\iota$, we have $(T_aT_b)^3=\iota (T_dT_e)^3$. Since
\begin{align*}
(T_aT_b)^3
&=T_aT_bT_aT_bT_aT_b\\
&=T_aT_b^2T_aT_b^2\\
& =T_aT_b(T_bT_aT_b^{-1})T_b^3\\
&=T_aT_bT_a^{-1}T_bT_aT_b^3\\
&=(T_b^{-1}T_aT_b^{-1})T_b^3T_aT_b^3,
\end{align*}
we have $$T_b^{-1}T_aT_b^{-1}=(T_aT_b)^3T_b^{-3}T_a^{-1}T_b^{-3}=i(T_dT_e)^3T_b^{-3}T_a^{-1}T_b^{-3}.$$
Now, we show that $(T_bT_c)^6=(T_b^3T_c)^3$. Since $(T_bT_c)^6=(T_b^2T_c)^4$, further simplifying, we get
\begin{align*}
(T_bT_c)^6
&=T_b^2T_cT_b^2T_cT_b^2T_cT_b^2T_c\\
&=T_b^3(T_b^{-1}T_cT_b)T_bT_cT_b^2T_cT_b^2T_c\\
&=T_b^3T_cT_b(T_c^{-1}T_bT_c)T_b^2T_cT_b^2T_c\\ &=T_b^3T_cT_b^2(T_cT_bT_c)T_b^2T_c\\
&=T_b^3T_cT_b^3T_cT_b^3T_c\\
&=(T_b^3T_c)^3.
\end{align*}
Now, we consider the elements of $\s''$. We have
\begin{align*}
\s''=\{&T_bT_cT_d(T_bT_c)^6T_d^{-1}T_c^{-1}T_b^{-1},T_b^2T_cT_d(T_bT_c)^6T_d^{-1}T_c^{-1}T_b^{-2},\\
&T_b^2T_c^2T_d(T_bT_c)^6T_d^{-1}T_c^{-2}T_b^{-2},T_bT_c^2T_d(T_bT_c)^6T_d^{-1}T_c^{-2}T_b^{-1}\}.
\end{align*}
We have
\begin{align*}
&T_b^2T_c^2T_d(T_bT_c)^6T_d^{-1}T_c^{-2}T_b^{-2}\\
&=T_b^3(T_b^{-1}T_c^2T_b)T_d(T_bT_c)^6(T_b^3(T_b^{-1}T_c^2T_b)T_d)^{-1}\\
&=T_b^3T_cT_b^2T_c^{-1}T_d(T_bT_c)^6(T_b^3T_cT_b^2T_c^{-1}T_d)^{-1}\\
&=(T_b^3T_cT_b^3)T_b^{-1}(T_c^{-1}T_dT_c)(T_bT_c)^6((T_b^3T_cT_b^3)T_b^{-1}(T_c^{-1}T_dT_c))^{-1}\\
&=T_b^3T_cT_b^3T_dT_c(T_c^{-1}T_b^{-1}T_c)T_d^{-1}(T_bT_c)^6(T_b^3T_cT_b^3T_dT_c(T_c^{-1}T_b^{-1}T_c)T_d^{-1})^{-1}\\
&=T_b^3T_cT_b^3T_dT_cT_bT_c^{-1}T_d^{-1}(T_bT_c)^6(T_b^3T_cT_b^3T_dT_cT_bT_c^{-1}T_d^{-1})^{-1}\\
&=T_b^3T_cT_b^3T_dT_cT_b^3(T_b^{-2}T_c^{-1}T_d^{-1})(T_bT_c)^6(T_b^3T_cT_b^3T_dT_cT_b^3(T_b^{-2}T_c^{-1}T_d^{-1}))^{-1}\\
&=(T_b^3T_cT_b^3T_dT_cT_b^3)T_b^{-1}(T_cT_d)^6((T_b^3T_cT_b^3T_dT_cT_b^3)T_b^{-1})^{-1}\\
&=T_b^3T_cT_b^3T_dT_cT_b^3T_cT_d(T_d^{-1}T_c^{-1}T_b^{-1})(T_cT_d)^6(T_b^3T_cT_b^3T_dT_cT_b^3T_cT_d(T_d^{-1}T_c^{-1}T_b^{-1}))^{-1}\\
&=T_b^3T_cT_b^3T_dT_cT_b^3T_cT_d(T_bT_c)^6(T_b^3T_cT_b^3T_dT_cT_b^3T_cT_d)^{-1}\\
&=T_b^3T_cT_b^3T_dT_cT_b^3T_cT_d(T_b^3T_c)^3(T_b^3T_cT_b^3T_dT_cT_b^3T_cT_d)^{-1}.
\end{align*}
We also have $T_bT_cT_d(T_bT_c)^6T_d^{-1}T_c^{-1}T_b^{-1}=(T_cT_d)^6$. Furthermore,
\begin{align*}
T_b^2T_cT_d(T_bT_c)^6T_d^{-1}T_c^{-1}T_b^{-2}
&=T_b(T_cT_d)^6T_b^{-1}\\
&=T_c^{-1}T_d^{-1}(T_dT_cT_b(T_cT_d)^6T_b^{-1}T_c^{-1}T_d^{-1})T_dT_c\\
&=T_c^{-1}T_d^{-1}(T_bT_c)^6T_dT_c\\
&=T_c^{-1}T_d^{-1}(T_b^3T_c)^3T_dT_c.
\end{align*}
Finally, we have
\begin{align*}
T_bT_c^2T_d(T_bT_c)^6T_d^{-1}T_c^{-2}T_b^{-1}
&=(T_bT_c^2T_b^{-1})T_d(T_bT_c)^6((T_bT_c^2T_b^{-1})T_d)^{-1}\\
&=T_c^{-1}(T_b^2T_cT_d(T_bT_c)^6T_d^{-1}T_c^{-1}T_b^{-2})T_c\\
&=T_c^{-1}(T_c^{-1}T_d^{-1}(T_b^3T_c)^3T_dT_c)T_c\\
&=T_c^{-2}T_d^{-1}(T_b^3T_c)^3T_dT_c^2
\end{align*}
Hence, $\lmap_3(S_2)=\langle T_a,T_b^3,T_c,T_d,T_e,\iota\rangle$.
\end{proof}

Let $F_{g,k}\in \map(S_{g_k})$ be a periodic mapping class of order $k$ inducing the cover $p_k$. For $g\geq 3$, an explicit finite generating set for $N_{\map(S_{g_2})}(F_{g,2})$ was obtained in~\cite[Corollary 3.10]{dhanwani21} by lifting the finite generating set of $\lmap_2(S_g)$ under $\varphi:N_{\map(S_{g_2})}(F_{g,2})\to \lmap_2(S_g)$. By lifting the generating set described in Corollaries~\ref{cor:genset_lmod2}-\ref{cor:genset_lmod3}, we get the following result.

\begin{figure}[ht]
\centering
\includegraphics[scale=0.4]{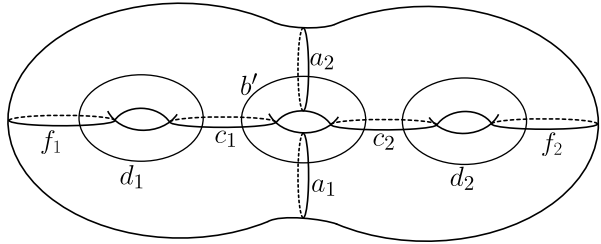}
\caption{The simple closed curves on $S_3$ used to describe a generating set for $N_{\map(S_3)}(F_{2,2})$.}
\label{fig:s3_curves}
\end{figure} 

\begin{figure}[ht]
\centering
\includegraphics[scale=1.2]{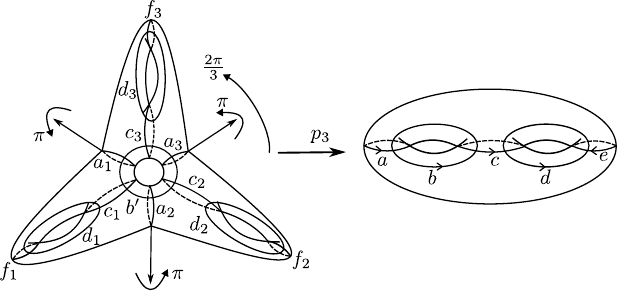}
\caption{A $3$-sheeted cover $S_4\to S_2$ induced by a free $2\pi/3$-rotation of $S_4$.}
\label{fig:cover_p3}
\end{figure}

\begin{cor}
\label{cor:genset_norm23}
We have
$$N_{\map(S_3)}(F_{2,2})=\langle F_{2,2}, T_{a_1}T_{a_2}, T_{b'}, T_{c_1}T_{c_2}, T_{d_1}T_{d_2},T_{f_1}T_{f_2}\rangle$$
and
$$N_{\map(S_4)}(F_{2,3})=\langle F_{2,3}, T_{a_1}T_{a_2}T_{a_3}, T_{b'}, T_{c_1}T_{c_2}T_{c_3}, T_{d_1}T_{d_2}T_{d_3},T_{f_1}T_{f_2}T_{f_3},\iota_1\iota_2\iota_3\rangle$$
for simple closed curves shown in Figure~\ref{fig:cover_p3}-\ref{fig:s3_curves} and $\iota_1,\iota_2,\iota_3\in \map(S_4)$ are involutions represented by $\pi$-rotation of $S_4$ as shown in Figure~\ref{fig:cover_p3}. 
\end{cor}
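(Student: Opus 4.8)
The plan is to combine the Birman-Hilden short-exact sequence~\eqref{eqn:bh_short_exact} for the cover $p_k:S_{k+1}\to S_2$ with the generating sets for $\lmap_k(S_2)$ obtained in Corollaries~\ref{cor:genset_lmod2} and~\ref{cor:genset_lmod3}. Since $p_k$ is a finite-sheeted regular unbranched cover of a hyperbolic surface, it has the Birman-Hilden property, and $\smap_{p_k}(S_{k+1})=N_{\map(S_{k+1})}(F_{2,k})$, so~\eqref{eqn:bh_short_exact} reads $1\to\langle F_{2,k}\rangle\to N_{\map(S_{k+1})}(F_{2,k})\xrightarrow{\ \varphi\ }\lmap_k(S_2)\to 1$. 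As $\ker\varphi=\langle F_{2,k}\rangle$, the group $N_{\map(S_{k+1})}(F_{2,k})$ is generated by $F_{2,k}$ together with any choice of $\varphi$-preimages of a generating set of $\lmap_k(S_2)$. Thus it suffices to exhibit $\varphi$-preimages of $T_a,T_b^k,T_c,T_d,T_e$ and, only when $k=3$, of $\iota$ (the class $\iota$ is absent from the generating set in Corollary~\ref{cor:genset_lmod2}, which accounts for the smaller generating set in the $k=2$ case).

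First I would record how a liftable Dehn twist lifts. By the symplectic criterion of Proposition~\ref{prop:sym_mat_s2} and since $k$ is prime, if $T_\gamma\in\lmap_k(S_2)$ for a nonseparating simple closed curve $\gamma$, then the $p_k$-monodromy around $\gamma$ must vanish (the obstructing entry of $\Psi(T_\gamma)$ is, up to sign, the square of that monodromy integer), so $p_k^{-1}(\gamma)$ is a disjoint union of exactly $k$ curves $\gamma_1,\dots,\gamma_k$, each mapped homeomorphically onto $\gamma$. In this case the multitwist $T_{\gamma_1}\cdots T_{\gamma_k}$ is a lift of $T_\gamma$, it commutes with $F_{2,k}$ (which cyclically permutes the $\gamma_i$), hence it is fiber-preserving and $\varphi(T_{\gamma_1}\cdots T_{\gamma_k})=T_\gamma$. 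Applying this to $\gamma\in\{a,c,d,e\}$ produces the lifts $T_{a_1}\cdots T_{a_k}$, $T_{c_1}\cdots T_{c_k}$, $T_{d_1}\cdots T_{d_k}$, $T_{f_1}\cdots T_{f_k}$, where the curves $a_i,c_i,d_i,f_i$ depicted in Figures~\ref{fig:cover_p3}--\ref{fig:s3_curves} are the components of $p_k^{-1}(a),p_k^{-1}(c),p_k^{-1}(d),p_k^{-1}(e)$, respectively. The curve $b$ is the exceptional case: by Proposition~\ref{prop:sym_mat_s2} its class has nontrivial $p_k$-monodromy (so $T_b\notin\lmap_k(S_2)$ while $T_b^k\in\lmap_k(S_2)$), hence $p_k^{-1}(b)=b'$ is connected and is a $k$-fold cyclic cover of $b$; passing to the local model of a $k$-fold cover of an annular neighbourhood shows that the single twist $T_{b'}$ descends to $T_b^k$, so $\varphi(T_{b'})=T_b^k$, and $T_{b'}$ is fiber-preserving because $F_{2,k}$ preserves $b'$.

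For $k=3$ it remains to lift $\iota$. Since $\iota\in\lmap_3(S_2)$ by Corollary~\ref{cor:genset_lmod3}, it admits lifts to $S_4$, all differing by powers of $F_{2,3}$; I would take the lift to be $\iota_1\iota_2\iota_3$, the product of the three $\pi$-rotations of $S_4$ visible in Figure~\ref{fig:cover_p3}, and verify directly from the $\Z_3$-symmetric model of $S_4\to S_2$ that this composite is fiber-preserving and induces $\iota$ on $S_2$, i.e.\ $\varphi(\iota_1\iota_2\iota_3)=\iota$. Feeding the resulting preimages, together with $F_{2,k}$, into Corollaries~\ref{cor:genset_lmod2} and~\ref{cor:genset_lmod3} yields the two asserted generating sets. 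The step I expect to be the main obstacle is pinning down the lift of $\iota$: being a periodic class, its lift is only canonical up to composition with powers of $F_{2,3}$, and identifying it with the concrete product $\iota_1\iota_2\iota_3$ of $\pi$-rotations requires a careful inspection of the symmetric picture of the cover; a secondary, more routine point is checking that the curves labelled $a_i,b',c_i,d_i,f_i$ in the figures are precisely the connected components of the relevant preimages (in particular that the lifts of $e$ are the curves named $f_i$).
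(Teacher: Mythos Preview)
Your proposal is correct and follows exactly the approach the paper indicates: the paper gives no detailed proof of this corollary, merely stating that it is obtained ``by lifting the generating set described in Corollaries~\ref{cor:genset_lmod2}--\ref{cor:genset_lmod3}'' through the Birman--Hilden sequence~\eqref{eqn:bh_short_exact}, and your write-up fills in precisely those standard details (multitwist lifts for curves of trivial monodromy, $T_{b'}$ for the connected preimage of $b$, and the geometric lift of $\iota$).
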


\subsection{A finite generating set for $\lmap_p(S_2)$ for a $\Z_2\oplus \Z_2$ cover $S_5\to S_2$}
For $\Z_2\oplus \Z_2=\langle s\mid s^2=1\rangle \oplus \langle t\mid t^2=1\rangle$, consider the map $H_1(S_2;\Z)\to \Z_2\oplus \Z_2$ given by $(e_1,e_2,e_3,e_4)\mapsto (s,t,0,0)$. This map corresponds to a $\Z_2\oplus \Z_2$-cover $p:S_5\to S_2$. In this subsection, we compute a finite generating set for $\lmap_p(S_2)$ using Algorithm~\ref{algo}.

For Step 1 of Algorithm~\ref{algo}, by Theorem~\ref{thm:sym_criteria}, we get the following symplectic criteria for $\lmap_p(S_2)$.
\begin{cor}
\label{cor:klein_symp}
For $\Psi(F)=(a_{ij})_{4\times 4}$, $F\in \lmap_p(S_2)$ if and only if $a_{13},a_{14},a_{23},a_{24},a_{31},a_{32}$,  $a_{41},a_{42}$ are even.
\end{cor}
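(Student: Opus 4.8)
The plan is to specialize Theorem~\ref{thm:sym_criteria} to the cover $p:S_5\to S_2$, whose deck group $\Z_2\oplus\Z_2$ is the elementary abelian group $\Z_k^r$ with $k=2$ and $r=2$, over the base $S_h$ with $h=2$. The first step is to identify $p$ with a normal form from Theorem~\ref{thm:conj_elem_covers}. Under the identification $a_1=e_1$, $b_1=e_2$, $a_2=e_3$, $b_2=e_4$, the induced map $\eta':H_1(S_2;\Z)\to\Z_2\oplus\Z_2$ is $e_1\mapsto s$, $e_2\mapsto t$, $e_3,e_4\mapsto 0$, which is exactly $\eta_K$ with $K=1$. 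Hence $\ker\eta'=\langle 2e_1,2e_2,e_3,e_4\rangle$, a sublattice of $\Z^4$ of index $4$.

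Next I would unwind the liftability criterion $F\in\lmap_p(S_2)\iff\Psi(F)(\ker\eta')=\ker\eta'$. Since $\Psi(F)\in\symp(4,\Z)$ is invertible over $\Z$, the image $\Psi(F)(\ker\eta')$ is again a sublattice of index $4$, so this equality is equivalent to the inclusion $\Psi(F)(\ker\eta')\subseteq\ker\eta'$, which in turn holds iff each of the four generators $2e_1,2e_2,e_3,e_4$ is sent into $\ker\eta'$. The images of $2e_1$ and $2e_2$ lie in $\ker\eta'$ for every $F$, since their first two coordinates are automatically even, so the condition reduces to $\Psi(F)(e_3)\in\ker\eta'$ and $\Psi(F)(e_4)\in\ker\eta'$, i.e.\ to $a_{13},a_{23},a_{14},a_{24}$ being even.

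Finally I would observe that the remaining four congruences $a_{31},a_{32},a_{41},a_{42}\equiv0\pmod2$ in the statement are forced by the four above together with the symplectic relations, so the eight-fold condition is a symmetric restatement of the necessary-and-sufficient one. This can be seen either by applying the same reasoning to $\Psi(F)^{-1}$, which also preserves $\ker\eta'$ by the equality just established and whose relevant entries are, up to sign, $a_{31},a_{32},a_{41},a_{42}$ because $\Psi(F)^{-1}$ is obtained from $\Psi(F)^{T}$ by conjugating with the symplectic form; or by a direct block computation: writing $\Psi(F)$ in $2\times2$ blocks with upper-right block $Q$, lower-left block $R$, and lower-right block $S$, the symplectic identities give $\det Q+\det S=1$ and a vanishing mixed-block relation, so $Q\equiv0\pmod2$ makes $S$ invertible modulo $2$ and hence forces $R\equiv0\pmod2$, and symmetrically. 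I expect no serious obstacle here; the only point requiring genuine care is the bookkeeping in the first step --- correctly matching the prescribed $\Z_2\oplus\Z_2$-action against the normal form $\eta_K$ to pin down $K=1$ and read off $\ker\eta'$ --- after which everything reduces to a routine lattice computation.
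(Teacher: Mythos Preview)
Your proof is correct and follows the same route as the paper, which simply records the corollary as an instance of Theorem~\ref{thm:sym_criteria} without further argument. You correctly identify the normal form as $K=1$, compute $\ker\eta'=\langle 2e_1,2e_2,e_3,e_4\rangle$, and reduce liftability to the four congruences $a_{13},a_{14},a_{23},a_{24}\equiv 0\pmod 2$; your additional observation that the symplectic relations then force $a_{31},a_{32},a_{41},a_{42}\equiv 0\pmod 2$ (so that the eight-fold statement is just a symmetric rewriting) fills in a step the paper leaves implicit.
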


For Step 2 of Algorithm~\ref{algo}, we compute the structure of the finite graph $\overline{\N(S_2)}\cong\overline{\N_2(S_2)}$.

\begin{lem}
\label{lem:quotient_graph_klein}
The graph $\overline{\N_2(S_2)}$ has three vertices corresponding to orbits $\overline{e_1}= O_{e_1}$, $\overline{e_3} =O_{e_3}$, and $\overline{e_1-e_3}= O_{e_1-e_3}$. Furthermore, there is one loop at each of $\overline{e_1}$ and $\overline{e_3}$, two loop at $\overline{e_1-e_3}$, one edge between $\overline{e_1}$ and $\overline{e_1-e_3}$, one edge between $\overline{e_3}$ and $\overline{e_1-e_3}$, and no other edges.   
\end{lem}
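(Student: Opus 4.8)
The plan is to follow the strategy of Lemmas~\ref{lem:vertices_modk_graph}--\ref{lem:edhes_modk_graph}, but to take advantage of the fact that for $k=2$ the group $\Psi_2(\lmap_p(S_2))$ has a transparent product structure. First I would combine the symplectic criterion of Corollary~\ref{cor:klein_symp} with the surjectivity of $\Psi_2\colon\map(S_2)\to\symp(4,\Z_2)$ to identify $\Psi_2(\lmap_p(S_2))$ with the group of block-diagonal matrices $\mathrm{diag}(A,B)$, $A,B\in\Sl(2,\Z_2)$, relative to the splitting $\Z_2^4=\langle e_1,e_2\rangle\oplus\langle e_3,e_4\rangle$: reducing the eight congruences of Corollary~\ref{cor:klein_symp} modulo $2$ says exactly that the two off-diagonal $2\times2$ blocks vanish, and conversely every such block-diagonal matrix is symplectic over $\Z_2$, hence lies in $\Psi_2(\map(S_2))$. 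By Lemma~\ref{lem:graph_corres} it then suffices to analyze the action of $\Sl(2,\Z_2)\times\Sl(2,\Z_2)$ on $\N_2(S_2)$, whose vertices are the $15$ nonzero vectors of $\Z_2^4$ (over $\Z_2$ every nonzero vector is primitive).

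For the vertices, I would write each nonzero $v\in\Z_2^4$ as $v=v'+v''$ with $v'\in\langle e_1,e_2\rangle$, $v''\in\langle e_3,e_4\rangle$, and use that $\Sl(2,\Z_2)\cong S_3$ acts as the full symmetric group on the three nonzero vectors of $\Z_2^2$, so that the two factors can independently normalize $v'$ and $v''$. Hence the orbit of $v$ depends only on which of $v'=0$, $v''=0$, or $v',v''\neq 0$ holds, giving the three orbits $O_{e_1}$ (of size $3$), $O_{e_3}$ (of size $3$), and $O_{e_1-e_3}=O_{e_1+e_3}$ (of size $9$); the sizes add to $15$, so there are no others. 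This yields the three vertices $\overline{e_1},\overline{e_3},\overline{e_1-e_3}$.

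For the edges, the elementary input is that $\hat{i}_2(v,w)=\hat{i}_2(v',w')+\hat{i}_2(v'',w'')$ and that two nonzero vectors of one copy of $\Z_2^2$ pair to $1$ precisely when they are distinct (and to $0$ when equal). I would run through the six possibilities for the orbit-types of the endpoints of an edge $\{v,w\}$. If both endpoints lie in $O_{e_1}$ (resp.\ both in $O_{e_3}$), then $\hat{i}_2=1$ just says the two vectors are distinct, and the first factor (resp.\ second factor), acting as $S_3$, is transitive on $2$-element subsets of the three relevant vectors, giving a single loop. If one endpoint is in $O_{e_1}$ and the other in $O_{e_3}$, then $\hat{i}_2(v,w)=0$ identically, so there is no such edge. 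If one endpoint is in $O_{e_1}$ and the other in $O_{e_1-e_3}$ (resp.\ $O_{e_3}$ and $O_{e_1-e_3}$), then after normalizing the first vector to $e_1$ (resp.\ $e_3$) its stabilizer still acts transitively on the finitely many admissible partners, giving exactly one edge in each case. The one delicate case, which I expect to be the main obstacle, is when both endpoints lie in $O_{e_1-e_3}$: then all of $v',v'',w',w''$ are nonzero and $\hat{i}_2(v,w)=1$ forces \emph{exactly one} of $v'=w'$, $v''=w''$ to hold, and these two situations lie in different orbits because $\Sl(2,\Z_2)\times\Sl(2,\Z_2)$ acts on the two summands independently and cannot interchange them. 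I would make this precise by normalizing $v=e_1+e_3$, computing its stabilizer $\bigl\langle\bigl(\begin{smallmatrix}1&1\\0&1\end{smallmatrix}\bigr)\bigr\rangle\times\bigl\langle\bigl(\begin{smallmatrix}1&1\\0&1\end{smallmatrix}\bigr)\bigr\rangle\cong\Z_2\times\Z_2$, and checking it has exactly two orbits on the four admissible partners $w$, each projecting to a genuine (and distinct) loop at $\overline{e_1-e_3}$; counting ordered pairs ($9\cdot4=36$, i.e.\ $18$ edges splitting as $9+9$) gives an independent consistency check. Assembling these counts produces the stated graph.
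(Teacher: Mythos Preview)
Your proposal is correct and complete; the only step to make fully explicit is the passage from ``two $\mathrm{Stab}(e_1+e_3)$-orbits on partners'' to ``two edge orbits'', but your remark that the group cannot interchange the two summands (together with the $9+9=18$ count) handles this.

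Your route differs from the paper's in a useful way. The paper proves the lemma by brute enumeration: it lists the three vertex orbits element by element (sizes $3,3,9$) and then, for each pair of vertex orbits, writes out the relevant edge orbits explicitly as sets of unordered pairs. You instead first identify $\Psi_2(\lmap_p(S_2))$ with $\Sl(2,\Z_2)\times\Sl(2,\Z_2)$ acting diagonally on $\Z_2^2\oplus\Z_2^2$, and then read off all orbit counts from the product structure and the fact that $\Sl(2,\Z_2)\cong S_3$ permutes the three nonzero vectors of $\Z_2^2$. This is cleaner and more conceptual: it explains \emph{why} there are exactly two loops at $\overline{e_1-e_3}$ (the invariant is which summand carries the coincidence $v'=w'$ versus $v''=w''$), whereas the paper simply exhibits two edge orbits and leaves the reader to check that no further orbits exist. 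The paper's approach has the virtue of being entirely elementary and of producing explicit representatives immediately; your approach requires the preliminary identification of the image group but then makes the subsequent case analysis shorter and error-resistant, and it would scale better to analogous computations for other small covers.
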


\begin{proof}
By Corollary~\ref{cor:klein_symp}, it follows that $O_{e_1}=\{e_1,e_2,e_1+e_2\}$, $O_{e_3}=\{e_3,e_4,e_3+e_4\}$, and $O_{e_1-e_3}=\{e_1+e_3,e_1+e_2+e_3,e_1+e_2+e_4,e_1+e_2+e_3+e_4, e_2+e_3+e_4,e_2+e_3,e_2+e_4,e_1+e_4, e_1+e_3+e_4 \}$. Hence, $\overline{\N_2(S_2)}$ has three vertices.

Furthermore, there is one loop at $\overline{e_1}$ and $\overline{e_3}$ corresponding to orbit $O_{\{e_1,e_2\}}$ and $O_{\{e_3,e_4\}}$, respectively. It can be shown that there are two loops at $\overline{e_1-e_3}$ corresponding to the orbits $O_{\{e_1+e_3,e_1+e_2+e_3\}}=\{\{e_1+e_3,e_1+e_2+e_3\},\{e_1+e_3,e_2+e_3\}\}$ and $O_{\{e_1+e_3,e_1+e_3+e_4\}}=\{\{e_1+e_3,e_1+e_3+e_4\},\{e_1+e_3,e_1+e_4\}\}$.

Since $\hat{i}_2(e_1,e_3)=0$, there are no edges between $\overline{e_1}$ and $\overline{e_3}$. It can be shown that $O_{\{e_2,e_1+e_3\}}=\{\{e_1,e_1+e_3\},\{e_2,e_1+e_3\},\{e_1+e_2,e_2+e_3\},\{e_1,e_1+e_2+e_3\},\{e_1,e_1+e_2+e_4\},\{e_1,e_1+e_2+e_3+e_4\},\{e_1,e_2+e_3+e_4\},\{e_1,e_2+e_3\},\{e_1,e_2+e_4\},\{e_2,e_1+e_2+e_3\},\{e_2,e_1+e_2+e_4\},\{e_2,e_1+e_2+e_3+e_4\},\{e_2,e_1+e_4\},\{e_2,e_1+e_3+e_4\},\{e_1+e_2,e_1+e_3\},\{e_1+e_2,e_2+e_3+e_4\},\{e_1+e_2,e_2+e_4\},\{e_1+e_2,e_1+e_4\},\{e_1+e_2,e_1+e_3+e_4\}\}$. Thus, there is exactly one edge between vertices $\overline{e_1}$ and $\overline{e_1-e_3}$. Similarly, it can be shown that there is exactly one edge between $\overline{e_1-e_3}$ and $\overline{e_3}$.
\end{proof}

For Step 3 of Algorithm~\ref{algo}, we choose representative curves $b$, $c$, and $d$ for vertices $\overline{e_1}$, $\overline{e_1+e_3}$, and $\overline{e_3}$ of $\overline{\N_2(S_2)}$, respectively. Since the complement of loops in $\overline{\N_2(S_2)}$ is the maximal tree, for loops of $\overline{\N_2(S_2)}$, we choose edges $\{a,b\}$, $\{e,d\}$, $\{c,T_b(c)\}$, and $\{c,T_d(c)\}$ corresponding to edge orbits $O_{\{e_1,e_2\}}$, $O_{\{e_3,e_4\}}$, $O_{\{e_1+e_3,e_1+e_2+e_3\}}$, and $O_{\{e_1+e_3,e_1+e_3+e_4\}}$, respectively. Since $T_aT_b(a)=b$, $T_eT_d(e)=d$, and $T_aT_b,T_eT_d\in \lmap_p(S_2)$, we have $\stab_{\lmap_p(S_2)}(b)=T_aT_b\stab_{\lmap_p(S_2)}(a)T_b^{-1}T_a^{-1}$ and $\stab_{\lmap_p(S_2)}(d)=T_eT_d\stab_{\lmap_p(S_2)}(e)T_d^{-1}T_e^{-1}$. For easier computations, we will derive generating sets for $\stab_{\lmap_p(S_2)}(a)$ and $\stab_{\lmap_p(S_2)}(e)$ instead of $\stab_{\lmap_p(S_2)}(b)$ and $\stab_{\lmap_p(S_2)}(d)$, respectively.

For Step 4 of Algorithm~\ref{algo}, we compute $\stab_{\I(S_2)}(\gamma)$ for $\gamma=a,c,e$. By Lemma~\ref{lem:stab_e_torelli}, we recall that
\[
\stab_{\I(S_2)}(e)=\langle
T_a^{-m-1}T_c^mT_b^{-1}T_c^n(T_aT_b)^6T_c^{-n}T_bT_c^{-m}T_a^{m+1}\mid m,n\in \Z\rangle.
\]

\begin{lem}
\label{lem:stab_torelli_klien_a}
We have
\[
\stab_{\I(S_2)}(a)=\langle T_e^{-m-1}T_c^mT_d^{-1}T_c^n(T_eT_d)^6T_c^{-n}T_dT_c^{-m}T_e^{m+1}\mid m,n\in \Z  \rangle.
\]
\end{lem}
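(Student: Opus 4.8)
The plan is to reduce the computation of $\stab_{\I(S_2)}(a)$ to the already-established Lemma~\ref{lem:stab_e_torelli} by exhibiting an explicit mapping class that carries the curve $e$ to the curve $a$ and conjugating the generating set accordingly. Concretely, I would first locate a homeomorphism $G \in \map(S_2)$ with $G(e) = a$; from the configuration of curves in Figures~\ref{fig:curves_s2} and \ref{fig:s2e_embed_s2}, the curves $a$ and $e$ play symmetric roles with respect to the chain $a,b,c,d,e$, so a product of Dehn twists along this chain (analogous to the element $F = T_cT_bT_dT_cT_eT_d$ used in Lemma~\ref{lem:stab_b_torelli}) should do the job. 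Once such a $G$ is fixed, we have $\stab_{\I(S_2)}(a) = G\,\stab_{\I(S_2)}(e)\,G^{-1}$, since conjugation by $G$ carries the Torelli group to itself (the Torelli group is normal in $\map(S_2)$) and carries the stabilizer of $e$ onto the stabilizer of $a$.

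Next I would substitute the generating set of $\stab_{\I(S_2)}(e)$ from Lemma~\ref{lem:stab_e_torelli} into this conjugation formula. Writing each generator as $T_a^{-m-1}T_c^mT_b^{-1}T_c^n(T_aT_b)^6T_c^{-n}T_bT_c^{-m}T_a^{m+1}$ and conjugating by $G$, every Dehn twist $T_x$ becomes $T_{G(x)}$ by the change-of-coordinates principle (part (ii) of the twist relations lemma), so the task is purely to compute the images $G(a)$, $G(b)$, $G(c)$ under $G$. Since we want the answer to come out symmetric — $a \leftrightarrow e$, $b \leftrightarrow d$, with $c$ fixed — the natural expectation is $G(a) = e$, $G(b) = d$, and $G(c) = c$ (at least up to adjusting $G$ by something in the stabilizer of $e$), which immediately yields
\[
\stab_{\I(S_2)}(a)=\langle T_e^{-m-1}T_c^mT_d^{-1}T_c^n(T_eT_d)^6T_c^{-n}T_dT_c^{-m}T_e^{m+1}\mid m,n\in \Z  \rangle,
\]
as claimed. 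This is essentially the same bookkeeping carried out in the proof of Lemma~\ref{lem:stab_b_torelli}, but symmetrically.

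The main obstacle I anticipate is pinning down the correct element $G$ and verifying its action on the three curves $a,b,c$ precisely. One must check that $G$ fixes $c$ (not merely sends it to some other curve disjoint from $e$), that $G(b) = d$, and that $G$ indeed maps $e$ to $a$; a slightly wrong choice of $G$ will send $c$ to a curve like $f = T_a^{-1}T_b^{-1}(c)$, forcing an additional round of simplification using the chain relations (exactly the kind of manipulation done at the end of Lemma~\ref{lem:stab_e_torelli}). An alternative, perhaps cleaner, route is to invoke the orientation-preserving symmetry of $S_2$ that reverses the chain $a,b,c,d,e$, swapping $a\leftrightarrow e$ and $b\leftrightarrow d$ while fixing $c$; such a hyperelliptic-type symmetry exists and conjugation by it gives the result in one line, but one still owes the reader a verification that it restricts correctly to $\I(S_2)$ and to the stabilizer. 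Either way, once $G$ and its action on $\{a,b,c\}$ are nailed down, the proof is a direct substitution with no further surprises.
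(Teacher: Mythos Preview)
Your proposal is correct, and the ``alternative, perhaps cleaner, route'' you mention at the end is precisely what the paper does: it invokes the handle-swap involution $F\in\map(S_2)$ satisfying $F(e)=a$, $F(b)=d$, $F(c)=c$, and obtains the result in one line by conjugating the generating set from Lemma~\ref{lem:stab_e_torelli}. Your primary suggestion of building $G$ as a product of Dehn twists along the chain would also work but, as you anticipate, risks sending $c$ elsewhere and forcing extra simplification; the paper avoids this entirely by going straight to the geometric involution.
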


\begin{proof}
Consider the involution $F\in \map(S_2)$ (a handle swap map) such that $F(e)=a$. Since $\stab_{\I(S_2)}(a)=F\stab_{\I(S_2)}(e)F^{-1}$, $F(b)=d$, and $F(c)=c$, we have
\[
\stab_{\I(S_2)}(a)=\langle T_e^{-m-1}T_c^mT_d^{-1}T_c^n(T_eT_d)^6T_c^{-n}T_dT_c^{-m}T_e^{m+1}\mid m,n\in \Z  \rangle.
\]
\end{proof}

\begin{lem}
\label{lem:stab_torelli_klien_c}
We have
\[
\stab_{\I(S_2)}(c)=\langle (T_a^{-1}T_e)^{m+1}T_dT_eT_b^{-1}T_c^nT_bT_d^{-1}T_c(T_aT_b)^6T_c^{-1}T_dT_b^{-1}T_c^{-n}T_bT_e^{-1}T_d^{-1}(T_aT_e^{-1})^{m+1}\mid m,n\in \Z\rangle
\]
\end{lem}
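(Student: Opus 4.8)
The plan is to transport the generating set for $\stab_{\I(S_2)}(e)$ recorded in Lemma~\ref{lem:stab_e_torelli} to $\stab_{\I(S_2)}(c)$ by a single conjugation, exactly in the spirit of the proofs of Lemma~\ref{lem:stab_b_torelli} and Lemma~\ref{lem:stab_torelli_klien_a}. For any $F\in\map(S_2)$, conjugation by $F$ is an automorphism of $\map(S_2)$ which, since $\I(S_2)\triangleleft\map(S_2)$, preserves $\I(S_2)$ and carries $\stab_{\I(S_2)}(e)$ isomorphically onto $\stab_{\I(S_2)}(F(e))$. So it suffices to fix an explicit $F$ with $F(e)=c$ and to understand how $F$ moves the curves $a,b,c$ occurring in the generators of $\stab_{\I(S_2)}(e)$. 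A convenient choice is the word $F=T_dT_cT_eT_d$ already used in Section~\ref{subsec:cyclic_liftable}, for which $F(e)=c$.

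First I would compute $F(a),F(b),F(c)$ from the surface picture. Since $a$ is disjoint from each of $c,d,e$, one gets $F(a)=a$; since $b$ is disjoint from $d$ and $e$, $F(b)$ reduces to $T_dT_c(b)$; and $F(c)$ is obtained by applying the four twists in turn to $c$. Feeding these into the conjugate of the generator $T_a^{-m-1}T_c^mT_b^{-1}T_c^n(T_aT_b)^6T_c^{-n}T_bT_c^{-m}T_a^{m+1}$ and using $FT_xF^{-1}=T_{F(x)}$ yields
\[
\stab_{\I(S_2)}(c)=\big\langle\, T_{F(a)}^{-m-1}T_{F(c)}^{m}T_{F(b)}^{-1}T_{F(c)}^{n}\big(T_{F(a)}T_{F(b)}\big)^{6}T_{F(c)}^{-n}T_{F(b)}T_{F(c)}^{-m}T_{F(a)}^{m+1}\mid m,n\in\Z\,\big\rangle.
\]

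Next I would simplify. Using that disjoint twists commute, the braid relations recalled in the Dehn-twist lemma above, and repeated use of $FT_xF^{-1}=T_{F(x)}$ — the same bookkeeping carried out in the displayed chains of equalities in the proofs of Lemma~\ref{lem:stab_b_torelli} and Corollary~\ref{cor:stab_e_lmod} — one rewrites the central block $\big(T_{F(a)}T_{F(b)}\big)^6$ as $(T_aT_b)^6$ conjugated by a short word (note $F(a)=a$ commutes with $T_c,T_d$, so $\big(T_aT_{T_dT_c(b)}\big)^6=T_dT_c(T_aT_b)^6T_c^{-1}T_d^{-1}$), and then collects the remaining twists. Since $T_a$ and $T_e$ commute and $T_e$ commutes with each of $T_a,T_b,T_c$ and with $(T_aT_b)^6$, the tail factors can be normalized so that each generator takes the advertised form $W_{m,n}(T_aT_b)^6W_{m,n}^{-1}$ with $W_{m,n}=(T_a^{-1}T_e)^{m+1}T_dT_eT_b^{-1}T_c^nT_bT_d^{-1}T_c$, which is precisely the stated word.

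The main obstacle is entirely computational rather than conceptual. It has two parts: (i) identifying $F(c)$ precisely, which is delicate because $c$ meets two of the four twisting curves, so the nested twists must be tracked carefully to re-express $F(c)$ on the surface in terms of $a,b,c,d,e$; and (ii) the lengthy but mechanical word manipulation turning the conjugated generator into the stated normal form, where sign and ordering errors are easy to introduce. Both should be handled by writing the argument as an explicit sequence of equalities, following the template of the earlier lemmas; no new ideas are needed.
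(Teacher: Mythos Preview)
Your approach is essentially identical to the paper's: it too conjugates the generators from Lemma~\ref{lem:stab_e_torelli} by $G=T_dT_cT_eT_d$, records $G(a)=a$, $G(b)=T_dT_c(b)$, and then performs the long displayed chain of braid and commutation simplifications. The one point you flag as a main obstacle is in fact immediate: since $T_c$ and $T_e$ commute one has $G=T_dT_eT_cT_d$, whence $T_cT_d(c)=d$ and $T_dT_e(d)=e$ give $G(c)=e$, and with this the simplification proceeds exactly as you outline.
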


\begin{proof}
For the computation of $\stab_{\I(S_2)}(c)$, since for $G=T_dT_cT_eT_d$, we have $G(e)=c$. Thus, $\stab_{\I(S_2)}(c)=G\stab_{\I(S_2)}(e)G^{-1}$. Furthermore, we observe that $G(a)=a$, $G(b)=T_dT_c(b)=:x$, and $G(c)=e$. Therefore,
\[
\stab_{\I(S_2)}(c)=\langle
T_a^{-m-1}T_e^mT_x^{-1}T_e^n(T_aT_x)^6T_e^{-n}T_xT_e^{-m}T_a^{m+1}\mid m,n\in \Z\rangle.
\]
Simplifying, we get
\begin{align*}
&T_a^{-m-1}T_e^mT_x^{-1}T_e^n(T_aT_x)^6T_e^{-n}T_xT_e^{-m}T_a^{m+1}\\
&=T_a^{-m-1}T_e^mT_dT_cT_b^{-1}T_c^{-1}(T_d^{-1}T_e^nT_d)T_c(T_aT_b)^6(T_a^{-m-1}T_e^mT_dT_cT_b^{-1}T_c^{-1}(T_d^{-1}T_e^nT_d)T_c)^{-1}\\
&=T_a^{-m-1}T_e^mT_dT_cT_b^{-1}T_c^{-1}T_eT_d^nT_e^{-1}T_c(T_aT_b)^6(T_a^{-m-1}T_e^mT_dT_cT_b^{-1}T_c^{-1}T_eT_d^nT_e^{-1}T_c)^{-1}\\
&=T_a^{-m-1}T_e^mT_dT_eT_cT_b^{-1}(T_c^{-1}T_d^nT_c)(T_aT_b)^6(T_a^{-m-1}T_e^mT_dT_eT_cT_b^{-1}(T_c^{-1}T_d^nT_c))^{-1}\\
&=T_a^{-m-1}T_e^mT_dT_eT_cT_b^{-1}T_dT_c^nT_d^{-1}(T_aT_b)^6(T_a^{-m-1}T_e^mT_dT_eT_cT_b^{-1}T_dT_c^nT_d^{-1})^{-1}\\
&=T_a^{-m-1}T_e^mT_dT_eT_cT_d(T_b^{-1}T_c^nT_b)(T_aT_b)^6(T_a^{-m-1}T_e^mT_dT_eT_cT_d(T_b^{-1}T_c^nT_b))^{-1}\\
&=T_a^{-m-1}T_e^mT_dT_e(T_cT_dT_c)T_b^nT_c^{-1}(T_aT_b)^6(T_a^{-m-1}T_e^mT_dT_e(T_cT_dT_c)T_b^nT_c^{-1})^{-1}\\
&=T_a^{-m-1}T_e^m(T_dT_eT_d)T_cT_dT_b^nT_c^{-1}(T_aT_b)^6(T_a^{-m-1}T_e^m(T_dT_eT_d)T_cT_dT_b^nT_c^{-1})^{-1}\\
&=(T_a^{-1}T_e)^{m+1}T_dT_eT_cT_b^nT_dT_c^{-1}(T_aT_b)^6((T_a^{-1}T_e)^{m+1}T_dT_eT_cT_b^nT_dT_c^{-1})^{-1}\\
&=(T_a^{-1}T_e)^{m+1}T_dT_e(T_cT_b^nT_c^{-1})(T_cT_dT_c^{-1})(T_aT_b)^6((T_a^{-1}T_e)^{m+1}T_dT_e(T_cT_b^nT_c^{-1})(T_cT_dT_c^{-1}))^{-1}\\
&=(T_a^{-1}T_e)^{m+1}T_dT_eT_b^{-1}T_c^nT_bT_d^{-1}T_c(T_aT_b)^6((T_a^{-1}T_e)^{m+1}T_dT_eT_b^{-1}T_c^nT_bT_d^{-1}T_c)^{-1}
\end{align*}
Hence, the result follows.
\end{proof}

For Step 5 of Algorithm~\ref{algo}, we derive a finite generating set for $\stab_{\Psi(\lmap_p(S_2))}(\pm [\gamma])$ for $\gamma=a,c,e$. Since $\iota\in \lmap_p(S_2)$ and $\Psi(\iota)[\gamma]=-\I_{4\times 4}([\gamma
])=-[\gamma]$, we compute $\stab_{\Psi(\lmap_p(S_2))}([\gamma])$. For our computation we will use the following matrices
\[
M=\begin{pmatrix}
 1 & 0 & 0 & 0 \\ 
 0 & 1 & 0 & 1 \\ 
 -1 & 0 & 1 & 0 \\ 
 0 & 0 & 0 & 1
 \end{pmatrix},
M'=\begin{pmatrix}
1 & 0 & 0 & 1 \\ 
0 & 1 & 0 & 0 \\ 
0 & 1 & 1 & 0 \\ 
0 & 0 & 0 & 1
\end{pmatrix}
\text{, and }
N'=\begin{pmatrix}
1 & 0 & 1 & 0 \\ 
0 & 1 & 0 & 0 \\ 
0 & 0 & 1 & 0 \\ 
0 & -1 & 0 & 1
\end{pmatrix},
\]
where $M'=\Psi(T_e)\Psi(T_a)\Psi(T_c)^{-1}$ and $N'=\Psi(T_e)\Psi(T_d)\Psi(T_e)M'(\Psi(T_e)\Psi(T_d)\Psi(T_e))^{-1}$. 

\begin{lem}
\label{lem:klein_stab_matrix_a}
We have $\stab_{\Psi(\lmap_p(S_2))}(\pm [a])=\langle (M')^2,(N')^2,\Psi(T_a),\Psi(T_d),\Psi(T_e),\iota
\rangle$.
\end{lem}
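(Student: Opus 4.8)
The plan is to mimic the proofs of Lemmas~\ref{lem:stab_e_matrices} and~\ref{lem:stab_b_matrices}. Since $\Psi(\iota)=-I_{4\times 4}\in\Psi(\lmap_p(S_2))$ and $-I_{4\times 4}([a])=-[a]$, it suffices to compute $\stab_{\Psi(\lmap_p(S_2))}([a])$ and then adjoin $\iota$. Writing $[a]=e_1\in\Z^4$, any $A\in\stab_{\Psi(\lmap_p(S_2))}(e_1)$ is a symplectic matrix with first column $e_1$; since $A$ must preserve the functional $v\mapsto\hat{i}(v,e_1)$, the second row of $A$ is $(0,1,0,0)$, the bottom-right block $\left(\begin{smallmatrix}a_{33}&a_{34}\\a_{43}&a_{44}\end{smallmatrix}\right)$ lies in $\Sl(2,\Z)$, and the entries $a_{13},a_{14}$ are forced by the remaining symplectic relations to be $a_{13}=a_{32}a_{43}-a_{42}a_{33}$ and $a_{14}=a_{32}a_{44}-a_{42}a_{34}$. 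Thus
\[
A=\begin{pmatrix}
1 & a_{12} & a_{13} & a_{14}\\
0 & 1 & 0 & 0\\
0 & a_{32} & a_{33} & a_{34}\\
0 & a_{42} & a_{43} & a_{44}
\end{pmatrix},
\]
and feeding this into the symplectic criterion of Corollary~\ref{cor:klein_symp}, the only surviving conditions are that $a_{32}$ and $a_{42}$ are even, which in turn makes $a_{13},a_{14}$ automatically even.

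Next I would run the reduction. As in the proof of Lemma~\ref{lem:stab_b_matrices}, the subgroup $\langle\Psi(T_d),\Psi(T_e)\rangle$ surjects onto the copy of $\Sl(2,\Z)$ acting on $\langle e_3,e_4\rangle$ and trivially elsewhere, with every element fixing $e_1$ and lying in $\Psi(\lmap_p(S_2))$; so there is $B_1$ in this subgroup with $B_1A$ having bottom-right block $I_{2\times 2}$. The symplectic relations then pin $B_1A$ down so that its only possibly nonzero off-diagonal entries are $a_{12}$ and the coupled pairs $a_{14}=a_{32}$, $a_{13}=-a_{42}$, with $a_{32},a_{42}$ still even. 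A direct computation from the displayed forms of $M'$ and $N'$ shows that $(M')^2$ changes $(a_{32},a_{14})$ by $(2,2)$ (and modifies only the free entry $a_{12}$ otherwise), while $(N')^2$ changes $(a_{42},a_{13})$ by $(-2,2)$; since $a_{32},a_{42}$ are even, suitable powers of $(M')^2$ and $(N')^2$ annihilate these entries, leaving a power of the transvection $\Psi(T_a)$. Unwinding expresses $A$ as a product of the listed generators, so $\stab_{\Psi(\lmap_p(S_2))}(e_1)\subseteq\langle (M')^2,(N')^2,\Psi(T_a),\Psi(T_d),\Psi(T_e)\rangle$; the reverse inclusion is immediate once one checks with Corollary~\ref{cor:klein_symp} that each generator preserves $e_1$ (or $\pm e_1$, for $\iota$) and lies in $\Psi(\lmap_p(S_2))$.

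The one point requiring care — what I expect to be the main obstacle — is the bookkeeping that the reduction never leaves the group and that precisely these generators suffice: because the symplectic constraint ties $a_{14}$ to $a_{32}$ and $a_{13}$ to $-a_{42}$ once the bottom-right block is the identity, one must verify that the two matrices $(M')^2,(N')^2$ (their squares being needed so as to satisfy the parity conditions of Corollary~\ref{cor:klein_symp}) clear both even entries simultaneously, and that every intermediate matrix still obeys those parity conditions. This is a finite, mechanical check with the explicit $4\times 4$ matrices, entirely analogous to the computations in the proofs of Lemmas~\ref{lem:stab_e_matrices} and~\ref{lem:stab_b_matrices}.
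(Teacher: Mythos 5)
Your proposal is correct and follows essentially the same route as the paper: write down the explicit form a symplectic matrix fixing $e_1$ must take under the parity criterion of Corollary~\ref{cor:klein_symp}, reduce the bottom-right block to the identity with $\langle\Psi(T_d),\Psi(T_e)\rangle$, kill $a_{12}$ with $\Psi(T_a)$, and clear the remaining (even, coupled) entries with powers of $(M')^2$ and $(N')^2$, adjoining $\iota$ to pass from $[a]$ to $\pm[a]$. The only cosmetic differences are that you make the symplectic relations $a_{13}=a_{32}a_{43}-a_{42}a_{33}$, $a_{14}=a_{32}a_{44}-a_{42}a_{34}$ explicit and perform the $\Psi(T_a)$-cleanup at the end rather than the start.
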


\begin{proof}
From the description of $\Psi(\lmap_p(S_2))$, it can be seen that if $A\in \stab_{\Psi(\lmap_p(S_2))}([a]=e_1)$, then
\[
A=\begin{pmatrix}
1 & a_{12} & a_{13} & a_{14} \\ 
0 & 1 & 0 & 0 \\ 
0 & a_{32} & a_{33} & a_{34} \\ 
0 & a_{42} & a_{43} & a_{44}
\end{pmatrix},
\]
where $a_{ij}\in \Z$, $a_{32},a_{42},a_{14},a_{13}\in 2\Z$, and $a_{33}a_{44}-a_{43}a_{34}=1$. Pre-multiplying $\Psi(T_a)^{-a_{12}}A$ with a suitable product of powers of matrices $\Psi(T_d)$ and $\Psi(T_e)$, we obtain
\[
A'=\begin{pmatrix}
1 & 0 & a_{13} & a_{14} \\ 
0 & 1 & 0 & 0 \\ 
0 & a_{14} & 1 & 0 \\ 
0 & -a_{13} & 0 & 1
\end{pmatrix}.
\]
Since $\Psi(T_a)^{-a_{13}a_{14}}(M')^{-a_{14}}(N')^{-a_{13}}A'=I_{4\times 4}$, it follows that
\[
\stab_{\Psi(\lmap_p(S_2))}(\pm [a])=\langle (M')^2,(N')^2,\Psi(T_a),\Psi(T_d),\Psi(T_e),\iota
\rangle.
\]
\end{proof}

\begin{lem}
\label{lem:klein_stab_matrix_e}
We have $\stab_{\Psi(\lmap_p(S_2))}(\pm [e])=\langle M^2,(M')^2,\Psi(T_a),\Psi(T_b),\Psi(T_e),\iota
\rangle$.
\end{lem}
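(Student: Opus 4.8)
The plan is to follow the same template as the proof of Lemma~\ref{lem:klein_stab_matrix_a} (and of Lemma~\ref{lem:stab_e_matrices}); the essential difference from the cyclic setting is that, because the liftability condition of Corollary~\ref{cor:klein_symp} is a mod-$2$ condition which does \emph{not} constrain the top-left $2\times 2$ block, that block will range over all of $\Sl(2,\Z)$ rather than over a congruence subgroup. Since $\iota$ maps to $-I_{4\times 4}\in\Psi(\lmap_p(S_2))$ and $-I_{4\times 4}$ sends $[e]$ to $-[e]$, it suffices to compute $\stab_{\Psi(\lmap_p(S_2))}([e])$, and we recall that $[e]=e_3$. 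First I would determine the normal form of an $A$ in this stabilizer: the condition $Ae_3=e_3$ pins down the third column, and then the symplectic identity $\hat{i}(e_3,Ax)=\hat{i}(e_3,x)$ for all $x$ (which holds because $Ae_3=e_3$) forces the fourth row to be $(0,0,0,1)$; feeding this back into the remaining symplectic relations and into Corollary~\ref{cor:klein_symp} shows that
\[
A=\begin{pmatrix}
a_{11}&a_{12}&0&a_{14}\\
a_{21}&a_{22}&0&a_{24}\\
a_{21}a_{14}-a_{11}a_{24}&a_{22}a_{14}-a_{12}a_{24}&1&a_{34}\\
0&0&0&1
\end{pmatrix},
\]
where $P:=\bigl(\begin{smallmatrix}a_{11}&a_{12}\\a_{21}&a_{22}\end{smallmatrix}\bigr)\in\Sl(2,\Z)$, the entries $a_{14},a_{24}$ are even, and $a_{34}\in\Z$ is arbitrary (the two entries in the third row being then determined, and automatically even).

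Next I would clear $A$ in three steps, all of whose moves fix $e_3$ (indeed $Me_3=M'e_3=\Psi(T_e)e_3=e_3$, and since $i(a,e)=i(b,e)=0$ also $\Psi(T_a)e_3=\Psi(T_b)e_3=e_3$). Because $i(a,b)=1$ while $i(a,d)=i(a,e)=i(b,d)=i(b,e)=0$, the plane $\langle[a],[b]\rangle$ equals $\langle e_1,e_2\rangle$, and the subgroup $\langle\Psi(T_a),\Psi(T_b)\rangle$ is precisely the copy of $\Sl(2,\Z)$ consisting of all symplectic matrices that fix $e_3$ and $e_4$ pointwise; left-multiplying $A$ by a suitable element of it reduces $P$ to $I_{2\times 2}$. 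What remains is a matrix whose only off-diagonal entries are the (even) entries in the fourth column together with the (even) entries in the third row. Here the key point is that, although $M$ and $M'$ themselves are \emph{not} liftable, their squares are --- their relevant off-diagonal entries being doubled --- and $M^2,(M')^2$ are exactly what is needed to clear \emph{even} entries: writing the remaining even entries as $2p$ and $2q$, left-multiplication by $(M')^{-2p}$ and then $M^{-2q}$ clears the fourth-column and the third-row entries simultaneously, leaving a power of $\Psi(T_e)$. A final left-multiplication by the appropriate power of $\Psi(T_e)$ yields $I_{4\times 4}$. This proves $\stab_{\Psi(\lmap_p(S_2))}([e])\subseteq\langle M^2,(M')^2,\Psi(T_a),\Psi(T_b),\Psi(T_e)\rangle$; the reverse inclusion is immediate since each listed generator fixes $e_3$ and is the $\Psi$-image of an element of $\lmap_p(S_2)$, and adjoining $\iota$ accounts for the sign in $\pm[e]$.

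I expect the only genuinely delicate step to be the first one --- correctly extracting the normal form of $A$, and in particular the fact that the top-left block is all of $\Sl(2,\Z)$, which is exactly the feature distinguishing this $\Z_2\oplus\Z_2$-cover from the cyclic covers of Lemma~\ref{lem:stab_e_matrices} --- together with the bookkeeping needed to verify that clearing the $\Sl(2,\Z)$-block and then the even off-diagonal entries does not reintroduce entries elsewhere. Everything after that is a sequence of $4\times 4$ symplectic matrix multiplications entirely parallel to the proof of Lemma~\ref{lem:klein_stab_matrix_a}.
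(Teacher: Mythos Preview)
Your proposal is correct and follows essentially the same approach as the paper's proof: derive the normal form of a stabilizing symplectic matrix from $Ae_3=e_3$ together with Corollary~\ref{cor:klein_symp}, clear the unconstrained $\Sl(2,\Z)$ top-left block with $\langle\Psi(T_a),\Psi(T_b)\rangle$, and then use even powers of $M$ and $M'$ (which are exactly $M^2$ and $(M')^2$, since the relevant entries are even) together with $\Psi(T_e)$ to reduce to the identity. The only cosmetic difference is the order of operations---the paper kills $a_{34}$ with $\Psi(T_e)$ first, then clears the $\Sl(2,\Z)$ block, then applies $M^{a_{31}}$ and $(M')^{-a_{32}}$---but the reductions and the resulting generating set are identical.
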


\begin{proof}
As before, it follows that if $A\in \stab_{\Psi(\lmap_p(S_2))}([e]=e_3)$, then
\[
A=\begin{pmatrix}
a_{11} & a_{12} & 0 & a_{14} \\ 
a_{21} & a_{22} & 0 & a_{24} \\ 
a_{31} & a_{32} & 1 & a_{34} \\ 
0 & 0 & 0 & 1
\end{pmatrix},
\]
where $a_{ij}\in \Z$, $a_{31},a_{32},a_{14},a_{24}\in 2\Z$, and $a_{11}a_{22}-a_{12}a_{21}=1$. Pre-multiplying $\Psi(T_e)^{-a_{34}}A$ with a suitable product of powers of $\Psi(T_a)$ and $\Psi(T_b)$, we obtain
\[
A'=\begin{pmatrix}
1 & 0 & 0 & a_{32} \\ 
0 & 1 & 0 & -a_{31} \\ 
a_{31} & a_{32} & 1 & 0 \\ 
0 & 0 & 0 & 1
\end{pmatrix}.
\]
Using matrices $M$ from Equation~\ref{eqn:M_matrices}, we have $\Psi(T_e)^{-a_{31}a_{32}}(M')^{-a_{32}}M^{a_{31}}A'=I_{4\times 4}$. Now, it follows that
\[
\stab_{\Psi(\lmap_p(S_2))}(\pm [e])=\langle M^2,(M')^2,\Psi(T_a),\Psi(T_b),\Psi(T_e),\iota
\rangle.
\]
\end{proof}

For the next computation, we will use the following matrices:
\[
J=\begin{pmatrix}
3 & 0 & 2 & 0 \\
0 & -1 & 0 & 2 \\ 
-2 & 0 & -1 & 0 \\
0 & -2 & 0 & 3 
\end{pmatrix},
P=\begin{pmatrix}
0 & -1 & 0 & 0 \\
1 & 0 & 0 & 1 \\ 
0 & 1 & 1 & 0 \\
0 & 0 & 0 & 1 
\end{pmatrix},\text{ and }
I=\begin{pmatrix}
1 & 0 & 0 & 0 \\
0 & 1 & 0 & 0 \\ 
0 & 0 & -1 & 0 \\
0 & 0 & 0 & -1 
\end{pmatrix}.
\] 

\begin{lem}
\label{lem:klein_stab_matrix_c}
We have
\[
\stab_{\Psi(\lmap_p(S_2))}(\pm[c])=\langle \Psi(T_a), \Psi(T_e), \Psi(T_c^2),\iota, M^2(N')^2, M^2(T_dT_e)^3, \Psi(T_dT_cT_b^2T_c^{-1}T_d^{-1})\rangle.
\]
\end{lem}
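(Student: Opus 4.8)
The plan is to imitate the reduction arguments of Lemmas~\ref{lem:stab_e_matrices}, \ref{lem:klein_stab_matrix_a} and~\ref{lem:klein_stab_matrix_e}: exhibit a general element of the stabilizer as a parametrized integer symplectic matrix and then clear its entries by left-multiplying by the listed generators; the only new wrinkle is that $[c]$ is not a standard basis vector. Since $\Psi(\iota)=-I_{4\times 4}$ sends $[c]$ to $-[c]$ and (by a direct check using the explicit matrices $M,M',N'$) so does $M^2(N')^2$, we have $\stab_{\Psi(\lmap_p(S_2))}(\pm[c])=\langle \stab_{\Psi(\lmap_p(S_2))}([c]),\iota\rangle$, and it is enough to prove that $\stab_{\Psi(\lmap_p(S_2))}([c])$ is generated by $\Psi(T_a)$, $\Psi(T_e)$, $\Psi(T_c^2)$, $M^2(T_dT_e)^3$, $M^2(N')^2\iota$, and $\Psi(T_dT_cT_b^2T_c^{-1}T_d^{-1})$. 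With the identifications used in this section (as in Lemmas~\ref{lem:klein_stab_matrix_a}--\ref{lem:klein_stab_matrix_e}, $[a]=e_1$, $[e]=e_3$) and the chain configuration $a$-$b$-$c$-$d$-$e$ one gets $[b]=e_2$, $[d]=e_4$, and $[c]=e_1-e_3$ after possibly reversing the orientation of $c$; note that $\{e_1-e_3,\,e_2,\,e_3,\,e_2+e_4\}$ is then a symplectic basis of $H_1(S_2;\Z)$ adapted to $[c]$.

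First I would write down the general $A\in\stab_{\Psi(\lmap_p(S_2))}([c])$. Imposing $A(e_1-e_3)=e_1-e_3$, the symplectic relations, and the eight parity conditions of Corollary~\ref{cor:klein_symp} leaves $A$ depending on a bounded family of integer parameters. In the adapted basis above, $A$ breaks into its action on $[c]^{\perp}/\langle[c]\rangle\cong\Z^2$ (a $2\times 2$ block) together with a unipotent part fixing $[c]$, so that $\stab_{\Psi(\lmap_p(S_2))}([c])$ sits in a short exact sequence
\[
1\longrightarrow\mathcal H\longrightarrow\stab_{\Psi(\lmap_p(S_2))}([c])\longrightarrow\Gamma\longrightarrow1,
\]
with $\Gamma\le\Sl(2,\Z)$ the image of the action on $[c]^{\perp}/\langle[c]\rangle$ and $\mathcal H$ its kernel, a subgroup of the integral Heisenberg group. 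Reducing the conditions of Corollary~\ref{cor:klein_symp} modulo $2$ and comparing with the orbit computation of Lemma~\ref{lem:quotient_graph_klein} identifies $\Gamma$ as a congruence subgroup of $\Sl(2,\Z)$ conjugate to $\Gamma_0(2)$ and determines $\mathcal H$ explicitly.

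The reduction then runs in two stages. In the first, I would verify that $\Psi(T_e)$ and $\Psi(T_dT_cT_b^2T_c^{-1}T_d^{-1})=\Psi(T_{T_dT_c(b)})^2$ — the latter being a squared transvection about a curve homologous away from $c$, hence in $\lmap_p(S_2)$ — project onto the elements $\left(\begin{smallmatrix}1&1\\0&1\end{smallmatrix}\right)$ and $\left(\begin{smallmatrix}1&0\\-2&1\end{smallmatrix}\right)$ of $\Gamma$; these, together with $-I$ (supplied by $\iota$), generate $\Gamma$, so after left-multiplying $A$ by a suitable word in these elements we may assume the $2\times 2$ block of $A$ is trivial, i.e.\ $A$ is unipotent and fixes $[c]$. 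In the second stage I would bring this unipotent matrix to $I_{4\times 4}$ using $\Psi(T_a)$, $\Psi(T_c^2)$ (the squared transvection along $[c]$, the central direction of $\mathcal H$), and the two remaining matrices $M^2(N')^2\iota$ and $M^2(T_dT_e)^3$; using the explicit forms of $M,M',N'$ and the identity $\Psi((T_dT_e)^3)=\mathrm{diag}(1,1,-1,-1)$ one first checks that each of these indeed lies in $\stab_{\Psi(\lmap_p(S_2))}([c])$, and then clears the remaining off-diagonal entries one by one. At every step one must confirm that the multiplying matrix still satisfies the parity conditions of Corollary~\ref{cor:klein_symp}.

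I expect the second stage to be the main obstacle. The group $\mathcal H$ is nilpotent of class $2$ and Hirsch length $3$, so one needs enough ``shear'' generators modulo its centre together with tight control of the centre itself, and the delicate point is that $\Psi(T_a)$, $\Psi(T_c^2)$, $M^2(N')^2\iota$ and $M^2(T_dT_e)^3$ must generate precisely the sublattice of the Heisenberg group cut out by the parity conditions of Corollary~\ref{cor:klein_symp} — in particular no $\Z$-factor may be omitted — which is what forces the order in which the entries of $A$ are cleared. The identification of $\Gamma$, and the verification that the two displayed matrices together with $\iota$ generate it, is routine by comparison with the first-stage data.
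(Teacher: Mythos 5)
Your overall strategy is sound and is in fact close in spirit to the paper's argument: the paper also first normalizes a general stabilizer element by explicit words in $\Psi(T_a),\Psi(T_e),\Psi(T_c)$ and powers of $J=\iota(N')^{-2}M^{-2}$, and then conjugates by an explicit matrix $P$ so that what remains is a $2\times 2$ block lying in $\Gamma(2)=\langle\Psi(T_d)^2,\Psi(T_e)^2,\mathrm{diag}(1,1,-1,-1)\rangle$, after which the conjugated generators are rewritten as the seven listed matrices. Your stage 1 is essentially complete: in the adapted basis the lower-left entry of the induced $2\times 2$ matrix equals $a_{43}=a_{23}$, which is even by Corollary~\ref{cor:klein_symp}, so the image of $\stab_{\Psi(\lmap_p(S_2))}([c])$ on $[c]^{\perp}/\langle[c]\rangle$ does land in a copy of $\Gamma_0(2)$, and $\Psi(T_e)$ and $\Psi(T_dT_cT_b^2T_c^{-1}T_d^{-1})$ do project to the standard unipotent generators, so the image part of $A$ can be killed.

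The genuine gap is your stage 2, which is exactly where the lemma lives. First, a structural slip: $\Psi(T_a)$ projects to the same upper unipotent $\left(\begin{smallmatrix}1&-1\\0&1\end{smallmatrix}\right)$ as $\Psi(T_e)$, and $M^2(T_dT_e)^3$ projects to $-I_{2\times 2}$, so neither lies in the kernel $\mathcal{H}$ of the action on $[c]^{\perp}/\langle[c]\rangle$; ``clearing the unipotent matrix'' entry by entry with those four elements, as written, does not parse, and one must instead work with mixed words such as $\Psi(T_aT_e^{-1})$ and $M^2(N')^2\iota$ (which do lie in $\mathcal{H}$) together with the central element $\Psi(T_c^2)$, and with conjugates by the stage-1 generators. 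More seriously, you never determine the congruence sublattice $\mathcal{H}'=\mathcal{H}\cap\Psi(\lmap_p(S_2))$ (two shear parameters and a central parameter, each constrained mod $2$ by Corollary~\ref{cor:klein_symp}), nor verify that the kernel elements available from your list exhaust it; you explicitly defer this as ``the main obstacle.'' But that verification is the entire content of the lemma --- it is precisely what justifies the presence of $M^2(N')^2$, $M^2(T_dT_e)^3$ and $\Psi(T_c^2)$ in the generating set, and it is what the paper's explicit reduction $A\mapsto A_1\mapsto A_2=P^{-1}A_1P$ and the $\Gamma(2)$ identification carry out. As it stands, your argument shows only that the images of the proposed generators generate the correct quotient, not that they generate the stabilizer itself.
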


\begin{proof}
If $A\in\stab_{\Psi(\lmap_p(S_2))}([c]=e_3-e_1)$, then
\[
A=\begin{pmatrix}
2a_{13}+1 & a_{12} & 2a_{13} & 2a_{14} \\ 
2a_{23} & 2a_{42}+1 & 2a_{23} & 2a_{24} \\ 
2a_{31} & 2a_{32} & 2a_{31}+1 & a_{34} \\ 
2a_{23} & 2a_{42} & 2a_{23} & 2a_{24}+1
\end{pmatrix}.
\]
Since $\Psi(T_a),\Psi(T_e)\in \stab_{\Psi(\lmap_p(S_2))}([c])$, without loss of generality, we can assume that
\[
A=\begin{pmatrix}
2a_{13}+1 & 2a_{12} & 2a_{13} & 2a_{14} \\ 
2a_{23} & 2a_{42}+1 & 2a_{23} & 2a_{24} \\ 
2a_{31} & 2a_{32} & 2a_{31}+1 & 2a_{34} \\ 
2a_{23} & 2a_{42} & 2a_{23} & 2a_{24}+1
\end{pmatrix}.
\]

Then $A_1=\Psi(T_e^{(2a_{12}+2a_{32})}T_a^{(-2a_{12}-2a_{32})}T_c^{(-4a_{12}a_{42}-4a_{32}a_{42}+2a_{32})})J^{a_{42}}A$, has the following form 
\[
A_1=\begin{pmatrix}
1 & 0 & 0 & 0 \\ 
2a_{23} & 1 & 2a_{23} & 2a_{42}+2a_{24} \\ 
2a_{13}+2a_{31} & 0 & 2a_{13}+2a_{31}+1 & 2a_{12}+2a_{32}+2a_{14}+2a_{34} \\ 
2a_{23} & 0 & 2a_{23} & 2a_{42}+2a_{24}+1
\end{pmatrix}.
\]
By taking a conjugate with matrix $P$, $A_2=P^{-1}A_1P$ has the form
\[
A_2=\begin{pmatrix}
1 & 0 & 0 & 0 \\ 
0 & 1 & 0 & 0 \\ 
0 & 0 & 2a_{13}+2a_{31}+1 & 2a_{12}+2a_{32}+2a_{14}+2a_{34}) \\ 
0 & 0 & 2a_{23} & 2a_{22}+2a_{24}+1
\end{pmatrix}.
\]
Now, it can be seen that
\[
\begin{pmatrix}
 2a_{13}+2a_{31}+1 & 2a_{12}+2a_{32}+2a_{14}+2a_{34} \\ 
 2a_{23} & 2a_{22}+2a_{24}+1
\end{pmatrix}\in \Gamma(2),
\]
where $\Gamma(2)$ is the principle congruence subgroup of $\Sl(2,\Z)$ of level $2$. Since $A_2\in \langle \Psi(T_d)^2, \Psi(T_e)^2,I\rangle$, we have 
\[
\stab_{\Psi(\lmap_p(S_2))}([c])=\langle \Psi(T_a), \Psi(T_e), \Psi(T_c^2), P\Psi(T_d)^2P^{-1}, P\Psi(T_e)^2P^{-1},PIP^{-1} J\rangle.
\]
We have $P\Psi(T_e)^2P^{-1}=\Psi(T_e)^2$,  $P\Psi(T_d)^2P^{-1}=\Psi(T_{e}T_{a}^{-1}T_{d}T_{c}T_{b}^2(T_{e}T_{a}^{-1}T_{d}T_{c}))$, $PIP^{-1}=M^2\psi(T_dT_e)^3$, and $J=\iota (N')^{-2}M^{-2}$. Hence,
\[
\stab_{\Psi(\lmap_p(S_2))}(\pm[c])=\langle \Psi(T_a), \Psi(T_e), \Psi(T_c^2),\iota, M^2(N')^2, M^2(T_dT_e)^3, \Psi(T_dT_cT_b^2T_c^{-1}T_d^{-1})\rangle.
\]
\end{proof}

For Step 6 of Algorithm~\ref{algo}, we write $\stab_{\lmap_p(S_2)}(\gamma)$ for $\gamma=a,c,e$.

\begin{cor}
\label{cor:stab_lamod_klein_a}
We have $\stab_{\lmap_p(S_2)}(a)=\langle T_a,T_c^2,T_d,T_e,\iota,T_c^{-1}(T_eT_d)^6T_c\rangle$
\end{cor}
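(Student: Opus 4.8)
The plan is to carry out Steps 5--7 of Algorithm~\ref{algo} for the curve $\gamma=a$: combine the description of $\stab_{\I(S_2)}(a)$ from Lemma~\ref{lem:stab_torelli_klien_a} with that of $\stab_{\Psi(\lmap_p(S_2))}(\pm[a])$ from Lemma~\ref{lem:klein_stab_matrix_a} via the short-exact sequence~(\ref{ses:stab_lmod}), and then whittle the resulting (a priori infinite) generating set down to the six listed elements.

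First I would lift the matrix generators $(M')^2,(N')^2,\Psi(T_a),\Psi(T_d),\Psi(T_e),\iota$ of $\stab_{\Psi(\lmap_p(S_2))}(\pm[a])$ to $\stab_{\lmap_p(S_2)}(a)$. Since $\I(S_2)\subset\lmap_p(S_2)$ we have $\lmap_p(S_2)=\Psi^{-1}(\Psi(\lmap_p(S_2)))$, so the elements $T_a,T_d,T_e,\iota$ together with $(T_eT_aT_c^{-1})^2$ and $T_eT_dT_e(T_eT_aT_c^{-1})^2(T_eT_dT_e)^{-1}$ (recall $M'=\Psi(T_eT_aT_c^{-1})$ and $N'=\Psi(T_eT_dT_e)M'\Psi(T_eT_dT_e)^{-1}$) all lie in $\lmap_p(S_2)$ and form a set of preimages. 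Exactness of~(\ref{ses:stab_lmod}) then presents $\stab_{\lmap_p(S_2)}(a)$ as the group generated by these six elements together with the Torelli generators $g_{m,n}:=T_e^{-m-1}T_c^mT_d^{-1}T_c^n(T_eT_d)^6T_c^{-n}T_dT_c^{-m}T_e^{m+1}$ of Lemma~\ref{lem:stab_torelli_klien_a}. Because $a$, $c$, $e$ are pairwise disjoint, $T_a,T_c,T_e$ commute pairwise, whence $(T_eT_aT_c^{-1})^2=T_e^2T_a^2T_c^{-2}$; this simultaneously exhibits $T_c^2$ inside the group and makes the lift of $(N')^2$ redundant modulo $\{T_a,T_c^2,T_d,T_e\}$. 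At this stage $\stab_{\lmap_p(S_2)}(a)=\langle\{g_{m,n}\}\cup\{T_a,T_c^2,T_d,T_e,\iota\}\rangle$.

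Next I would simplify the family $\{g_{m,n}\}$. Writing $\zeta:=(T_eT_d)^6$ and $\xi:=T_c^{-1}\zeta T_c=T_c^{-1}(T_eT_d)^6T_c$, the chain relation makes $\zeta$ commute with $T_d$ and $T_e$, and the braid relation $T_d^{-1}T_cT_d=T_cT_dT_c^{-1}$ (valid since $i(c,d)=1$) gives $T_d^{-1}T_c^n\zeta T_c^{-n}T_d=T_cT_d^n\xi T_d^{-n}T_c^{-1}$; a one-line substitution then yields $g_{m,n}=T_e^{-m-1}T_c^{m+1}T_d^n\xi T_d^{-n}T_c^{-(m+1)}T_e^{m+1}$. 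As $T_d,T_e$ already lie in the group, the $g_{m,n}$ generate, modulo $\langle T_a,T_c^2,T_d,T_e,\iota\rangle$, the same subgroup as the elements $T_c^{m+1}T_d^n\xi T_d^{-n}T_c^{-(m+1)}$. When $m+1$ is even these lie in $\langle T_c^2,T_d,\xi\rangle$; when $m+1$ is odd the problem reduces, by running the previous identity backwards, to showing $T_cT_d^n\xi T_d^{-n}T_c^{-1}=T_d^{-1}(T_c^n\zeta T_c^{-n})T_d$ lies in the group, i.e.\ that $T_c^n\zeta T_c^{-n}$ does. For even $n$ this is the conjugate of $\zeta\in\langle T_d,T_e\rangle$ by a power of $T_c^2$; for odd $n$ the key move is $T_c\zeta T_c^{-1}=T_c^2(T_c^{-1}\zeta T_c)T_c^{-2}=T_c^2\xi T_c^{-2}$, which absorbs the lone odd power of $T_c$ using $T_c^2$ and $\xi$. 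Finally $\xi=g_{-1,0}$ is itself one of the Torelli generators, so the list is closed and we obtain $\stab_{\lmap_p(S_2)}(a)=\langle T_a,T_c^2,T_d,T_e,\iota,T_c^{-1}(T_eT_d)^6T_c\rangle$.

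The only genuinely delicate point is this last reduction: because the symplectic criterion of Corollary~\ref{cor:klein_symp} forces only $T_c^2$ (not $T_c$) into $\lmap_p(S_2)$, one cannot freely cancel $T_c$'s as in the cyclic situation of Corollary~\ref{cor:stab_e_lmod}, and every odd power of $T_c$ appearing in the conjugating words must be routed through the identity $T_c\zeta T_c^{-1}=T_c^2\xi T_c^{-2}$ together with the braid relation. Everything else is routine bookkeeping with commuting and braiding Dehn twists.
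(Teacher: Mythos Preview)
Your proof is correct and follows essentially the same route as the paper: combine Lemmas~\ref{lem:stab_torelli_klien_a} and~\ref{lem:klein_stab_matrix_a} via the short-exact sequence~(\ref{ses:stab_lmod}), use the braid relation $T_d^{-1}T_c^n\zeta T_c^{-n}T_d=T_cT_d^n\xi T_d^{-n}T_c^{-1}$ to convert the Torelli generators into the form $T_c^{m+1}T_d^n\xi T_d^{-n}T_c^{-(m+1)}$, and then reduce modulo $T_c^2,T_d,T_e$. Your treatment of the odd case via the identity $T_c\zeta T_c^{-1}=T_c^2\xi T_c^{-2}$ actually makes explicit a step that the paper's proof leaves implicit in passing from its second displayed line to the third.
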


\begin{proof}
By Lemmaa~\ref{lem:stab_torelli_klien_a} and \ref{lem:klein_stab_matrix_a}, it follows that
\begin{align*}
\stab_{\lmap_p(S_2)}(a)
&=\langle \{T_a,T_c^2,T_d,T_e,\iota\}\cup \{T_c^iT_d^{-1}T_c^n(T_eT_d)^6T_c^{-n}T_dT_c^{-i}\mid 0\leq i\leq 1,n\in \Z\}\rangle\\
&=\langle \{T_a,T_c^2,T_d,T_e,\iota\}\cup \{T_c^{i+1}T_d^nT_c^{-1}(T_eT_d)^6T_cT_d^{-n}T_c^{-i-1}\mid 0\leq i\leq 1,n\in \Z\}\rangle\\
&=\langle T_a,T_c^2,T_d,T_e,\iota,T_c^{-1}(T_eT_d)^6T_c\rangle.
\end{align*}
\end{proof}

\begin{cor}
\label{cor:stab_lamod_klein_e}
We have $\stab_{\lmap_p(S_2)}(e)=\langle T_a,T_b,T_c^2,T_e,\iota,T_c^{-1}(T_aT_b)^6T_c\rangle$.
\end{cor}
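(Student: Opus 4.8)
The plan is to carry out Step~6 of Algorithm~\ref{algo} for $\gamma = e$, in complete parallel with the proof of Corollary~\ref{cor:stab_lamod_klein_a}, feeding into the short exact sequence~\eqref{ses:stab_lmod}
\[
1 \longrightarrow \stab_{\I(S_2)}(e) \longrightarrow \stab_{\lmap_p(S_2)}(e) \longrightarrow \stab_{\Psi(\lmap_p(S_2))}(\pm[e]) \longrightarrow 1
\]
the generating set of the Torelli kernel from Lemma~\ref{lem:stab_e_torelli} and the generating set $\langle M^2,(M')^2,\Psi(T_a),\Psi(T_b),\Psi(T_e),\iota\rangle$ of the symplectic quotient from Lemma~\ref{lem:klein_stab_matrix_e}. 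First I would choose lifts of the symplectic generators: $T_a,T_b,T_e,\iota$ lift the obvious ones; since $i(a,c)=i(a,e)=i(c,e)=0$ and $M'=\Psi(T_eT_aT_c^{-1})$, the matrix $(M')^2$ lifts to $T_e^2T_a^2T_c^{-2}\in\langle T_a,T_e,T_c^2\rangle$; and using $M=\Psi(T_aT_bT_a^{-1}T_cT_b^{-1}T_a^{-1}T_e^{-1})$ together with the braid relation $T_aT_bT_a=T_bT_aT_b$ and the facts that $T_e$ commutes with $T_a,T_b,T_c$ while $T_c$ commutes with $T_a$, one finds a lift of $M^2$ equal to $T_aT_bT_c^2T_b^{-1}T_a^{-1}T_b^{-2}T_e^{-2}\in\langle T_a,T_b,T_c^2,T_e\rangle$. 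Hence
\[
\stab_{\lmap_p(S_2)}(e)=\Big\langle\{T_a,T_b,T_c^2,T_e,\iota\}\cup\big\{T_a^{-m-1}T_c^m T_b^{-1}T_c^n(T_aT_b)^6T_c^{-n}T_bT_c^{-m}T_a^{m+1}\ \big|\ m,n\in\Z\big\}\Big\rangle.
\]

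Next I would collapse the infinite family $\{g_{m,n}\}$ (writing $g_{m,n}$ for the displayed word) down to the single element $w:=T_c^{-1}(T_aT_b)^6T_c$. Since $[T_a,T_c]=1$, one checks $g_{m+2,n}=(T_c^2T_a^{-2})\,g_{m,n}\,(T_c^2T_a^{-2})^{-1}$, so it suffices to treat $m\in\{-1,0\}$. For $m=-1$: using $[(T_aT_b)^6,T_b]=1$ and the braid identity $T_b^{-1}T_c^nT_b=T_cT_b^nT_c^{-1}$, one obtains $g_{-1,n}=T_b^n w\,T_b^{-n}\in\langle T_b,w\rangle$ (in particular $w=g_{-1,0}$). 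For $m=0$: one rewrites $g_{0,n}=(T_bT_a)^{-1}\big(T_c^{\,n+1}w\,T_c^{-(n+1)}\big)(T_bT_a)$ using $(T_aT_b)^6=T_cwT_c^{-1}$. The point is that $T_cwT_c^{-1}=(T_aT_b)^6$ and $T_c^{-1}wT_c=T_c^{-2}(T_aT_b)^6T_c^2$ both lie in $\langle T_a,T_b,T_c^2\rangle$, so an induction on $|k|$ (splitting $k=2j+r$ with $r\in\{-1,0,1\}$) gives $T_c^{\,k}w\,T_c^{-k}\in\langle T_a,T_b,T_c^2,w\rangle$ for every $k\in\Z$; hence $g_{0,n}\in\langle T_a,T_b,T_c^2,w\rangle$. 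Therefore $\stab_{\lmap_p(S_2)}(e)=\langle T_a,T_b,T_c^2,T_e,\iota,T_c^{-1}(T_aT_b)^6T_c\rangle$.

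The main obstacle is the observation in the $m=0$ step: although $T_c\notin\lmap_p(S_2)$ (only $T_c^2\in\lmap_p(S_2)$), the entire $\langle T_c\rangle$-orbit of $w$ nevertheless stays inside the candidate subgroup, precisely because both $T_cwT_c^{-1}$ and $T_c^{-1}wT_c$ are already words in $T_a,T_b,T_c^2$; this is what forces the infinite $n$-family to be absorbed by adjoining just $T_c^{-1}(T_aT_b)^6T_c$, and it is the same mechanism used implicitly in Corollary~\ref{cor:stab_lamod_klein_a}. The only other thing to verify is the routine simplification showing that a lift of the matrix generator $M^2$ from Lemma~\ref{lem:klein_stab_matrix_e} already lies in $\langle T_a,T_b,T_c^2,T_e\rangle$, so that no spurious generator enters the final list.
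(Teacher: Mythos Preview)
Your proposal is correct and follows essentially the same approach as the paper: feed Lemma~\ref{lem:stab_e_torelli} and Lemma~\ref{lem:klein_stab_matrix_e} into the short exact sequence~\eqref{ses:stab_lmod}, observe that lifts of $M^2$ and $(M')^2$ already lie in $\langle T_a,T_b,T_c^2,T_e\rangle$, reduce the Torelli family modulo $T_a$ and $T_c^2$ to two residues of $m$, and then use the braid identity $T_b^{-1}T_c^nT_b=T_cT_b^nT_c^{-1}$ together with $[(T_aT_b)^6,T_b]=1$ to collapse everything onto $w=T_c^{-1}(T_aT_b)^6T_c$. The paper compresses these steps into the three displayed equalities (and does not spell out the lifts of $M^2,(M')^2$ or the parity argument for $T_c^{k}wT_c^{-k}$), while you make them explicit; in particular your verification that $F^2=T_aT_bT_c^2T_b^{-1}T_a^{-1}T_b^{-2}T_e^{-2}$ and your even/odd split for $T_c^{k}wT_c^{-k}$ are exactly the computations the paper is suppressing.
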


\begin{proof}
By Lemmas~\ref{lem:stab_e_torelli} and~\ref{lem:klein_stab_matrix_e}, it follows that
\begin{align*}
\stab_{\lmap_p(S_2)}(e)
&=\langle \{T_a,T_b,T_c^2,T_e,\iota\}\cup \{T_c^iT_b^{-1}T_c^n(T_aT_b)^6T_c^{-n}T_bT_c^{-i}\mid 0\leq i\leq 1,n\in \Z\}\rangle\\
&=\langle \{T_a,T_b,T_c^2,T_e,\iota\}\cup \{T_c^{i+1}T_b^nT_c^{-1}(T_aT_b)^6T_cT_b^{-n}T_c^{-i-1}\mid 0\leq i\leq 1,n\in \Z\}\rangle\\
&=\langle T_a,T_b,T_c^2,T_e,\iota,T_c^{-1}(T_aT_b)^6T_c\rangle.
\end{align*}
\end{proof}

\begin{cor}
\label{cor:stab_lamod_klein_c}
We have
\begin{align*}
\stab_{\lmap_p(S_2)}(c)=
&\langle\{T_a,T_e,T_c^2,\iota\}\cup \{T_dT_eT_b^{-1}T_c^nT_bT_d^{-1}T_c(T_aT_b)^6T_c^{-1}T_dT_b^{-1}T_c^{-n}T_bT_e^{-1}T_d^{-1}\mid n\in \Z\}\\
&\cup \{ T_dT_e^2T_c^{-2}T_d^{-1}, T_bT_c^2(T_dT_e)^3, T_dT_b^{-1}T_c^2T_bT_d^{-1},T_bT_a^{-2}T_c^2T_b^{-1}T_dT_c^{-2}T_e^2T_d^{-1}\} \rangle.
\end{align*}
\end{cor}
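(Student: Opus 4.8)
\textbf{Proof proposal for Corollary~\ref{cor:stab_lamod_klein_c}.}
The plan is to run the last steps of the template already used for Corollaries~\ref{cor:stab_lamod_klein_a} and~\ref{cor:stab_lamod_klein_e}: feed the generating sets of Lemma~\ref{lem:stab_torelli_klien_c} and Lemma~\ref{lem:klein_stab_matrix_c} into the short exact sequence~(\ref{ses:stab_lmod}) with $\gamma=c$. Thus $\stab_{\lmap_p(S_2)}(c)$ is generated by (a)~the displayed generators of $\stab_{\I(S_2)}(c)$ from Lemma~\ref{lem:stab_torelli_klien_c}, together with (b)~a choice of $\Psi$-preimages in $\lmap_p(S_2)$ of the generators $\Psi(T_a),\Psi(T_e),\Psi(T_c^2),\iota,M^2(N')^2,M^2(T_dT_e)^3,\Psi(T_dT_cT_b^2T_c^{-1}T_d^{-1})$ of $\stab_{\Psi(\lmap_p(S_2))}(\pm[c])$ from Lemma~\ref{lem:klein_stab_matrix_c}. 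The two remaining tasks are to shrink (a) and to exhibit concrete lifts for (b).

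For (a): by Lemma~\ref{lem:stab_torelli_klien_c} the generators of $\stab_{\I(S_2)}(c)$ have the form $(T_a^{-1}T_e)^{m+1}g_n(T_aT_e^{-1})^{m+1}$, $m,n\in\Z$, where $g_n:=T_dT_eT_b^{-1}T_c^nT_bT_d^{-1}T_c(T_aT_b)^6T_c^{-1}T_dT_b^{-1}T_c^{-n}T_bT_e^{-1}T_d^{-1}$. Since $a$ and $e$ are disjoint we have $T_aT_e=T_eT_a$, hence $(T_aT_e^{-1})^{m+1}=(T_a^{-1}T_e)^{-(m+1)}$, so each such generator is the conjugate of $g_n$ by $(T_a^{-1}T_e)^{m+1}$. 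As $T_a,T_e\in\stab_{\lmap_p(S_2)}(c)$ (they lie in $\lmap_p(S_2)$ by Corollary~\ref{cor:klein_symp} and fix $c$), this conjugating factor already lies in $\stab_{\lmap_p(S_2)}(c)$; thus the $m$-index is redundant and the Torelli contribution collapses to $\{g_n\mid n\in\Z\}$.

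For (b): the generators $\Psi(T_a),\Psi(T_e),\Psi(T_c^2),\iota$ lift to $T_a,T_e,T_c^2,\iota$, and $\Psi(T_dT_cT_b^2T_c^{-1}T_d^{-1})$ lifts to $T_dT_cT_b^2T_c^{-1}T_d^{-1}$, which by the braid relation $T_cT_b^2T_c^{-1}=T_b^{-1}T_c^2T_b$ (valid since $i(b,c)=1$) equals $T_dT_b^{-1}T_c^2T_bT_d^{-1}$. It remains to lift $M^2(N')^2$ and $M^2(T_dT_e)^3$. Using the formulas $M=\Psi(T_aT_bT_a^{-1}T_cT_b^{-1}T_a^{-1}T_e^{-1})$, $M'=\Psi(T_eT_aT_c^{-1})$ and $N'=\Psi\big((T_eT_dT_e)(T_eT_aT_c^{-1})(T_eT_dT_e)^{-1}\big)$ recorded above, one lifts these matrices termwise to words in Dehn twists and simplifies them using the disjointness relations $i(e,a)=i(e,b)=i(e,c)=i(a,c)=i(a,d)=i(b,d)=0$ together with the braid relations along the chain $a$--$b$--$c$--$d$--$e$, inserting or deleting $T_a,T_e,T_c^{\pm2}$ freely since they belong to $\stab_{\lmap_p(S_2)}(c)$. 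The outcome is that $M^2(N')^2$ contributes $T_dT_e^2T_c^{-2}T_d^{-1}$ and $T_bT_a^{-2}T_c^2T_b^{-1}T_dT_c^{-2}T_e^2T_d^{-1}$, while $M^2(T_dT_e)^3$ contributes $T_bT_c^2(T_dT_e)^3$. Assembling (a) and (b) gives the stated generating set.

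The main obstacle is precisely the word-reduction in (b): one must propagate the conjugations hidden inside $M^2$ and $(N')^2$ through a fairly long chain of braid and commutation moves, and --- the delicate point --- be careful that the pieces one discards genuinely lie in $\langle T_a,T_e,T_c^2\rangle$ rather than merely in $\lmap_p(S_2)$, since $T_c$ itself (as opposed to $T_c^2$) is not liftable, so one is not permitted to conjugate by $T_c$.
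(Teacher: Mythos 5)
Your strategy coincides with the paper's: feed Lemmas~\ref{lem:stab_torelli_klien_c} and~\ref{lem:klein_stab_matrix_c} into the sequence~(\ref{ses:stab_lmod}), absorb the $m$-index of the Torelli generators by conjugating with $(T_a^{-1}T_e)^{m+1}\in\stab_{\lmap_p(S_2)}(c)$, and then reduce explicit lifts of the remaining matrix generators modulo $T_a,T_e,T_c^2$. Part (a) and the easy lifts (including $T_dT_cT_b^2T_c^{-1}T_d^{-1}=T_dT_b^{-1}T_c^2T_bT_d^{-1}$) are correct. The problem is that the decisive content of the corollary is exactly what you leave as an assertion: writing down elements of $\stab_{\lmap_p(S_2)}(c)$ that map to $M^2(N')^2$ and $M^2\Psi(T_dT_e)^3$, and verifying by concrete braid/commutation identities that, modulo $\langle T_a,T_e,T_c^2\rangle$, they become the displayed words. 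The paper's proof is essentially nothing but these identities (for instance $T_dT_e^2T_a^2T_c^{-2}T_d^{-1}=T_dT_e^2T_c^{-2}T_d^{-1}T_a^2$ and $T_aT_bT_a^{-2}T_c^2T_b^{-1}T_a^{-1}T_e^{-2}(T_dT_e)^3=T_aT_bT_c^2(T_dT_e)^3$, plus the analogous reduction of the lift of $M^2(N')^2$); stating only ``the outcome is \dots'' establishes neither that the displayed elements lie in $\stab_{\lmap_p(S_2)}(c)$ nor that they generate it.

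Moreover, the bookkeeping in (b) cannot be right as stated, and this matters for the containment direction. One matrix generator furnishes one lift, so $M^2(N')^2$ cannot by itself ``contribute'' both $T_dT_e^2T_c^{-2}T_d^{-1}$ and $T_bT_a^{-2}T_c^2T_b^{-1}T_dT_c^{-2}T_e^2T_d^{-1}$: even if a single lift of $M^2(N')^2$ factors as a product of these two up to elements of $\langle T_a,T_e,T_c^2\rangle$, that would only help with generation, not with showing that each factor separately lies in $\stab_{\lmap_p(S_2)}(c)$. And membership is not automatic: an element of $\lmap_p(S_2)$ whose symplectic image stabilizes $\pm[c]$ differs from a genuine element of $\stab_{\lmap_p(S_2)}(c)$ by an element of $\I(S_2)$ that need not stabilize $c$, so each new word must be accounted for individually. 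The paper does this implicitly by starting from four chosen elements beyond $T_a,T_e,T_c^2,\iota$: the lift $T_dT_cT_b^2T_c^{-1}T_d^{-1}$, the element $T_dT_e^2T_a^2T_c^{-2}T_d^{-1}$ (whose image is a $\Psi(T_e)$-conjugate of $(N')^2$, not $M^2(N')^2$), a lift of $M^2\Psi(T_dT_e)^3$, and a lift of $M^2(N')^2$, each of which reduces on its own to one of the four extra displayed generators. To close the gap you need either this four-element accounting together with the explicit word reductions, or a separate verification that each of the four displayed words is liftable and fixes the isotopy class of $c$.
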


\begin{proof}
It follows from Lemmas~\ref{lem:stab_torelli_klien_c} and \ref{lem:klein_stab_matrix_c} that
\begin{align*}
\stab_{\lmap_p(S_2)}(c)=\langle 
&\{T_a,T_e,T_c^2,\iota,T_dT_cT_b^2T_c^{-1}T_d^{-1},T_dT_e^2T_a^2T_c^{-2}T_d^{-1},T_aT_bT_a^{-2}T_c^2T_b^{-1}T_a^{-1}T_e^{-2}(T_dT_e)^3,\\
&T_aT_bT_a^{-2}T_c^2T_b^{-1}T_a^{-1}T_e^{-1}T_dT_e^2T_a^2T_c^{-2}T_d^{-1}T_e^{-1}\} \cup\\
&\{T_dT_eT_b^{-1}T_c^nT_bT_d^{-1}T_c(T_aT_b)^6T_c^{-1}T_dT_b^{-1}T_c^{-n}T_bT_e^{-1}T_d^{-1}\mid n\in \Z\} \rangle.
\end{align*}
We have $T_dT_e^2T_a^2T_c^{-2}T_d^{-1}=T_dT_e^2T_c^{-2}T_d^{-1}T_a^2$, $T_aT_bT_a^{-2}T_c^2T_b^{-1}T_a^{-1}T_e^{-2}(T_dT_e)^3=T_aT_bT_c^2(T_dT_e)^3$, $T_dT_cT_b^2T_c^{-1}T_d^{-1}=T_dT_b^{-1}T_c^2T_bT_d^{-1}$, and
$$T_aT_bT_a^{-2}T_c^2T_b^{-1}T_a^{-1}T_e^{-1}T_dT_e^2T_a^2T_c^{-2}T_d^{-1}T_e^{-1}=T_aT_e^{-1}T_bT_a^{-2}T_c^2T_b^{-1}T_dT_c^{-2}T_e^2T_d^{-1}T_aT_e^{-1}.$$
Hence,
\begin{align*}
\stab_{\lmap_p(S_2)}(c)=
&\langle\{T_a,T_e,T_c^2,\iota\}\cup \{T_dT_eT_b^{-1}T_c^nT_bT_d^{-1}T_c(T_aT_b)^6T_c^{-1}T_dT_b^{-1}T_c^{-n}T_bT_e^{-1}T_d^{-1}\mid n\in \Z\}\\
&\cup \{ T_dT_e^2T_c^{-2}T_d^{-1}, T_bT_c^2(T_dT_e)^3, T_dT_b^{-1}T_c^2T_bT_d^{-1},T_bT_a^{-2}T_c^2T_b^{-1}T_dT_c^{-2}T_e^2T_d^{-1}\} \rangle.
\end{align*}
\end{proof}

\begin{theorem}
\label{thm:genset_klein_lmod}
For the cover $p$ described as above, we have $\lmap_p(S_2)=\langle T_a,T_b,T_c^2,T_d,T_e\rangle$.
\end{theorem}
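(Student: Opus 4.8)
The inclusion $\langle T_a,T_b,T_c^2,T_d,T_e\rangle\subseteq\lmap_p(S_2)$ is immediate from Corollary~\ref{cor:klein_symp}, so the plan is to prove the reverse inclusion by running Steps~7--9 of Algorithm~\ref{algo}, following the template of the proof of Theorem~\ref{thm:genset_lmod}. Write $G:=\langle T_a,T_b,T_c^2,T_d,T_e\rangle$. By Lemma~\ref{lem:quotient_graph_klein} the two non-loop edges of $\overline{\N_2(S_2)}$ form a spanning tree whose lift is the path $b$--$c$--$d$ in $\N(S_2)$, and outside this tree there are only the four loops, with the representative edge-lifts already chosen in the discussion preceding the theorem: $\{a,b\}$ at $b$, $\{e,d\}$ at $d$, and $\{c,T_b(c)\}$, $\{c,T_d(c)\}$ at $c$. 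The elements carrying these as required in Theorem~\ref{thm:genset_gpaction_graph}(i) are $T_b^{-1}T_a^{-1}$, $T_d^{-1}T_e^{-1}$, $T_b$, and $T_d$, all of which lie in $G$. Hence Theorem~\ref{thm:genset_gpaction_graph} yields
\[
\lmap_p(S_2)=\big\langle\,\stab_{\lmap_p(S_2)}(b)\cup\stab_{\lmap_p(S_2)}(c)\cup\stab_{\lmap_p(S_2)}(d)\cup\{T_b^{-1}T_a^{-1},T_d^{-1}T_e^{-1},T_b,T_d\}\,\big\rangle.
\]
Since $\stab_{\lmap_p(S_2)}(b)=T_aT_b\,\stab_{\lmap_p(S_2)}(a)\,(T_aT_b)^{-1}$ and $\stab_{\lmap_p(S_2)}(d)=T_eT_d\,\stab_{\lmap_p(S_2)}(e)\,(T_eT_d)^{-1}$ with $T_aT_b,T_eT_d\in G$, it remains to show that $\stab_{\lmap_p(S_2)}(a)$, $\stab_{\lmap_p(S_2)}(e)$, and $\stab_{\lmap_p(S_2)}(c)$ are contained in $G$.

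Substituting the generating sets from Corollaries~\ref{cor:stab_lamod_klein_a},~\ref{cor:stab_lamod_klein_e}, and~\ref{cor:stab_lamod_klein_c}, one checks that every listed generator is visibly a word in $T_a,T_b,T_c^2,T_d,T_e$ (e.g.\ $T_dT_e^2T_c^{-2}T_d^{-1}$, $T_bT_c^2(T_dT_e)^3$, $T_dT_b^{-1}T_c^2T_bT_d^{-1}$, $T_bT_a^{-2}T_c^2T_b^{-1}T_dT_c^{-2}T_e^2T_d^{-1}$) except for three items: (i) the hyperelliptic involution $\iota$; (ii) the elements $T_c^{-1}(T_aT_b)^6T_c$ and $T_c^{-1}(T_eT_d)^6T_c$; and (iii) the family $w_n:=T_dT_eT_b^{-1}T_c^nT_bT_d^{-1}T_c(T_aT_b)^6T_c^{-1}T_dT_b^{-1}T_c^{-n}T_bT_e^{-1}T_d^{-1}$, $n\in\Z$, coming from $\stab_{\lmap_p(S_2)}(c)$. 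So the theorem reduces to placing (i), (ii), (iii) inside $G$.

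I would carry this out with the standard braid and disjointness relations (in particular $T_aT_c=T_cT_a$, $T_cT_e=T_eT_c$, $T_bT_d=T_dT_b$, $T_c^{-1}T_bT_c=T_bT_cT_b^{-1}$, $T_d^{-1}T_c^2T_d=T_cT_d^2T_c^{-1}$), together with three further identities: the chain relation $\iota=T_eT_dT_cT_bT_a^2T_bT_cT_dT_e$ from the proof of Corollary~\ref{cor:genset_lmod2}; the relation $(T_\alpha T_\beta)^6=(T_\beta^2T_\alpha)^4$ valid whenever $i(\alpha,\beta)=1$ (the mirror of $(T_bT_c)^6=(T_b^2T_c)^4$ used there, obtained by swapping the two curves and using that $(T_\alpha T_\beta)^6$ is central in $\langle T_\alpha,T_\beta\rangle$); and the $B_3$-identity $(T_\alpha T_\beta)^3=(T_\alpha^2T_\beta)^2$, equivalently $T_\alpha T_\beta^2T_\alpha=T_\beta^{-1}T_\alpha^2T_\beta T_\alpha^2$, for $i(\alpha,\beta)=1$. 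For (ii): using $T_c^{-1}T_aT_c=T_a$ and $T_c^{-1}T_bT_c=T_bT_cT_b^{-1}$ one gets $T_c^{-1}(T_aT_b)^6T_c=(T_aT_bT_cT_b^{-1})^6=(T_bT_c^2T_b^{-1}T_a)^4\in G$, and $T_c^{-1}(T_eT_d)^6T_c$ is handled by the mirror computation. For (i): since $\iota$ is central, conjugating the chain relation gives $\iota=(T_aT_bT_cT_dT_e)(T_eT_dT_cT_bT_a)=T_aT_bT_cT_dT_e^2T_dT_cT_bT_a$, so $\iota\in G$ once $T_cT_dT_e^2T_dT_c\in G$; applying $T_dT_e^2T_d=T_e^{-1}T_d^2T_eT_d^2$ and then $T_cT_d^2T_c=T_d^{-1}T_c^2T_dT_c^2$, together with $T_cT_e=T_eT_c$, rewrites $T_cT_dT_e^2T_dT_c$ explicitly as $T_e^{-1}(T_d^{-1}T_c^2T_d)T_e(T_d^{-1}T_c^2T_dT_c^2)\in G$. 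For (iii): each $w_n$ is a conjugate of $T_c(T_aT_b)^6T_c^{-1}=(T_b^{-1}T_c^2T_bT_a)^4\in G$ by an element that involves $T_c^n$, and, exactly as in the reduction of the infinite family $\s_3$ in the proof of Theorem~\ref{thm:genset_lmod}, one uses $T_b^{-1}T_c^2T_b\in G$ to trim the exponent $n$ to finitely many residues, and then disposes of those finitely many members by the same braid manipulations as in (i).

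The conceptual content of the argument lies entirely in the material already developed---Theorem~\ref{thm:genset_gpaction_graph}, Lemma~\ref{lem:quotient_graph_klein}, and the stabilizer computations of Corollaries~\ref{cor:stab_lamod_klein_a}--\ref{cor:stab_lamod_klein_c}---and the only genuine work in the theorem itself is the relation-chasing above. The recurring obstacle is that $T_c\notin\lmap_p(S_2)$ although $T_c^2\in\lmap_p(S_2)$, so each isolated occurrence of $T_c^{\pm1}$ in a candidate generator must first be paired off with another $T_c^{\mp1}$ via a braid relation before the word can be recognized as an element of $G$; carrying this out for $\iota$, and in particular verifying that the entire family $\{w_n\}_{n\in\Z}$ collapses into $G$, is the least mechanical part and the place where I expect the computation to be most delicate.
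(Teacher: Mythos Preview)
Your proposal is correct and follows essentially the same route as the paper: apply Theorem~\ref{thm:genset_gpaction_graph} to the spanning tree $b$--$c$--$d$ with the four loop-elements $T_aT_b,T_dT_e,T_b,T_d$, pull the stabilizers back to $a,e,c$ via the conjugators $T_aT_b,T_eT_d\in G$, and then reduce the remaining generators from Corollaries~\ref{cor:stab_lamod_klein_a}--\ref{cor:stab_lamod_klein_c} to words in $T_a,T_b,T_c^2,T_d,T_e$ using braid and chain relations.

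Two small comparisons with the paper's argument are worth noting. First, your treatment of $\iota$ is more laborious than necessary: the paper simply invokes $\iota=(T_aT_b)^3(T_dT_e)^{-3}$ (already used in Corollary~\ref{cor:genset_lmod3}), which is manifestly in $G$ and avoids your chain-relation detour through $T_cT_dT_e^2T_dT_c$. Second, your reduction of the infinite family $\{w_n\}$ is exactly what the paper does implicitly when it passes to the single element $T_c^{-1}T_bT_d^{-1}T_c(T_aT_b)^6T_c^{-1}T_dT_b^{-1}T_c$: after stripping off $T_d,T_e,T_b\in G$, the conjugation by $T_c^n$ reduces mod $2$ since $T_c^2\in G$, so only $n\in\{0,-1\}$ survive; the case $n=0$ is your $(T_aT_b^{-1}T_c^2T_b)^4$ computation, and $n=-1$ is the element the paper then rewrites as $(T_bT_aT_b^{-1})(T_c^2T_d)^4(T_bT_aT_b^{-1})^{-1}\in G$. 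With that one explicit computation in hand, your sketch for (iii) is complete rather than merely plausible.
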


\begin{proof}
For representative edges of loops of $\overline{\N_2(S_2)}$, we choose elements of $\lmap_p(S_2)$ taking one vertex of edge to the adjacent one. For the edge $\{a,b\}$ and $T_aT_b\in \lmap_p(S_2)$, we have $T_aT_b(a)=b$. For the edge $\{d,e\}$ and $T_dT_e\in \lmap_p(S_2)$, we have $T_dT_e(d)=e$. For edges $\{c,T_b(c)\}$ and $\{c,T_d(c)\}$, $T_b,T_d\in \lmap_p(S_2)$ does the job. Hence, by Theorem~\ref{thm:genset_gpaction_graph}, we have
\begin{align*}
\lmap_p(S_2)=\langle \stab_{\lmap_p(S_2)}(b)\cup\stab_{\lmap_p(S_2)}(c)\cup\stab_{\lmap_p(S_2)}(d)\cup \{T_aT_b,T_dT_e,T_b,T_d\} \rangle.
\end{align*}
Since $(T_aT_b)^6=(T_dT_e)^6$, by Corollaries~\ref{cor:stab_lamod_klein_a}-\ref{cor:stab_lamod_klein_c}, we have
\begin{align*}
\lmap_p(S_2)=\langle \{T_a,T_b,T_c^2,T_d,T_e,\iota, T_c^{-1}(T_aT_b)^6T_c,T_c^{-1}T_bT_d^{-1}T_c(T_aT_b)^6T_c^{-1}T_dT_b^{-1}T_c\} \rangle.
\end{align*}
We have
\begin{align*}
&T_c^{-1}T_bT_d^{-1}T_c(T_aT_b)^6(T_c^{-1}T_bT_d^{-1}T_c)^{-1}\\
&=(T_bT_a)T_a^{-1}T_b^{-1}T_c^{-1}T_d^{-1}T_bT_c(T_aT_b)^6((T_bT_a)T_a^{-1}T_b^{-1}T_c^{-1}T_d^{-1}T_bT_c)^{-1}\\
&=T_bT_aT_cT_d(T_bT_c)^6(T_bT_aT_cT_d)^{-1}\\
&=(T_bT_aT_b^{-1})T_bT_cT_d(T_bT_c)^6 ((T_bT_aT_b^{-1})T_bT_cT_d)^{-1}\\
&=(T_bT_aT_b^{-1})(T_cT_d)^6((T_bT_aT_b^{-1}))^{-1}\\
&=(T_bT_aT_b^{-1})(T_c^2T_d)^4((T_bT_aT_b^{-1}))^{-1}.
\end{align*} 
Furthermore,
\begin{align*}
T_c^{-1}(T_aT_b)^6T_c=T_c^{-1}(T_aT_b^2)^4T_c=(T_aT_c^{-1}T_b^2T_c)^4=(T_aT_bT_c^2T_b^{-1})^4.
\end{align*}
Finally, as seen before, we have $\iota=(T_aT_b)^3(T_dT_e)^{-3}$. Hence, $\lmap_p(S_2)=\langle T_a,T_b,T_c^2,T_d,T_e\rangle$.
\end{proof}

\section*{Acknowledgments}
The authors would like to thank Prof.\,Martin Bridson for providing some of the key ideas used in Subsection 2.1. The first author is supported by the NBHM Postdoctoral Fellowship via grant number 0204/1/2023/R \&\,D-II/1792, the second author was partly supported by the Prime Minister Research Fellowship (PMRF) scheme instituted by the Ministry of Education, India. The third author is supported by the SERB MATRICS grant of the Government of India. 
\bibliographystyle{abbrv}
\bibliography{liftable_s2}
\end{document}